\definecolor{labelkey}{rgb}{0.0, 0.8, 0.3}
\numberwithin{equation}{section}
\declaretheorem[name=Corollary]{cor}
\declaretheorem[name=Definition, style=definition]{defn}
\declaretheorem[name=Lemma]{lem}
\declaretheorem[name=Proposition]{prop}
\declaretheorem[name=Remark, style=remark]{rmk}
\declaretheorem[name=Theorem]{thm}
\begin{document}

\begin{frontmatter}

	\title{Exponential ergodicity of mirror-Langevin diffusions}
	\runtitle{Mirror-Langevin diffusions}
	\author{Sinho Chewi \hfill schewi@mit.edu \\
	Thibaut Le Gouic \hfill tlegouic@mit.edu \\
	Chen Lu \hfill chenl819@mit.edu \\
	Tyler Maunu \hfill maunut@mit.edu \\
	Philippe Rigollet \hfill rigollet@mit.edu \\
	Austin J.\ Stromme \hfill astromme@mit.edu}



	\address{{Department of Mathematics} \\
		{Massachusetts Institute of Technology}\\
		{77 Massachusetts Avenue,}\\
		{Cambridge, MA 02139-4307, USA}
	}
	


\runauthor{Chewi et al.}

\begin{abstract}
    Motivated by the problem of sampling from ill-conditioned log-concave distributions, we give a clean non-asymptotic convergence analysis of mirror-Langevin diffusions as introduced in~\cite{zhang2020mirror}. As a special case of this framework, we propose a class of diffusions called Newton-Langevin diffusions and prove that they converge to stationarity exponentially fast with a rate which not only is dimension-free, but also has no dependence on the target distribution. We give an application of this result to the problem of sampling from the uniform distribution on a convex body using a strategy inspired by interior-point methods. Our general approach follows the recent trend of linking sampling and optimization and highlights the role of the chi-squared divergence.
    In particular, it yields new results on the convergence of the vanilla Langevin diffusion in Wasserstein distance.
\end{abstract}



\end{frontmatter}

\section{Introduction}

Sampling from a target distribution is a central task in statistics and machine learning with applications ranging from Bayesian inference~\cite{RobCas04, durmus2019high} to deep generative models~\cite{Gans14}. Owing to a firm mathematical grounding in the theory of Markov processes~\cite{MeyTwe09}, as well as its great versatility, Markov Chain Monte Carlo (MCMC) has emerged as a fundamental sampling paradigm. While  traditional theoretical analyses are anchored in the asymptotic framework of ergodic theory, this work focuses on  finite-time results that better witness the practical performance of MCMC for high-dimensional problems arising in machine learning.


This perspective parallels an earlier phenomenon in the much better understood field of {optimization} where convexity has played a preponderant role for both theoretical and methodological advances~\cite{nesterov2004optimization, bubeck2015convex}. In fact, sampling and optimization share deep conceptual connections that have contributed to a renewed understanding of the theoretical properties of sampling algorithms~\cite{Dal17, Wib18} building on the seminal work of Jordan, Kinderlehrer and Otto~\cite{jordan1998variational}.

We consider the following canonical sampling problem. Let $\pi$ be a log-concave probability measure over $\R^d$ so that $\pi$ has density equal to $e^{-V}$, where the potential $V:\R^d \to \R$ is convex. Throughout this paper, we also assume that $V$ is twice continuously differentiable for convenience, though many of our results hold under weaker conditions.

Most MCMC algorithms designed for this problem are based on the \emph{Langevin diffusion} (LD), that is the solution ${(X_t)}_{t\ge 0}$ to the stochastic differential equation (SDE)
\begin{align}\label{eq:langevin_sde}
    \D X_t
    &= - \nabla V(X_t) \, \D t + \sqrt 2 \, \D B_t, \tag{$\msf{LD}$}
\end{align}
with ${(B_t)}_{t\ge 0}$ a standard Brownian motion in $\R^d$. Indeed, 
$\pi$ is the unique invariant distribution of~\eqref{eq:langevin_sde} and 
suitable discretizations 
result in algorithms that can be implemented when $V$ is known only up to an additive constant, which is crucial for applications in Bayesian  statistics and machine learning. 

A first connection between sampling from log-concave measures and optimizing convex functions is easily seen from~\eqref{eq:langevin_sde}: omitting the Brownian motion term yields the gradient flow $\dot x_t = - \nabla V(x_t)$, which results in the celebrated gradient descent algorithm when discretized in time~\cite{Dal17, dalalyan2017theoretical}. There is, however, a much deeper connection involving the distribution of $X_t$ rather than $X_t$ itself, and this latter connection has been substantially more fruitful: the marginal distribution of a Langevin diffusion process ${(X_t)}_{t\ge 0}$ evolves according to a \emph{gradient flow}, over the Wasserstein space of probability measures, that minimizes the Kullback-Leibler (KL) divergence $\KL(\cdot \mmid \pi)$~\cite{jordan1998variational,ambrosio2008gradient, villani2009ot}. This point of view has led not only to a better theoretical understanding of the Langevin diffusion~\cite{Ber18, cheng2018langevin, Wib18, durmus2019lmcconvex, vempala2019langevin} but it has also
inspired new sampling algorithms based on classical optimization algorithms, such as proximal/splitting methods~\cite{Ber18, Wib18, Wib19prox,salim2020proximal}, mirror descent~\cite{hsieh2018mirrored, zhang2020mirror}, Nesterov's accelerated gradient descent~\cite{cheng2017underdamped, ma2019there, DalRiou2020}, and Newton methods~\cite{martin2012newtonmcmc, simsekli2016quasinewtonlangevin, wang2020informationnewton}.

\begin{wrapfigure}[22]{r}{.35\textwidth}
    \vspace{-0.2cm}
    \begin{center}
    \includegraphics[width = 0.29\textwidth]{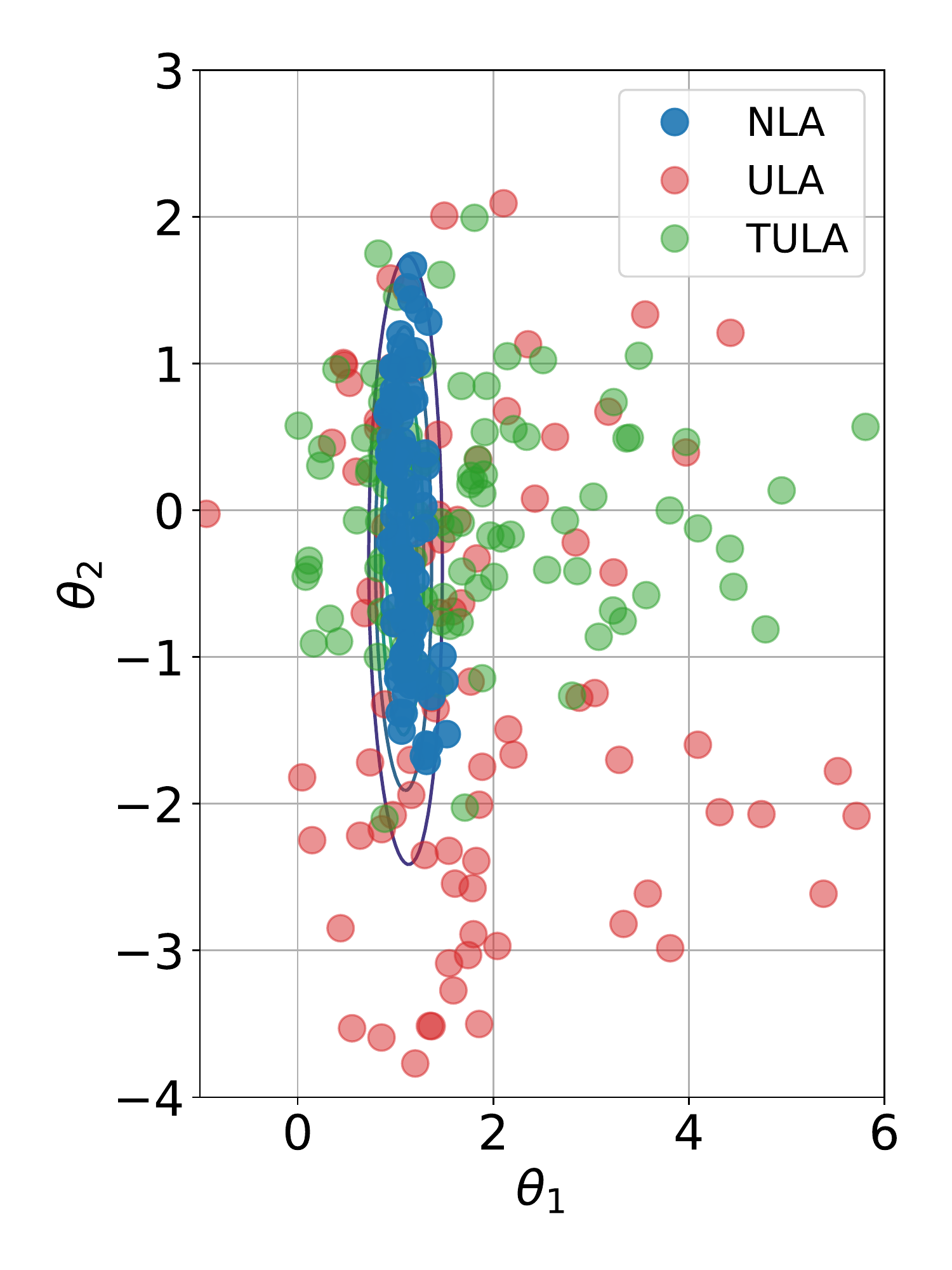}
    \end{center}
    \vspace{-.6cm}
    \caption{Samples from the posterior distribution of a 2D Bayesian logistic regression model using the Newton-Langevin Algorithm \eqref{eq:NLA}, the Unadjusted Langevin Algorithm (ULA), and the Tamed Unadjusted Langevin Algorithm (TULA)~\cite{brosse2019tamed}. For details, see Section~\ref{subsec:lrdetails}.}
    \label{fig:logistic}
\end{wrapfigure}

\medskip
\noindent{\bf Our contributions.}
This paper further exploits the optimization perspective on sampling by establishing a theoretical framework for a large class of stochastic processes called \emph{mirror-Langevin diffusions}~\eqref{eq:mld} introduced in~\cite{zhang2020mirror}. These processes correspond to alternative optimization schemes that minimize the KL divergence over the Wasserstein space by changing its geometry. They show better dependence in key parameters such as the condition number and the dimension.

Our theoretical analysis is streamlined by a technical device which is unexpected at first glance, yet proves to be elegant and effective: we track the progress of these schemes not by measuring the objective function itself, the KL divergence, but rather by measuring the chi-squared divergence to the target distribution $\pi$ as a surrogate. This perspective highlights the central role of mirror Poincar\'e inequalities~\eqref{eq:mp} as sufficient conditions for exponentially fast convergence of the mirror-Langevin diffusion to stationarity in chi-squared divergence, which readily yields convergence in other well-known information divergences, such as the Kullback-Leibler divergence, the Hellinger distance, and the total variation distance~\cite[\S 2.4]{tsybakov2009nonparametric}.

We also specialize our results to the case when the mirror map equals the potential $V$. This can be understood as the sampling analogue of Newton's method, and we therefore call it the \emph{Newton-Langevin diffusion}~\eqref{eq:newton}. In this case, the mirror Poincar\'e inequality translates into the Brascamp-Lieb inequality which automatically holds when $V$ is twice-differentiable and \emph{strictly} convex. In turn, it readily implies exponential convergence of the Newton-Langevin diffusion (Corollary~\ref{cor:conv_newton}) and can be used for approximate sampling even when the second derivative of $V$ vanishes (Corollary~\ref{cor:degenerate_sampling}). Strikingly, the rate of convergence \emph{has no dependence on $\pi$ or on the dimension $d$} and, in particular, is robust to cases where $\nabla^2 V$ is arbitrarily close to zero. This \emph{scale-invariant} convergence parallels that of Newton's method in convex optimization and is the first result of this kind for sampling.

This invariance property is useful for approximately sampling from the uniform distribution over a
    convex body $\eu C$, which has been well-studied in the computer science literature \cite{FriKanPol94, kls1995, LovVem07}. By taking the target distribution $\pi \propto \exp(-\beta V)$, where $V$ is any strictly convex \emph{barrier function}, and $\beta$, the inverse temperature parameter, is taken to be small (depending on the target accuracy), we can use the Newton-Langevin diffusion, much in the spirit of interior point methods (as promoted by~\cite{LadTatVem20}), to output a sample which is approximately uniformly distributed on $\eu C$; see Corollary~\ref{cor:unif_sampling}.



Throughout this paper, we work exclusively in the setting of continuous-time diffusions such as~\eqref{eq:langevin_sde}.
We refer to the works~\cite{DurMou15, Dal17, dalalyan2017theoretical, raginsky2017sgld, cheng2018langevin, Wib18, DalKar17, durmus2019lmcconvex, dalalyan2019bounding, mou2019improved, vempala2019langevin} for discretization error bounds, and leave this question open for future works.


\medskip
\noindent{\bf Related work.} The discretized Langevin algorithm, and the Metropolis-Hastings adjusted version, have been well-studied when used to sample from strongly log-concave distributions, or distributions satisfying a log-Sobolev inequality \cite{dalalyan2017theoretical, durmus2017nonasymptoticlangevin, cheng2018langevin, cheng2019quantitative, DalKar17, durmus2019high, dwivedi2019log, mou2019improved, vempala2019langevin}. 
Moreover, various ways of adapting Langevin diffusion to sample from bounded domains have been proposed \cite{bubeck2018sampling, hsieh2018mirrored, zhang2020mirror}; in particular, \cite{zhang2020mirror} studied the discretized mirror-Langevin diffusion.
Finally, we note that while our analysis and methods are inspired by the optimization perspective on sampling, it connects to a more traditional analysis based on coupling stochastic processes. Quantitative analysis of the continuous Langevin diffusion process associated to SDE~\eqref{eq:langevin_sde} has been performed with Poincar\'e and log-Sobolev inequalities \cite{bolley2012convergence, bakry2014markov,vempala2019langevin}, and with couplings of stochastic processes \cite{chen1989coupling, eberle2016reflection}.

\medskip 

\noindent{\bf Notation.} The Euclidean norm over $\R^d$ is denoted by $\|\cdot\|$. Throughout, we simply write $\int \!g$ to denote the integral with respect to the Lebesgue measure: $\int \!g(x) \,\D x$. When the integral is with respect to a different measure $\mu$, we explicitly write $\int \!g \, \D \mu$. The expectation and variance of $g(X)$ when $X \sim \mu$ are respectively denoted $\E_\mu g=\int\! g \, \D \mu$ and $\var_\mu g:= \int \! {(g-\E_\mu g)}^2 \, \D \mu$.  
When clear from context, we sometimes abuse notation by identifying a measure $\mu$ with its Lebesgue density.

\section{Mirror-Langevin diffusions}

Before introducing mirror-Langevin diffusions, our main objects of interest, we provide some intuition for their construction by drawing a parallel with convex optimization.

\subsection{Gradient flows, mirror flows, and Newton's method}
\label{sec:gradientflows}
We briefly recall some background on gradient flows and mirror flows; we refer readers to the monograph~\cite{bubeck2015convex} for the convergence analysis of the corresponding discrete-time algorithms.

Suppose we want to minimize a differentiable function $f : \R^d\to\R$. The \emph{gradient flow} of $f$ is the curve $(x_t)_{t \ge 0}$ on $\R^d$ solving $\dot x_t = -\nabla f(x_t)$. A suitable time discretization of this curve yields the well-known  \emph{gradient descent} (GD).


Although the gradient flow typically works well for optimization over Euclidean spaces, it may suffer from poor dimension scaling  in more general cases such as Banach space optimization; a notable example is the case when $f$ is defined over the probability simplex equipped with the $\ell_1$ norm. This observation led Nemirovskii and Yudin~\cite{nemirovskii1979problemcomplexity} to introduce the \emph{mirror flow}, which is defined as follows. Let $\phi : \R^d\to\R\cup\{\infty\}$ be a \emph{mirror map}, that is a strictly convex twice continuously differentiable function of \emph{Legendre type}\footnote{This ensures that $\nabla \phi$ is invertible, c.f.~\cite[\S 26]{rockafellar1997convex}.}.
The mirror flow $(x_t)_{t \ge 0}$ satisfies $\partial_t \nabla \phi(x_t) = -\nabla f(x_t)$, or equivalently, $\dot x_t = - {[\nabla^2 \phi(x_t)]}^{-1} \nabla f(x_t)$. The corresponding discrete-time algorithms, called \emph{mirror descent}  (MD) algorithms, have been successfully employed in varied tasks of machine learning~\cite{bubeck2015convex} and online optimization~\cite{BubCes12} where the entropic mirror map plays an important role. In this work, we are primarily concerned with the following choices for the mirror map:
%

\begin{enumerate}
    \item When $\phi = \|\cdot\|^2/2$, then the mirror flow reduces to the gradient flow.
    \item Taking $\phi = f$ and the discretization $x_{k+1} = x_k - h_k\, {[\nabla^2 f(x_k)]}^{-1} \nabla f(x_k)$  yields another popular optimization algorithm known as (damped) \emph{Newton's method}.
    Newton's method has the important property of being invariant under affine transformations of the problem, and its local convergence is known to be much faster than that of GD; see \cite[\S 5.3]{bubeck2015convex}. 
\end{enumerate}

\subsection{Mirror-Langevin diffusions}

We now introduce the \emph{mirror-Langevin diffusion} (MLD) of~\cite{zhang2020mirror}.
Just as \ref{eq:langevin_sde} corresponds to the gradient flow, the MLD is the sampling analogue of the mirror flow.
To describe it, let $\phi : \R^d \to \R$ be a mirror map as in the previous section.
Then, the mirror-Langevin diffusion satisfies the SDE
\begin{align}\label{eq:mld} \tag{$\msf{MLD}$}
    X_t = \nabla \phi^\star(Y_t),\qquad\qquad\D Y_t
    = -\nabla V(X_t) \, \D t + \sqrt 2 \, {[\nabla^2 \phi(X_t)]}^{1/2} \, \D B_t\,,
\end{align}
where $\phi^\star$ denotes the convex conjugate of $\phi$~\cite[\S 3.3]{borwein2006convex}.
In particular, if we choose the mirror map $\phi$ to equal the potential $V$, then we arrive at a sampling analogue of \emph{Newton's method}, which we call the \emph{Newton-Langevin diffusion} (NLD),
\begin{align}\label{eq:newton} \tag{$\msf{NLD}$}
    X_t = \nabla V^\star(Y_t),\qquad\qquad\D Y_t
    = -\nabla V(X_t) \, \D t + \sqrt 2 \, {[\nabla^2 V(X_t)]}^{1/2} \, \D B_t.
\end{align}
From our intuition gained from optimization, we expect that \ref{eq:newton} has special properties, such as affine invariance and faster convergence. We validate this intuition in Corollary~\ref{cor:conv_newton} below by showing that, provided $\pi$ is strictly log-concave, the \ref{eq:newton} converges to stationarity exponentially fast, with no dependence on $\pi$. This should be contrasted with the vanilla Langevin diffusion~\eqref{eq:langevin_sde}, for which the convergence rate depends on the Poincar\'e constant of $\pi$, as we discuss in the next section.


We end this section by comparing \ref{eq:mld} and \ref{eq:newton} with similar sampling algorithms proposed in the literature inspired by mirror descent and Newton's method.

\medskip
\noindent{\textit{Mirrored Langevin dynamics}}. A variant of \ref{eq:mld}, called {``mirrored Langevin dynamics"}, was introduced in~\cite{hsieh2018mirrored}. The mirrored Langevin dynamics is motivated by constrained sampling and corresponds to the vanilla Langevin algorithm applied to the new target measure ${(\nabla \phi)}_\# \pi$.
In contrast, \ref{eq:mld} can be understood as a Riemannian diffusion w.r.t.\ the Riemannian metric induced by the mirror map $\phi$. Thus, the motivations and properties of the two algorithms are different, and we refer to~\cite{zhang2020mirror} for further comparison of the two algorithms.

\medskip

\noindent{\textit{Quasi-Newton diffusion}}. The paper~\cite{simsekli2016quasinewtonlangevin} proposes a quasi-Newton sampling algorithm, based on L-BFGS, which is partly motivated by the desire to avoid computation of the third derivative $\nabla^3 V$ while implementing the Newton-Langevin diffusion. We remark, however, that the form of \ref{eq:newton} employed above, which treats $V$ as a mirror map, does not in fact require the computation of $\nabla^3 V$, and thus can be implemented practically; see Section~\ref{sec:sim}. Moreover, since we analyze the full \ref{eq:newton}, rather than a quasi-Newton implementation, we are able to give a clean convergence result.

\medskip

\noindent{\textit{Information Newton's flow}}. Inspired by the perspective of~\cite{jordan1998variational}, which views the Langevin diffusion as a gradient flow in the Wasserstein space of probability measures, the paper~\cite{wang2020informationnewton} proposes an approach termed {``information Newton's flow"} that applies Newton's method directly on the space of probability measures equipped with either the Fisher-Rao or the Wasserstein metric. However, unlike~\ref{eq:langevin_sde} and~\ref{eq:newton} that both operate at the level of particles, 
information Newton's flow faces significant challenges at the level of both implementation and analysis.

\section{Convergence analysis}

\subsection{Convergence of gradient flows and mirror flows}\label{scn:conv_grad_flows}

We provide a brief reminder about the convergence analysis of gradient flows and mirror flows defined in Section~\ref{sec:gradientflows} to provide intuition for the next section. Throughout, let $f$ be a differentiable function with minimizer $x^*$.

Consider first the gradient flow for $f$: $\dot{x_t}=-\nabla f(x_t)$. We get $\partial_t [f(x_t) - f(x^*)] = - \|\nabla f(x_t)\|^2$ from a straightforward computation. From this identity, it is natural to assume a  \emph{Polyak-{\L}ojasiewicz {\rm (PL)} inequality}, which is well-known in the optimization literature~\cite{karimi2016linear} and can be employed even when $f$ is not convex~\cite{CheMauRig20}. Indeed, if there exists a constant $C_{\msf{PL}} > 0$ with
\begin{align}\label{eq:pl}\tag{$\msf{PL}$}
 f(x) - f(x^*) \le     \frac{C_{\msf{PL}}}{2} \, \norm{\nabla f(x)}^2  \qquad \forall\, x \in \R^d\,,
\end{align}
then $\partial_t [f(x_t) - f(x^*)] \le -  \frac{2}{C_{\msf{PL}}} \, [f(x_t) - f(x^*)]$. Together with  Gr\"onwall's inequality, it readily yields exponentially fast convergence in objective value: $f(x_t) \le f(x_0) \, \e^{-2t/C_{\msf{PL}}}$.

A similar analysis may be carried out for the mirror flow. Fix a mirror map $\phi$ and consider the mirror flow: $\dot x_t = - {[\nabla^2 \phi(x_t)]}^{-1} \nabla f(x_t)$. It holds $\partial_t [f(x_t) - f(x^*)] = - \langle \nabla f(x_t), {[\nabla^2 \phi(x_t)]}^{-1} \nabla f(x_t) \rangle$. Therefore, the analogue of~\eqref{eq:pl} which guarantees exponential decay in the objective value is the following inequality, which we call a \emph{mirror PL inequality}:
\begin{align}\label{eq:mpl} \tag{$\msf{MPL}$}
    f(x) - f(x^*) \le  \frac{C_{\msf{MPL}}}{2} \, \langle \nabla f(x), {[\nabla^2 \phi(x)]}^{-1} \nabla f(x) \rangle  \qquad \forall x \in \R^d.
\end{align}
Next, we describe analogues of~\eqref{eq:pl} and~\eqref{eq:mpl} that guarantee convergence of \ref{eq:langevin_sde} and~\ref{eq:mld}.

\subsection{Convergence of mirror-Langevin diffusions}
\label{sec:mld}
The above analysis employs the objective function $f$ to measure the progress of both the gradient and mirror flows. While this is the most natural choice, our approach below crucially relies on measuring progress via a \emph{different functional} $F$.
What should we use as $F$? To answer this question, we first consider the simpler case of the vanilla Langevin diffusion~\eqref{eq:langevin_sde}, which is a special case of~\ref{eq:mld} when the mirror map is $\phi = \norm\cdot^2/2$. We keep this discussion informal and postpone rigorous arguments to Appendix~\ref{appendix:markov_semigroup}.

Since the work of~\cite{jordan1998variational}, it has been known that the marginal distribution $\mu_t$ at time $t\ge 0$ of \ref{eq:langevin_sde} evolves according to the \emph{gradient flow} of the KL divergence $D_{\rm KL}(\cdot \mmid \pi)$ with respect to the $2$-Wasserstein distance $W_2$; we refer readers to~\cite{santambrogio2017euclidean} for an overview of this work, and to~\cite{ambrosio2008gradient, villani2009ot} for comprehensive treatments. Therefore, the most natural choice for $F$ is, as in Section~\ref{scn:conv_grad_flows}, the objective function $D_{\rm KL}(\cdot\mmid \pi)$ itself. Following this approach, one can compute~\cite[\S 9.1.5]{villani2003topics}
\begin{align*}
\partial_t D_{\rm KL}(\mu_t \mmid \pi) = - \int  \bigl\lVert \nabla\ln \frac{\D \mu_t}{\D \pi} \bigr\rVert^2 \, \D \mu_t = -4\int \bigl\lVert \nabla \sqrt{\frac{\D \mu_t}{\D \pi}} \bigr\rVert^2 \, \D \pi.
\end{align*}
In this setup, the role of the PL inequality~\eqref{eq:pl} is played by a \emph{log-Sobolev inequality} of the form
\begin{align}\label{eq:lsi}\tag{$\msf{LSI}$}
    \ent_\pi(g^2)
    &:= \int g^2 \ln(g^2) \, \D \pi  - \big(\int g^2 \, \D\pi\big) \ln \big(\int g^2 \, \D\pi\big)
    \le 2C_{\msf{LSI}} \int \norm{\nabla g}^2 \, \D \pi.
\end{align}
When $g=\sqrt{\D \mu_t/\D \pi}$,~\eqref{eq:lsi} reads 
$
D_{\rm KL}(\mu_t \mmid \pi) \le -2C_{\msf{LSI}}^{-1} \int \bigl\lVert \nabla \sqrt{\D\mu_t/\D\pi} \bigr\rVert^2\, \D \pi\,,$
which implies exponentially fast convergence: $D_{\rm KL}(\mu_t \mmid \pi)
    \le D_{\rm KL}(\mu_0 \mmid \pi) \, \e^{-2t/C_{\msf{LSI}}}$ by Gr\"onwall's inequality.

A disadvantage of this approach, however, is that the log-Sobolev inequality~\eqref{eq:lsi} does not hold for any log-concave measure $\pi$, or it may hold with a poor constant $C_{\msf{LSI}}$. For example, it is known that the log-Sobolev constant of an isotropic log-concave distribution must in general depend on the diameter of its support~\cite{leevempala2018lsi}. In contrast, we work below with a \emph{Poincar\'e inequality}, which is conjecturally satisfied by such distributions with a \emph{universal constant}~\cite{kls1995}.


Motivated by~\cite{bakryRateConvergenceErgodic2008, cattiaux2009trendtv},
we instead consider the \emph{chi-squared divergence}
\begin{align*}
    F(\mu) = \chi^2(\mu \mmid \pi) := \var_\pi \frac{\D\mu}{\D\pi}=\int \Big(\frac{\D \mu}{\D \pi}\Big)^2\D \pi -1, \qquad \text{if}~\mu \ll \pi\,,
\end{align*}
and $F(\mu)=\infty$ otherwise.
It is well-known that the law ${(\mu_t)}_{t\ge 0}$ of~\ref{eq:langevin_sde} satisfies the Fokker-Planck equation in the weak sense~\cite[\S 5.7]{karatzasBrownianMotion1998}:   
    \begin{align*}
        \partial_t \mu_t
        &= \divergence\bigl(\mu_t \, \nabla \ln \frac{\mu_t}{\pi}\bigr).
    \end{align*}
    Using this, we can compute the derivative of the chi-squared divergence:
    \begin{align*}
        \frac{1}{2}\partial_t F(\mu_t)
        &= \int \! \frac{\mu_t}{\pi} \, \partial_t \mu_t
        = \int \! \frac{\mu_t}{\pi} \divergence\bigl(\mu_t \nabla \ln \frac{\mu_t}{\pi}\bigr)
        = -\int \bigl\langle  \nabla \ln \frac{\mu_t}{\pi}, \nabla \frac{\mu_t}{\pi} \bigr\rangle \, \mu_t 
        = - \int \! \big\| \nabla \frac{\mu_t}{\pi}\big\|^2 \, \pi\,,
    \end{align*}
and exponential convergence of the chi-squared divergence follows if $\pi$ satisfies a Poincar\'e inequality:
\begin{equation}\label{eq:p}\tag{$\msf{P}$}
 \var_\pi g  \le  C_{\msf{P}} \E_\pi[\|\nabla g\|^2]  \qquad\text{for all locally Lipschitz}~g \in L^2(\pi).
\end{equation}
Thus, when using the chi-squared divergence to track progress, the role of the PL inequality is played by a Poincar\'e inequality. As we discuss in Sections~\ref{scn:newton_conv} and~\ref{scn:langevin_conv} below, the Poincar\'e inequality is significantly weaker than the log-Sobolev inequality.

A similar analysis may be carried out for \ref{eq:mld} using an appropriate variation of Poincar\'e inequalities.
\begin{defn}[Mirror Poincar\'e inequality]
Given a mirror map $\phi$, we say that the distribution $\pi$ satisfies a \emph{mirror Poincar\'e inequality}  with constant $C_{\msf{MP}}$ if
\begin{align}\label{eq:mp} \tag{$\msf{MP}$}
      \var_\pi g\le  C_{\msf{MP}} \E_\pi\langle \nabla g, {(\nabla^2 \phi)}^{-1} \nabla g \rangle
   \qquad \text{for all locally Lipschitz}~g \in L^2(\pi).
\end{align}
When $\phi=\|\cdot\|^2/2$,~\eqref{eq:mp} is simply called a \emph{Poincar\'e inequality} and the smallest  $C_{\msf{MP}}$ for which the inequality holds is the \emph{Poincar\'e constant} of $\pi$, denoted $C_{\msf{P}}$.
\end{defn}
Using a similar argument as the one above, we show exponential convergence of~\ref{eq:mld} in chi-squared divergence $\chi^2(\cdot \mmid \pi)$ under~\eqref{eq:mp}. Together with standard comparison inequalities between information divergences~\cite[\S 2.4]{tsybakov2009nonparametric}, it implies exponential convergence in a variety of commonly used divergences, including the total variation (TV) distance $\norm{\cdot -\pi}_{\rm TV}$, the Hellinger distance $H(\cdot, \pi)$, and the KL divergence $D_{\rm KL}(\cdot \mmid \pi)$.

\begin{thm} \label{thm:exp_chi_2}
For each $t\ge 0$, let $\mu_t$ be the marginal distribution of \ref{eq:mld}  with target distribution $\pi$ at time $t$. Then if $\pi$ satisfies the mirror Poincar\'e inequality~\eqref{eq:mp}
with constant $C_{\msf{MP}}$, it holds
\[
2 \norm{\mu_t - \pi}_{\rm TV}^2, \ H^2(\mu_t,\pi),\  D_{\rm KL}(\mu_t\mmid \pi),\ \chi^2(\mu_t\mmid \pi)\le \e^{-\frac{2t}{C_{\msf{MP}}}}\chi^2(\mu_0\mmid \pi), \quad \forall \, t \ge 0\,.
\]
\end{thm}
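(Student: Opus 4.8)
The plan is to mirror the informal derivation of Section~\ref{sec:mld}, but carrying along the matrix weight ${(\nabla^2\phi)}^{-1}$ in place of the identity, and then to deduce the bounds for $\norm{\cdot-\pi}_{\rm TV}$, $H(\cdot,\pi)$ and $D_{\rm KL}(\cdot\mmid\pi)$ from the bound on $\chi^2(\cdot\mmid\pi)$ via classical comparison inequalities. First I would identify the PDE governing the marginal law $\mu_t$ of \ref{eq:mld}. Writing $Y_t = \nabla\phi(X_t)$ and applying It\^o's formula to $X_t = \nabla\phi^\star(Y_t)$, together with $\nabla^2\phi^\star(Y_t) = {[\nabla^2\phi(X_t)]}^{-1}$, one sees that $(X_t)_{t\ge0}$ is itself an It\^o diffusion with diffusion matrix $2\,{[\nabla^2\phi(X_t)]}^{-1}$ and a drift that combines $-{[\nabla^2\phi(X_t)]}^{-1}\nabla V(X_t)$ with an It\^o correction involving $\nabla^3\phi$. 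The point — which I would verify in Appendix~\ref{appendix:markov_semigroup} — is that this correction is exactly what makes $\pi\propto\e^{-V}$ invariant, and more precisely that $\mu_t$ solves, in the weak sense, the Fokker--Planck equation
\[
\partial_t\mu_t = \divergence\bigl(\mu_t\,{[\nabla^2\phi]}^{-1}\nabla\ln\tfrac{\mu_t}{\pi}\bigr).
\]
Equivalently, the generator of \ref{eq:mld} is $\pi$-symmetric with Dirichlet form $g\mapsto\E_\pi\langle\nabla g,{(\nabla^2\phi)}^{-1}\nabla g\rangle$, so the whole argument could instead be phrased at the level of the associated Markov semigroup.

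Granting this, I would set $h_t := \D\mu_t/\D\pi$, so that $\chi^2(\mu_t\mmid\pi) = \var_\pi h_t = \int h_t^2\,\D\pi - 1$ (one may assume this is finite at $t=0$, else the statement is vacuous). Differentiating under the integral sign, inserting the Fokker--Planck equation, using $\mu_t\,\nabla\ln h_t = \pi\,\nabla h_t$, and integrating by parts (no boundary terms), one gets, exactly as in Section~\ref{sec:mld},
\[
\tfrac12\,\partial_t\chi^2(\mu_t\mmid\pi) = \int h_t\,\partial_t\mu_t = -\int\bigl\langle\nabla h_t,{[\nabla^2\phi]}^{-1}\nabla h_t\bigr\rangle\,\D\pi = -\,\E_\pi\bigl\langle\nabla h_t,{(\nabla^2\phi)}^{-1}\nabla h_t\bigr\rangle.
\]
Applying the mirror Poincar\'e inequality~\eqref{eq:mp} to the locally Lipschitz test function $g = h_t \in L^2(\pi)$ and using $\var_\pi h_t = \chi^2(\mu_t\mmid\pi)$ turns this into the differential inequality $\partial_t\chi^2(\mu_t\mmid\pi) \le -\tfrac{2}{C_{\msf{MP}}}\,\chi^2(\mu_t\mmid\pi)$, whence Gr\"onwall's inequality gives $\chi^2(\mu_t\mmid\pi)\le\e^{-2t/C_{\msf{MP}}}\,\chi^2(\mu_0\mmid\pi)$ for all $t\ge0$.

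To finish, I would compose this with the standard comparison inequalities $2\,\norm{\mu-\nu}_{\rm TV}^2\le D_{\rm KL}(\mu\mmid\nu)$ (Pinsker), $H^2(\mu,\nu)\le D_{\rm KL}(\mu\mmid\nu)$, and $D_{\rm KL}(\mu\mmid\nu)\le\chi^2(\mu\mmid\nu)$~\cite[\S2.4]{tsybakov2009nonparametric}, taken at $\mu=\mu_t$, $\nu=\pi$. The hard part is not any of the individual steps but the first one: making rigorous that the \ref{eq:mld} marginals do satisfy the displayed Fokker--Planck equation, that $t\mapsto\chi^2(\mu_t\mmid\pi)$ is differentiable, and that $h_t$ has enough regularity and integrability to be a legitimate test function in~\eqref{eq:mp} and in the integration by parts. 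This hinges on a careful treatment of the change of variables $X_t=\nabla\phi^\star(Y_t)$ and of the Markov semigroup of \ref{eq:mld}, which is why I would relegate it to a dedicated appendix rather than carry it out inline.
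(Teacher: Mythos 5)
Your proposal is correct and follows essentially the same route as the paper's primary proof: state the Fokker--Planck equation for $\mu_t$, differentiate $\chi^2(\mu_t\mmid\pi)$, integrate by parts, apply the mirror Poincar\'e inequality with test function $\D\mu_t/\D\pi$, invoke Gr\"onwall, and then pass to the other divergences via the standard comparison inequalities (the paper deduces $2\norm{\cdot-\pi}_{\rm TV}^2$, $H^2$, $D_{\rm KL}\le\chi^2$ directly rather than chaining through $D_{\rm KL}$, but that is immaterial). The semigroup reformulation you mention in passing is precisely the paper's second proof, and the regularity issues you flag are handled there by citing well-posedness of the Fokker--Planck equation.
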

We give two proofs of this result in Appendix~\ref{appendix:markov_semigroup}.

Recall that \ref{eq:langevin_sde} can be understood as a gradient flow for the KL divergence on the 2-Wasserstein space. In light of this interpretation, the above bound for the KL divergence yields a convergence rate \emph{in objective value}, and it is natural to wonder whether a similar rate holds for the iterates themselves in terms of 2-Wasserstein distance. From the works~\cite{ding2015quadratictransport, ledoux2018remarks, liu2020quadratictransport}, it is known that a Poincar\'e inequality~\eqref{eq:p} implies the transportation-cost inequality
\begin{align}\label{eq:liu_transport}
    W_2^2(\mu,\pi) \le 2C_{\msf P} \chi^2(\mu \mmid \pi), \qquad \forall \mu \ll \pi.
\end{align}
Initially unaware of these works, we proved that a Poincar\'e inequality implies a suboptimal chi-squared transportation inequality. Since the suboptimal inequality already suffices for our purposes, we state and prove it in Appendix~\ref{scn:wasserstein}. We thank Jon Niles-Weed for bringing this to our attention.

The inequality~\eqref{eq:liu_transport} implies that if $\pi$ has a finite Poincar\'e constant $C_{\msf P}$ then Theorem~\ref{thm:exp_chi_2} also yields exponential convergence in Wasserstein distance. In the rest of the paper, we write this as
$$
\frac{1}{2 C_{\msf P}} W_2^2(\mu_t,\pi) \le \e^{-\frac{2t}{C_{\msf{MP}}}} \chi^2(\mu_0 \mmid \pi)\,,
$$
for \emph{any} target measure $\pi$ that satisfies a mirror Poincar\'e inequality, with the convention that $C_{\msf P}=\infty$ when $\pi$ fails to satisfy a Poincar\'e inequality. In this case, the above inequality is simply vacucous.

\section{Applications}

We specialize Theorem~\ref{thm:exp_chi_2} to the following important applications.

\subsection{Newton-Langevin diffusion}\label{scn:newton_conv}
\label{subsec:newton}
For~\ref{eq:newton}, we assume that $V$ is strictly convex and twice continuously differentiable; take $\phi=V$. In this case, the mirror Poincar\'e inequality~\eqref{eq:mp} reduces to the \emph{Brascamp-Lieb inequality}, which is known to hold with constant $C_{\msf{MP}} = 1$ for any strictly log-concave distribution~$\pi$~\cite{BraLie76, bobkov2000prekopa,gentil2008prekopa}. It yields the following remarkable result where the exponential contraction rate has no dependence on $\pi$ nor on the dimension $d$.


\begin{cor}\label{cor:conv_newton}
    Suppose that $V$ is strictly convex and  twice continuously differentiable. Then, the law ${(\mu_t)}_{t\ge 0}$ of~\ref{eq:newton} satisfies
    \begin{align*}
       2 \norm{\mu_t - \pi}_{\rm TV}^2,\ H^2(\mu_t,\pi),\ D_{\rm KL}(\mu_t \mmid \pi),\ \chi^2(\mu_t\mmid \mu),\ \frac{1}{2 C_{\msf P}} W_2^2(\mu_t,\pi) \le \e^{-2t} \chi^2(\mu_0 \mmid \pi).
    \end{align*}
\end{cor}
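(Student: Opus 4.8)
The plan is to obtain Corollary~\ref{cor:conv_newton} as an immediate specialization of Theorem~\ref{thm:exp_chi_2}, the only real content being to identify the mirror Poincar\'e inequality~\eqref{eq:mp} for the choice $\phi = V$ with the Brascamp-Lieb inequality. First I would note that~\ref{eq:newton} is exactly~\ref{eq:mld} with mirror map $\phi = V$, so that Theorem~\ref{thm:exp_chi_2} applies verbatim once we check that $\pi$ satisfies~\eqref{eq:mp} with some explicit constant. With $\phi = V$, inequality~\eqref{eq:mp} reads
\[
\var_\pi g \le C_{\msf{MP}}\, \E_\pi \langle \nabla g, {(\nabla^2 V)}^{-1} \nabla g \rangle ,
\]
which is precisely the Brascamp-Lieb inequality for $\pi = e^{-V}$. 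Thus I would recall that, since $V$ is twice continuously differentiable and strictly convex (so $\nabla^2 V(x)$ is positive definite at every $x$), the Brascamp-Lieb inequality holds with constant $1$; see~\cite{BraLie76, bobkov2000prekopa, gentil2008prekopa}. A small technical point worth a remark: the classical Brascamp-Lieb statement is typically phrased for smooth compactly supported (or $C^1$) test functions, whereas~\eqref{eq:mp} requires it for every locally Lipschitz $g \in L^2(\pi)$; this is bridged by a routine truncation-and-mollification density argument, which I would not carry out in detail.

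Granting $C_{\msf{MP}} = 1$, Theorem~\ref{thm:exp_chi_2} immediately yields, for the law $(\mu_t)_{t \ge 0}$ of~\ref{eq:newton},
\[
2 \norm{\mu_t - \pi}_{\rm TV}^2,\ H^2(\mu_t,\pi),\ D_{\rm KL}(\mu_t \mmid \pi),\ \chi^2(\mu_t \mmid \pi) \le \e^{-2t}\, \chi^2(\mu_0 \mmid \pi), \qquad \forall\, t \ge 0.
\]
For the remaining Wasserstein bound, I would invoke the transportation-cost inequality~\eqref{eq:liu_transport}, i.e.\ $W_2^2(\mu,\pi) \le 2 C_{\msf P}\, \chi^2(\mu \mmid \pi)$, applied at $\mu = \mu_t$ and combined with the chi-squared estimate just obtained, to get $\tfrac{1}{2 C_{\msf P}} W_2^2(\mu_t,\pi) \le \e^{-2t}\, \chi^2(\mu_0 \mmid \pi)$; when $\pi$ fails to satisfy a Poincar\'e inequality we set $C_{\msf P} = \infty$ and this last bound is vacuous, consistent with the convention adopted right after Theorem~\ref{thm:exp_chi_2}. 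Assembling these four (respectively five) inequalities gives exactly the displayed statement of the corollary.

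There is no serious obstacle here, since the result is a corollary; the only thing requiring care is bookkeeping — verifying that strict convexity of $V$ together with $V \in C^2$ genuinely supplies the hypothesis of Theorem~\ref{thm:exp_chi_2} with constant exactly $1$, and no hidden growth or regularity assumption is smuggled in. It is worth stressing in the write-up the reason the rate $\e^{-2t}$ is free of any dependence on $\pi$ and on the dimension $d$: the Brascamp-Lieb constant is universally $1$, whereas the analogous guarantee for the vanilla Langevin diffusion is governed by the Poincar\'e constant $C_{\msf P}$, which can degrade arbitrarily for ill-conditioned or large-diameter targets.
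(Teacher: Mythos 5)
Your proposal is correct and follows essentially the same route as the paper: specialize Theorem~\ref{thm:exp_chi_2} to $\phi = V$, recognize the mirror Poincar\'e inequality~\eqref{eq:mp} as the Brascamp-Lieb inequality with $C_{\msf{MP}} = 1$ for strictly log-concave $\pi$, and append the Wasserstein bound via~\eqref{eq:liu_transport}. The extra remark on extending Brascamp-Lieb to locally Lipschitz test functions is a reasonable bit of bookkeeping that the paper leaves implicit.
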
 

\begin{wrapfigure}[13]{r}{.45\textwidth}
    \vspace{-0.3cm}
    \begin{center}
    \includegraphics[width = 0.35\textwidth]{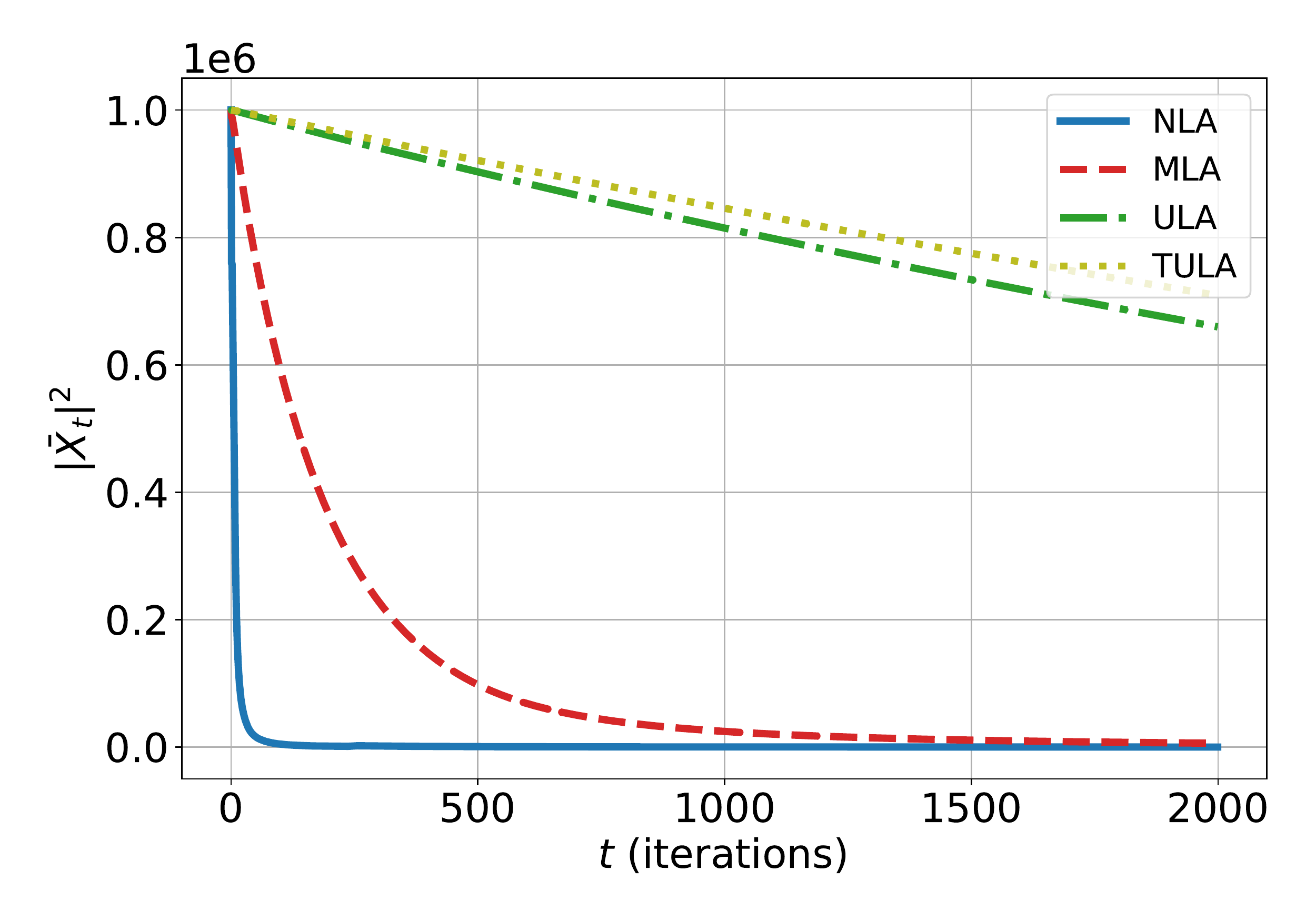}
    \end{center}
    \vspace{-.6cm}
    \caption{Approximately sampling from $\pi \propto \e^{-\|\cdot\|}$ by sampling from $\pi_\beta \propto \e^{-\|\cdot\| - \beta\|\cdot-\mathbf 1\|^2}$ ($\beta=.0005$). Algorithms are initialized at a random $X_0$ with $\|X_0\| = 1000$. The plot shows the squared distance of the running means to $0$.}
    \label{fig:lapfull2}
\end{wrapfigure}

If $\pi$ is log-concave,
then it satisfies a Poincar\'e inequality~\cite{alonso2015kls,leevempala2017poincare} so that the result in Wasserstein distance holds. In fact, contingent on the famous \emph{Kannan-Lov\'asz-Simonovitz} (KLS) conjecture \cite{kls1995}, the Poincar\'e constant of any log-concave distribution $\pi$ is upper bounded by a constant, independent of the dimension, times the largest eigenvalue of the covariance matrix of $\pi$.

 At this point, one may wonder, under the same assumptions as the Brascamp-Lieb inequality, whether a mirror version of the log-Sobolev inequality~\eqref{eq:lsi} holds. This question was answered negatively in~\cite{bobkov2000prekopa}, thus reinforcing our use of the chi-squared divergence as a surrogate for the KL divergence.

If the potential $V$ is convex, but degenerate (i.e., not strictly convex) we cannot use \ref{eq:newton} directly with $\pi$ as the target distribution. Instead, we perturb $\pi$ slightly to a new measure $\pi_\beta$, which is strongly log-concave, and for which we can use \ref{eq:newton}. Crucially, due to the scale invariance of \ref{eq:newton}, the time it takes for \ref{eq:newton} to mix does not depend on $\beta$, the parameter which governs the approximation error.

\begin{cor}\label{cor:degenerate_sampling}
    Fix a target accuracy $\varepsilon > 0$.
    Suppose $\pi = \e^{-V}$ is log-concave and set $\pi_\beta \propto \e^{-V- \beta \|\cdot\|^2}$, where $\beta \le \varepsilon^2/(2\int \|\cdot\|^2 \, \D \pi)$. Then, the law ${(\mu_t)}_{t\ge 0}$ of \ref{eq:newton} with target distribution $\pi_\beta$ satisfies $\norm{\mu_t - \pi}_{\rm TV}\le \eps$ by time $t = \frac{1}{2} \ln[2\chi^2(\mu_0 \mmid \pi_\beta)] + \ln(1/\varepsilon)$.
\end{cor}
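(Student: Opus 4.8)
The plan is to combine three ingredients: (i) the exponential convergence of \ref{eq:newton} toward its \emph{own} target $\pi_\beta$, which follows from Corollary~\ref{cor:conv_newton} applied to the strongly log-concave measure $\pi_\beta$; (ii) a triangle-type inequality for total variation, splitting $\norm{\mu_t - \pi}_{\rm TV} \le \norm{\mu_t - \pi_\beta}_{\rm TV} + \norm{\pi_\beta - \pi}_{\rm TV}$; and (iii) a bound on the bias term $\norm{\pi_\beta - \pi}_{\rm TV}$ in terms of $\beta$ and the second moment of $\pi$. The point is that the convergence rate in (i) is the dimension- and target-independent rate $\e^{-2t}$, so $\beta$ only enters through the bias term and (mildly) through $\chi^2(\mu_0 \mmid \pi_\beta)$, never through the mixing time.

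For step (i), note that $V + \beta\norm\cdot^2$ is strictly convex and twice continuously differentiable whenever $V$ is convex and $C^2$, so Corollary~\ref{cor:conv_newton} applies verbatim with $\pi$ replaced by $\pi_\beta$, giving
\[
    2\norm{\mu_t - \pi_\beta}_{\rm TV}^2 \le \e^{-2t}\, \chi^2(\mu_0 \mmid \pi_\beta),
\]
hence $\norm{\mu_t - \pi_\beta}_{\rm TV} \le \e^{-t} \sqrt{\chi^2(\mu_0\mmid\pi_\beta)/2}$. This is at most $\eps/2$ as soon as $\e^{-t}\sqrt{\chi^2(\mu_0\mmid\pi_\beta)/2} \le \eps/2$, i.e. $t \ge \tfrac12\ln[2\chi^2(\mu_0\mmid\pi_\beta)] + \ln(1/\eps) - \tfrac12\ln 2$; I would just use the slightly cleaner threshold $t = \tfrac12\ln[2\chi^2(\mu_0\mmid\pi_\beta)] + \ln(1/\eps)$ stated in the corollary, which is certainly sufficient.

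For step (iii), I need $\norm{\pi_\beta - \pi}_{\rm TV} \le \eps/2$ under the hypothesis $\beta \le \eps^2/(2\int\norm\cdot^2\,\D\pi)$. Write $\pi_\beta = Z_\beta^{-1}\e^{-\beta\norm\cdot^2}\pi$ with $Z_\beta = \int \e^{-\beta\norm\cdot^2}\,\D\pi \in (0,1]$. A convenient route is via the KL divergence and Pinsker's inequality: $\KL(\pi \mmid \pi_\beta) = \beta\int\norm\cdot^2\,\D\pi + \ln Z_\beta \le \beta\int\norm\cdot^2\,\D\pi$ since $\ln Z_\beta \le 0$. Then Pinsker gives $\norm{\pi - \pi_\beta}_{\rm TV} \le \sqrt{\KL(\pi\mmid\pi_\beta)/2} \le \sqrt{\beta \int\norm\cdot^2\,\D\pi / 2} \le \eps/2$ by the choice of $\beta$. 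Combining with step (i) and the triangle inequality yields $\norm{\mu_t - \pi}_{\rm TV} \le \eps$ at the claimed time.

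The main obstacle — really the only subtle point — is controlling the bias term cleanly; the KL/Pinsker computation above sidesteps any need to estimate the normalizing constant $Z_\beta$ precisely, using only the trivial bound $Z_\beta \le 1$. One should double-check that $\chi^2(\mu_0\mmid\pi_\beta)$ is finite (this is implicit in the statement, as otherwise the bound is vacuous) and that the moment $\int\norm\cdot^2\,\D\pi$ is finite, which holds for any log-concave $\pi$. Everything else is Gr\"onwall's inequality, already packaged inside Theorem~\ref{thm:exp_chi_2} and Corollary~\ref{cor:conv_newton}.
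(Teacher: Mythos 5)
Your proof is correct and follows essentially the same route as the paper: the same KL bound $D_{\rm KL}(\pi \mmid \pi_\beta) \le \beta \int \norm\cdot^2 \D\pi$ using $\ln Z_\beta \le 0$, followed by Pinsker and the triangle inequality for total variation. The only cosmetic difference is that you invoke the TV bound from Corollary~\ref{cor:conv_newton} directly, whereas the paper bounds $D_{\rm KL}(\mu_t \mmid \pi_\beta)$ via Theorem~\ref{thm:exp_chi_2} and then applies Pinsker a second time; these are equivalent since the TV bound in Theorem~\ref{thm:exp_chi_2} is itself derived from the KL bound by Pinsker.
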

\begin{proof}
    From our assumption, it holds
    \begin{align*}
        D_{\rm KL}(\pi \mmid \pi_\beta)
       = \int \ln \frac{\D\pi}{\D\pi_\beta} \, \D \pi
       = \beta \int \|\cdot\|^2 \, \D\pi + \ln \int \e^{-\beta\|\cdot\|^2} \, \D \pi
        \le \beta \int\|\cdot\|^2 \, \D \pi
        \le \frac{\varepsilon^2}{2}.
    \end{align*}
    Moreover, Theorem~\ref{thm:exp_chi_2} with the above choice of $t$ yields $D_{\rm KL}(\mu_t \mmid \pi_\beta) \le \eps^2/2$. To conclude, we use Pinsker's inequality and the triangle inequality for $\|\cdot\|_{\rm TV}$.
\end{proof}

Convergence guarantees for other cases where $\phi$ is only a \emph{proxy} for $V$  are presented in Appendix~\ref{app:ext}. 

\subsection{Sampling from the uniform distribution on a convex body}\label{scn:unif_sampling}

Next, we consider an application of~\ref{eq:newton} to the problem of sampling from the uniform distribution $\pi$ on a convex body $\eu C$. A natural method of outputting an approximate sample from $\pi$ is to take a strictly convex function $\widetilde V : \R^d\to\R \cup \{\infty\}$ such that $\dom \widetilde V = \eu C$ and $\widetilde V(x) \to \infty$ as $x\to\partial \eu C$, and to run~\ref{eq:newton} with target distribution $\pi_\beta \propto \e^{-\beta \widetilde V}$, where the inverse temperature $\beta$ is taken to be small (so that $\pi_\beta \approx \pi$). The function $\widetilde V$ is known as a \emph{barrier function}.

Although we can take any choice of barrier function $\widetilde V$, we obtain a clean theoretical result if we assume that $\widetilde V$ is $\nu^{-1}$-$\exp$-concave, that is, the mapping $\exp(- \nu^{-1} \widetilde V)$ is concave. Interestingly, this assumption further deepens the rich analogy between sampling and optimization, since such barriers are widely studied in the optimization literature. There, the property of $\exp$-concavity is typically paired with the property of \emph{self-concordance}, and barrier functions satisfying these two properties are a cornerstone of the theory of \emph{interior point algorithms} (see~\cite[\S 5.3]{bubeck2015convex} and~\cite[\S 4]{nesterov2004optimization}).

\begin{wrapfigure}[14]{r}{.20\textwidth}
    \begin{center}
     \includegraphics[width=.18\textwidth]{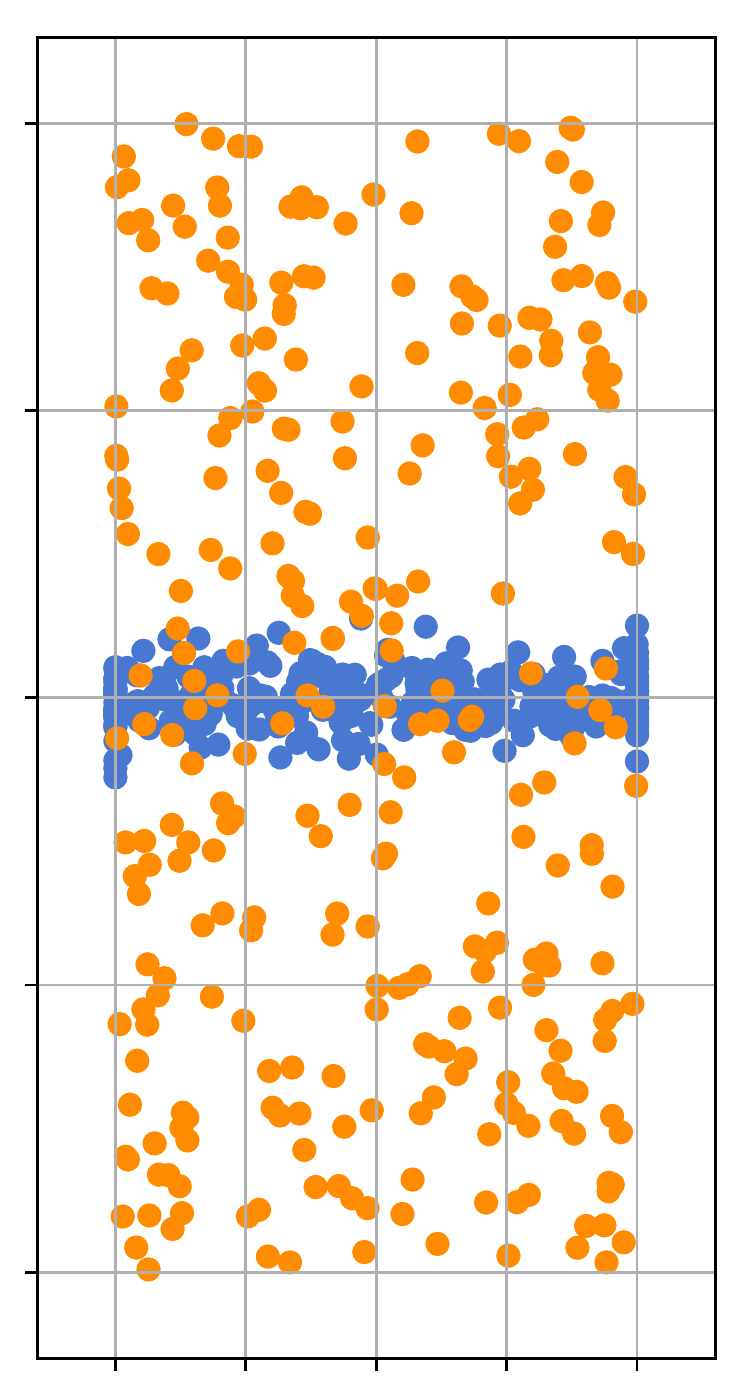}
    \end{center}
    \vspace{-0.5cm}
    \caption{Uniform sampling from the set $[-0.01,0.01]\times[-1,1]$: PLA (blue) vs.\ \ref{eq:NLA} (orange). See Section~\ref{subsec:unif_cvx}.}
    \label{fig:ellipse}
\end{wrapfigure}
We now formulate our sampling result. In our continuous framework, it does not require self-concordance of the barrier function.

\begin{cor}\label{cor:unif_sampling}
    Fix a target accuracy $\varepsilon>0$. Let $\pi$ be the uniform distribution over a convex body $\eu C$ and let $\widetilde V$ be a $\nu^{-1}$-$\exp$-concave barrier for $\eu C$. Then, the law ${(\mu_t)}_{t\ge 0}$ of~\ref{eq:newton} with target density $\pi_\beta \propto \e^{-\beta \widetilde V}$ for $\beta \le \varepsilon^2/(2\nu)$ satisfies
    $\norm{\mu_t - \pi}_{\rm TV} \le \varepsilon$
    by time $t= \frac{1}{2} \ln[2\chi^2(\mu_0 \mmid \pi_\beta)] + \ln (1/\varepsilon)$.
\end{cor}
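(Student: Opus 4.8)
The plan is to mirror the structure of the proof of Corollary~\ref{cor:degenerate_sampling}, replacing the quadratic perturbation bound with the $\exp$-concavity of the barrier. First I would observe that since $\widetilde V$ is a strictly convex barrier with $\dom \widetilde V = \eu C$, the measure $\pi_\beta \propto \e^{-\beta \widetilde V}$ is strictly log-concave (its potential $\beta\widetilde V$ is strictly convex and twice continuously differentiable on the interior of $\eu C$), so Corollary~\ref{cor:conv_newton} applies to~\ref{eq:newton} with target $\pi_\beta$. In particular, with the stated choice $t = \tfrac12 \ln[2\chi^2(\mu_0 \mmid \pi_\beta)] + \ln(1/\varepsilon)$, the exponential decay $D_{\rm KL}(\mu_t \mmid \pi_\beta) \le \e^{-2t}\,\chi^2(\mu_0 \mmid \pi_\beta) = \varepsilon^2/2$ follows immediately from the KL line of Corollary~\ref{cor:conv_newton}.

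The second step is to bound the bias $\norm{\pi - \pi_\beta}_{\rm TV}$, and here I would again go through $D_{\rm KL}(\pi \mmid \pi_\beta)$. Writing $Z_\beta = \int_{\eu C} \e^{-\beta \widetilde V}$ and $\lvert \eu C\rvert$ for the volume of $\eu C$, one computes
\begin{align*}
    D_{\rm KL}(\pi \mmid \pi_\beta)
    &= \int \ln \frac{\D \pi}{\D \pi_\beta} \, \D \pi
    = \frac{1}{\lvert \eu C\rvert}\int_{\eu C}\Bigl(\beta \widetilde V - \ln \frac{Z_\beta}{\lvert \eu C\rvert}\Bigr)
    = \beta \, \E_\pi \widetilde V + \ln \frac{\lvert \eu C\rvert}{Z_\beta}.
\end{align*}
Since $\ln x \le x - 1$ gives $\ln(\lvert \eu C\rvert / Z_\beta) = -\ln \E_\pi \e^{-\beta \widetilde V} \le -\ln\bigl(1 - \beta\,\E_\pi \widetilde V\bigr)$ is not quite the right direction, the cleaner route is the standard bound $\ln \E_\pi \e^{-\beta \widetilde V} \ge -\beta\,\E_\pi \widetilde V$ by Jensen, which yields $\ln(\lvert \eu C\rvert/Z_\beta) \le \beta\, \E_\pi \widetilde V$, and hence $D_{\rm KL}(\pi \mmid \pi_\beta) \le 2\beta\, \E_\pi \widetilde V$ — but I want to remove the dependence on $\E_\pi \widetilde V$ entirely. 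This is exactly where $\nu^{-1}$-$\exp$-concavity enters: concavity of $x \mapsto \e^{-\nu^{-1}\widetilde V(x)}$ on $\eu C$ forces $\widetilde V$ to be controlled by the geometry of $\eu C$ in a way that makes $\beta\, \E_\pi \widetilde V$ (after recentering $\widetilde V$ by an additive constant, which does not change $\pi_\beta$) bounded by $\nu/2$ once $\beta \le \varepsilon^2/(2\nu)$; concretely, $\exp$-concavity implies the sublevel-set normalization $\int_{\eu C} \e^{-\nu^{-1}\widetilde V} \, / \, \lvert\eu C\rvert \ge \e^{-\nu^{-1}\min \widetilde V}\cdot c_d$ type estimates that, combined with $\ln \E_\pi \e^{-\beta\widetilde V} \ge -\beta \E_\pi \widetilde V$ and a comparison of $\beta\widetilde V$ with $\nu^{-1}\widetilde V$, collapse the bias to $D_{\rm KL}(\pi \mmid \pi_\beta) \le \beta\nu \le \varepsilon^2/2$. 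I expect the precise form of this $\exp$-concavity estimate to be the main obstacle — one must argue that shifting $\widetilde V$ so that $\inf \widetilde V = 0$, the quantity $\E_\pi \widetilde V$ is at most $\nu$ (the natural scale on which $\e^{-\nu^{-1}\widetilde V}$ decays from its max), which is a one-dimensional-in-spirit but genuinely geometric fact.

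Finally, I would combine the two halves by the triangle inequality for $\norm{\cdot}_{\rm TV}$ together with Pinsker's inequality applied to each KL bound:
\begin{align*}
    \norm{\mu_t - \pi}_{\rm TV}
    \le \norm{\mu_t - \pi_\beta}_{\rm TV} + \norm{\pi_\beta - \pi}_{\rm TV}
    \le \sqrt{\tfrac12 D_{\rm KL}(\mu_t \mmid \pi_\beta)} + \sqrt{\tfrac12 D_{\rm KL}(\pi \mmid \pi_\beta)}
    \le \tfrac{\varepsilon}{2} + \tfrac{\varepsilon}{2} = \varepsilon,
\end{align*}
where I have used $D_{\rm KL}(\mu_t \mmid \pi_\beta) \le \varepsilon^2/2$ from the first step and $D_{\rm KL}(\pi \mmid \pi_\beta) \le \varepsilon^2/2$ from the second. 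This closes the argument. The only delicate points are (i) justifying that Corollary~\ref{cor:conv_newton} applies despite $\widetilde V$ blowing up at $\partial\eu C$ (one uses that $\pi_\beta$ is supported on the open body where $\widetilde V$ is smooth and strictly convex, so the Brascamp–Lieb inequality with constant $1$ still holds), and (ii) the $\exp$-concavity bias estimate in step two, which is where essentially all the content of the corollary beyond Corollary~\ref{cor:degenerate_sampling} resides.
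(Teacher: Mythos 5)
Your overall architecture is right — Theorem~\ref{thm:exp_chi_2} for the convergence to $\pi_\beta$, then Pinsker plus the triangle inequality for $\norm{\cdot}_{\rm TV}$ — and your point (i) about the Brascamp–Lieb inequality on the open body is fine. The gap is in step two, the bias bound, and it is genuine.

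You choose to bound $D_{\rm KL}(\pi \mmid \pi_\beta)$, mimicking Corollary~\ref{cor:degenerate_sampling}, which reduces to showing $\beta\,\E_\pi \widetilde V \le \varepsilon^2/2$, i.e.\ (after recentering $\widetilde V$ so that its infimum is $0$) that $\E_\pi \widetilde V \le \nu$. You flag this as the main obstacle and do not prove it — and in fact it is \emph{false} in dimension $d \ge 2$. Writing $u := \exp(-\nu^{-1}\widetilde V)$ (concave, $\max u = 1$), the claim is $\E_\pi[-\ln u] \le 1$. The best that concavity alone gives is $\E_\pi[-\ln u] \le H_d := \sum_{k=1}^d 1/k$: along each ray from the maximizer to $\partial\eu C$ one has $u \ge 1 - r/R$, and averaging $-\ln(1-r/R)$ with the radial weight $r^{d-1}\,\D r$ gives exactly $H_d$. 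This bound is attained up to constants: for $\eu C = {[0,1]}^2$ and $u(x,y) = 1 - \norm{(2x-1, 2y-1)}_\infty$ (a $1$-exp-concave barrier, $\nu=1$, smoothable to be strictly convex), one computes $\E_\pi[-\ln u] = 3/2 = H_2 > 1$. So the inequality $\E_\pi\widetilde V \le \nu$ that your proof needs does not follow from $\exp$-concavity; at best your route gives $D_{\rm KL}(\pi \mmid \pi_\beta) \le \beta\nu H_d$, which would force $\beta \le \varepsilon^2/(2\nu H_d)$ rather than the stated $\beta \le \varepsilon^2/(2\nu)$.

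The paper avoids this by bounding the KL divergence \emph{in the opposite direction}: Lemma~\ref{lem:self_concordant} shows $D_{\rm KL}(\pi_\beta \mmid \pi) \le \beta\nu$ (apply the lemma with $b = \beta\widetilde V$, which is $\beta\nu$-$\exp$-concave). The proof of that lemma is not a direct integral estimate in $\pi$; it uses displacement convexity of $D_{\rm KL}(\cdot \mmid \pi)$ along the Wasserstein geodesic from $\pi_\beta$ to $\pi$, the above-tangent formulation of convexity to get $D_{\rm KL}(\pi_\beta \mmid \pi) \le \E\langle \nabla b(\widetilde X), X - \widetilde X\rangle$ for the optimal coupling, and then a one-dimensional convexity argument from $\exp$-concavity of $b$ to bound that inner product by $\nu$ pointwise. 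Because the expectation there is over $\pi_\beta$ (which puts little mass where $\widetilde V$ blows up) rather than over $\pi$, the dimension-dependent $H_d$ factor never appears. Since Pinsker applies to either direction of the KL divergence, this still combines via the triangle inequality exactly as you intended — the only change needed is to bound $D_{\rm KL}(\pi_\beta \mmid \pi)$ rather than $D_{\rm KL}(\pi \mmid \pi_\beta)$.
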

\begin{proof}
Lemma~\ref{lem:self_concordant} in Appendix~\ref{appendix:auxiliary} ensures that $D_{\rm KL}(\pi_\beta \mmid \pi) \le \varepsilon^2/2$. We conclude as in the proof of Corollary~\ref{cor:degenerate_sampling}, by using Theorem~\ref{thm:exp_chi_2}, Pinsker's inequality, and the triangle inequality for $\|\cdot\|_{\rm TV}$.
\end{proof}


We demonstrate the efficacy of \ref{eq:newton} in a simple simulation: sampling uniformly from the ill-conditioned rectangle $[-a,a]\times[-1,1]$ with $a=0.01$ (Figure~\ref{fig:ellipse}).
We compare \ref{eq:NLA} with the Projected Langevin Algorithm (PLA)~\cite{bubeck2018sampling}, both with 200 iterations and $h=10^{-4}$.
For \ref{eq:NLA}, we take $\widetilde V(x) = -\log (1-x_1^2) - \log (a^2 - x_2^2)$
and $\beta = 10^{-4}$.

\subsection{Langevin diffusion under a Poincar\'e inequality}\label{scn:langevin_conv}

We conclude this section by giving some implications of Theorem~\ref{thm:exp_chi_2} to the classical Langevin diffusion~\eqref{eq:langevin_sde} when $\phi=\|\cdot\|^2/2$. In this case, the mirror Poincar\'e inequality~\eqref{eq:mp} reduces to the classical Poincar\'e inequality~\eqref{eq:p} as in Section~\ref{sec:mld}. 


\begin{cor}
Suppose that $\pi$ satisfies a Poincar\'e inequality~\eqref{eq:p} with constant $C_{\msf P} > 0$. Then, the law ${(\mu_t)}_{t\ge 0}$ of the Langevin diffusion~\eqref{eq:langevin_sde} satisfies
\begin{align*}
2 \norm{\mu_t - \pi}_{\rm TV}^2,\ H^2(\mu_t,\pi),\ D_{\rm KL}(\mu_t \mmid \pi),\ \chi^2(\mu_t\mmid \mu),\ \frac{1}{2 C_{\msf P}} W_2^2(\mu_t,\pi) \le \e^{-\frac{2t}{C_{\msf P}}} \chi^2(\mu_0 \mmid \pi).
\end{align*}
\end{cor}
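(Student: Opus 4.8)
The plan is to observe that this corollary is simply Theorem~\ref{thm:exp_chi_2} specialized to the mirror map $\phi = \|\cdot\|^2/2$, and to check that each object appearing in that theorem degenerates to its Euclidean counterpart under this choice.

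First I would verify that $\eqref{eq:mld}$ with $\phi = \|\cdot\|^2/2$ is nothing but $\eqref{eq:langevin_sde}$. This $\phi$ is strictly convex, twice continuously differentiable, and of Legendre type, hence an admissible mirror map; moreover it equals its own convex conjugate, so $\nabla \phi^\star = \mathrm{id}$ and $X_t = Y_t$, while $\nabla^2 \phi \equiv I_d$ gives ${[\nabla^2 \phi(X_t)]}^{1/2} = I_d$. The SDE in $\eqref{eq:mld}$ therefore reduces to $\D Y_t = -\nabla V(Y_t)\,\D t + \sqrt 2\,\D B_t$, i.e.\ $\eqref{eq:langevin_sde}$, so the marginal laws ${(\mu_t)}_{t\ge 0}$ coincide.

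Next, with $\phi = \|\cdot\|^2/2$ the operator ${(\nabla^2 \phi)}^{-1}$ equals the identity, so the mirror Poincar\'e inequality $\eqref{eq:mp}$ reads $\var_\pi g \le C_{\msf{MP}}\,\E_\pi[\|\nabla g\|^2]$, which is precisely the Poincar\'e inequality $\eqref{eq:p}$ with $C_{\msf{MP}} = C_{\msf P}$. Hence the hypothesis of the corollary is exactly the hypothesis of Theorem~\ref{thm:exp_chi_2} in this case, and applying that theorem immediately yields
\[
  2\,\norm{\mu_t - \pi}_{\rm TV}^2,\ H^2(\mu_t,\pi),\ D_{\rm KL}(\mu_t\mmid\pi),\ \chi^2(\mu_t\mmid\pi) \le \e^{-2t/C_{\msf P}}\,\chi^2(\mu_0\mmid\pi).
\]
For the Wasserstein bound I would invoke the transportation-cost inequality $\eqref{eq:liu_transport}$, valid under $\eqref{eq:p}$, namely $W_2^2(\mu_t,\pi) \le 2C_{\msf P}\,\chi^2(\mu_t\mmid\pi)$; combining it with the chi-squared decay above gives $\tfrac{1}{2C_{\msf P}}\,W_2^2(\mu_t,\pi) \le \chi^2(\mu_t\mmid\pi) \le \e^{-2t/C_{\msf P}}\,\chi^2(\mu_0\mmid\pi)$, which is the last claimed inequality.

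I do not expect a substantial obstacle: the result is a corollary obtained purely by specialization. The only points deserving a word of care are confirming that $\|\cdot\|^2/2$ qualifies as a mirror map so that Theorem~\ref{thm:exp_chi_2} genuinely applies, and noting that $C_{\msf P}$ is finite by hypothesis here, so the Wasserstein statement is non-vacuous (unlike in the general formulation following Theorem~\ref{thm:exp_chi_2}, where $C_{\msf P}$ may be infinite).
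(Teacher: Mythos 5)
Your proposal is correct and is exactly how the paper derives this corollary: the text preceding the statement simply notes that with $\phi = \|\cdot\|^2/2$ the mirror Poincar\'e inequality~\eqref{eq:mp} reduces to the classical Poincar\'e inequality~\eqref{eq:p}, and the result follows by specializing Theorem~\ref{thm:exp_chi_2} together with the transportation inequality~\eqref{eq:liu_transport} for the Wasserstein bound. Your added checks that $\|\cdot\|^2/2$ is a valid mirror map and that~\eqref{eq:mld} reduces to~\eqref{eq:langevin_sde} are the right points to verify and match the paper's implicit reasoning.
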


The convergence in TV distance recovers results of~\cite{dalalyan2017theoretical, durmus2017nonasymptoticlangevin}. Bounds for the stronger error metric $\chi^2(\cdot\mmid \pi)$ have appeared explicitly in \cite{cao2019renyi, vempala2019langevin} and is implicit in the work of~\cite{bakryRateConvergenceErgodic2008, cattiaux2009trendtv} on which the TV bound of~\cite{durmus2017nonasymptoticlangevin} is based.

Moreover, it is classical that if $\pi$ satisfies a log-Sobolev inequality~\eqref{eq:lsi} with constant $C_{\msf{LSI}}$ then it has Poincar\'e constant $C_{\msf P} \le C_{\msf{LSI}}$. Thus, the choice of the chi-squared divergence as a surrogate for the KL divergence when tracking progress indeed requires weaker assumptions on $\pi$.

\section{Numerical experiments}
\label{sec:sim}

In this section, we examine the numerical performance of the \emph{Newton-Langevin Algorithm} (NLA), which is given by the following Euler discretization of~\ref{eq:newton}:
\begin{equation}\label{eq:NLA}\tag{$\msf{NLA}$}
\nabla V(X_{k+1}) = (1 - h)\nabla V(X_k) + \sqrt{2 h} \, {[\nabla^2 V(X_k)]}^{1/2} \xi_k,
\end{equation}
where ${(\xi_k)}_{k\in\N}$ is a sequence of i.i.d.\ $\mathcal{N}(0, I_d)$ variables.
In cases where $\nabla V$ does not have a closed-form inverse, such as the logistic regression case of Section~\ref{subsec:lrdetails}, we invert it numerically by solving the convex optimization problem $\nabla V^{\star}(y) = \argmax_{x \in \R^d}{\{\langle x, y \rangle - V(x)\}}.$ 

We focus here on sampling from an ill-conditioned generalized Gaussian distribution on $\R^{100}$ with $V(x) = \langle x, \Sigma^{-1} x \rangle^\gamma/2$ for $\gamma = 3/4$ to demonstrate the scale invariance of~\ref{eq:newton} established in Corollary~\ref{cor:conv_newton}. Additional experiments, including the Gaussian case $\gamma=1$, are given in Appendix~\ref{appendix:numericals}.


\begin{figure}[h]
    \centering
    \includegraphics[width = 0.49\textwidth]{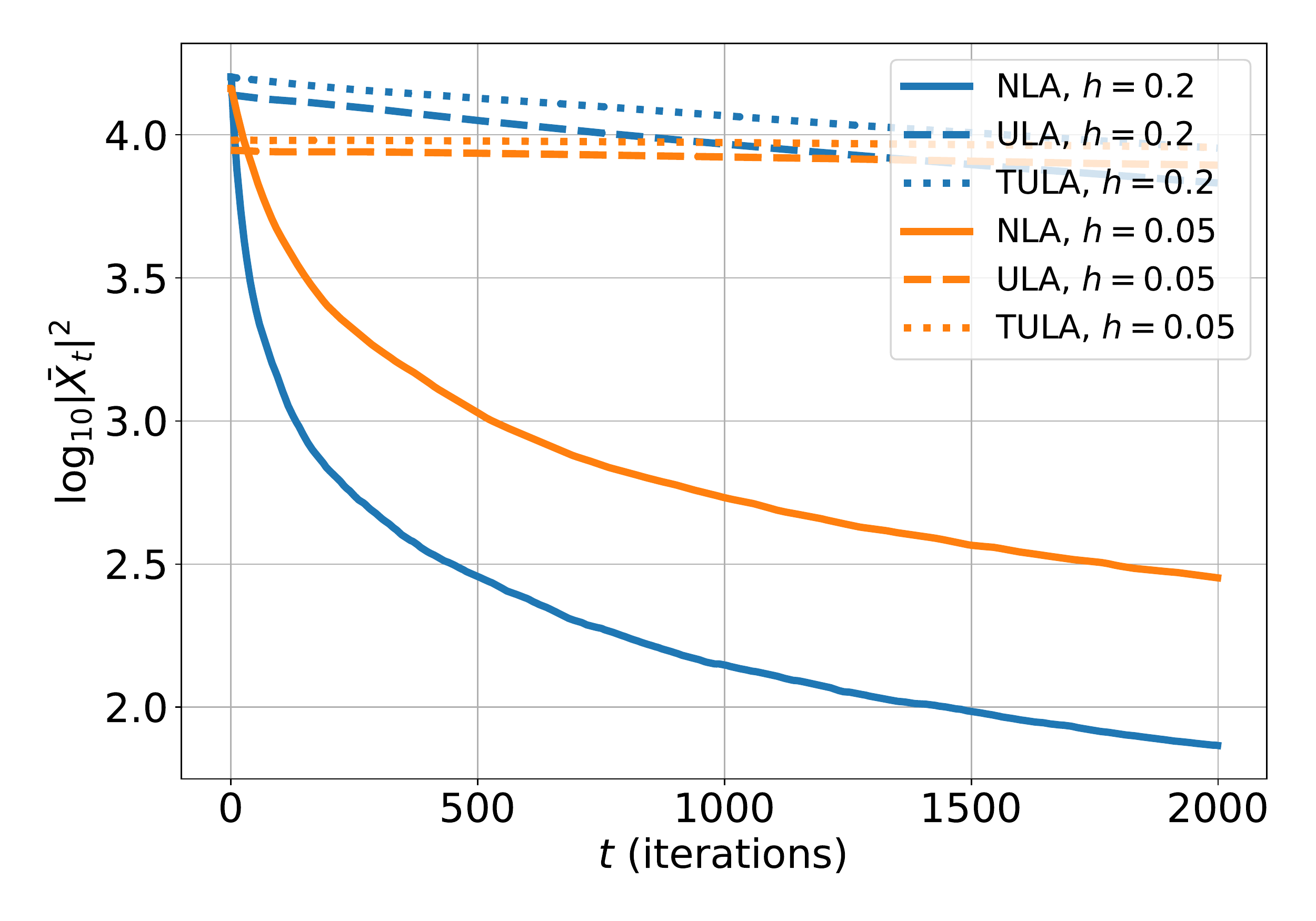}
    \includegraphics[width = 0.49\textwidth]{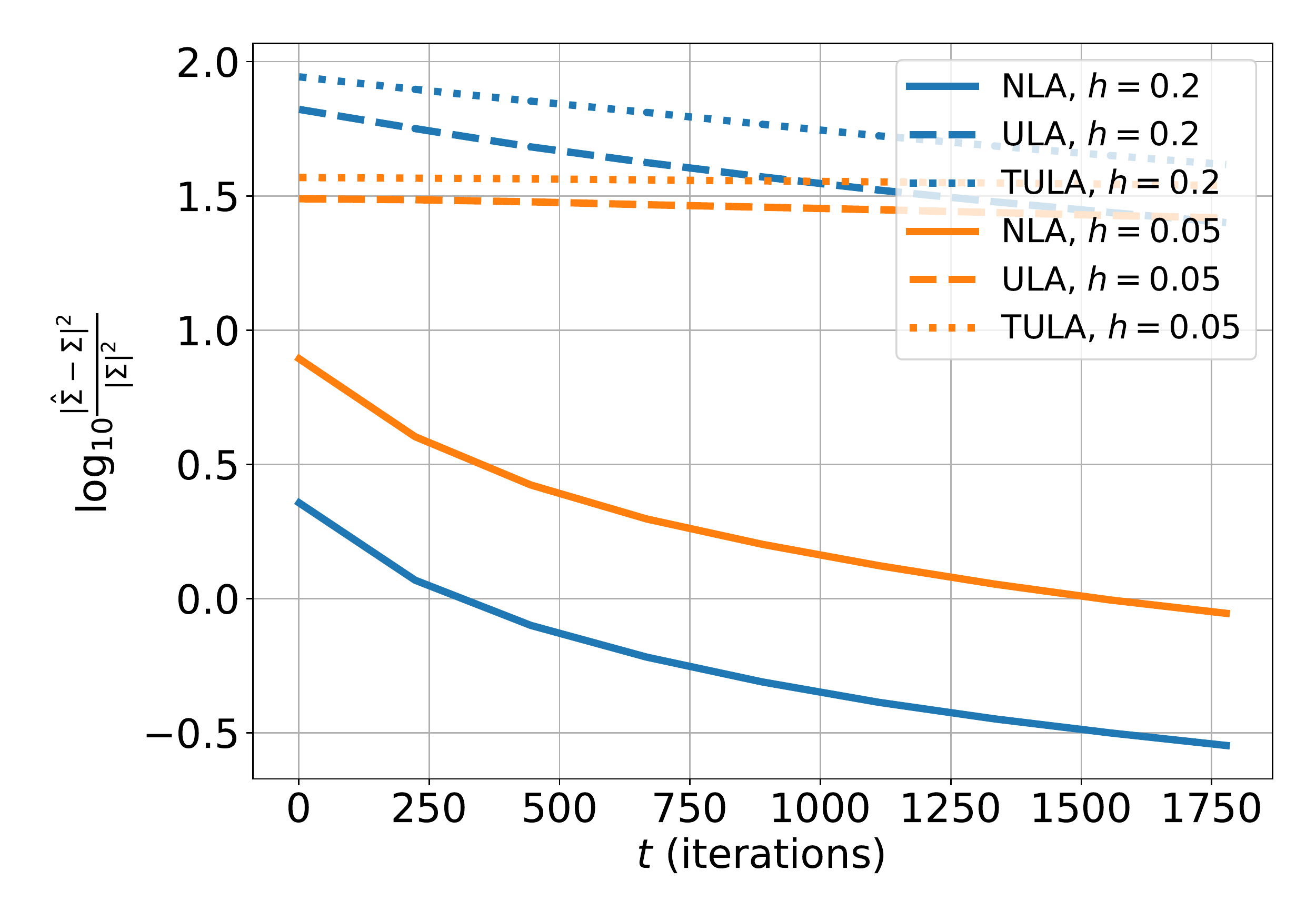}
    \caption{$V(x) = \langle x, \Sigma^{-1} x \rangle^{\sfrac{3}{4}}/2$, $\Sigma = \mathrm{diag}(1, 2,\dots, 100)$. Left: absolute squared error of the mean 0. Right: relative squared error for the scatter matrix $\Sigma$.
    }
    \label{fig:gengauss_mean}
\end{figure}
Figure~\ref{fig:gengauss_mean} compares the performance of \ref{eq:NLA} to that of the Unadjusted Langevin Algorithm (ULA)~\cite{durmus2019high} and of the Tamed Unadjusted Langevin Algorithm (TULA)~\cite{brosse2019tamed}.
We run the algorithms 50 times and compute running estimates for the mean and scatter matrix of the family following~\cite{zhang2013multivariate}.
Convergence is measured in terms of squared distance between means and relative squared distance between scatter matrices, $\| \hat \Sigma - \Sigma \|^2 / \| \Sigma \|^2$. 
\ref{eq:NLA} generates samples that rapidly approximate the true distribution and also displays stability to the choice of the step size.

\section{Open questions}
\label{sec:open}

We conclude this paper by discussing several intriguing directions for future research. In this paper, we focused on giving clean convergence results for the continuous-time diffusions \ref{eq:mld} and \ref{eq:newton}, and we leave open the problem of obtaining discretization error bounds. In discrete time, Newton's method can be unstable, and one uses methods such as damped Newton, Levenburg-Marquardt, or cubic-regularized Newton~\cite{conn2000trust,nesterov2006cubic}; it is an interesting question to develop sampling analogues of these optimization methods. In a different direction, we ask the following question: are there appropriate variants of other popular sampling methods, such as accelerated Langevin~\cite{ma2019there} or Hamiltonian Monte Carlo~\cite{neal2012mcmc}, which also enjoy the scale invariance of \ref{eq:newton}?

\medskip

\noindent\textbf{Acknowledgments}. \\
Philippe Rigollet was supported by NSF awards IIS-1838071, DMS-1712596, DMS-TRIPODS-1740751, and ONR
grant N00014-17- 1-2147.
Sinho Chewi and Austin J.\ Stromme were supported by the Department of
Defense (DoD) through the National Defense Science \& Engineering Graduate Fellowship (NDSEG)
Program.
Thibaut Le Gouic was supported by ONR grant N00014-17-1-2147 and NSF IIS-1838071.
\appendix

\section{Proof of the main convergence result}\label{appendix:markov_semigroup}


The law ${(\mu_t)}_{t\ge 0}$ of~\ref{eq:mld} satisfies the Fokker-Planck equation
\begin{align}\label{eq:fokker_planck_mld}
    \partial_t \mu_t
    &= \divergence\bigl(\mu_t \, {(\nabla^2 \phi)}^{-1} \nabla \ln \frac{\mu_t}{\pi}\bigr).
\end{align}
A unique solution to this equation, with enough regularity to justify our computations below, exists under fairly benign conditions on $\phi$ and $V$, see~\cite[Proposition 6]{lebris2008fokkerplanck}.

As discussed in Section~\ref{sec:mld}, it suffices to prove the convergence result in chi-squared divergence. The convergence results for total variation distance, Hellinger distance, and KL divergence follow from the inequalities~\cite[\S 2.4]{tsybakov2009nonparametric}
\begin{align*}
    2 \norm{\mu - \pi}_{\rm TV}^2,\  H^2(\mu,\pi),\ D_{\rm KL}(\mu\mmid \pi)\le \chi^2(\mu\mmid \pi), \qquad \forall \mu \ll \pi,
\end{align*}
while the convergence in Wasserstein distance follows from~\eqref{eq:liu_transport}.

\begin{proof}[Proof of Theorem~\ref{thm:exp_chi_2}]
    Using the Fokker-Planck equation~\eqref{eq:fokker_planck_mld}, we may compute
    \begin{align*}
        \partial_t \chi^2(\mu_t \mmid \pi)
        &= \partial_t \int \frac{\mu_t^2}{\pi}
        = 2\int \frac{\mu_t}{\pi} \, \partial_t \mu_t
        = 2\int \frac{\mu_t}{\pi} \, \divergence\bigl(\mu_t \, {(\nabla^2 \phi)}^{-1} \nabla \ln \frac{\mu_t}{\pi}\bigr) \\
        &= -2 \int \bigl\langle \nabla \frac{\mu_t}{\pi}, {(\nabla^2 \phi)}^{-1} \nabla \ln \frac{\mu_t}{\pi} \bigr\rangle \, \mu_t
        = -2 \int \bigl\langle \nabla \frac{\mu_t}{\pi}, {(\nabla^2 \phi)}^{-1} \nabla \frac{\mu_t}{\pi} \bigr\rangle \, \pi.
    \end{align*}
    The mirror Poincar\'e inequality~\eqref{eq:mp} implies that this quantity is at most $-2C_{\msf{MP}}^{-1} \chi^2(\mu_t\mmid \pi)$, which completes the proof via Gr\"onwall's inequality.
\end{proof}

We may reinterpret this proof within Markov semigroup theory.

\begin{proof}[Proof of Theorem~\ref{thm:exp_chi_2} from a Markov semigroup perspective]
In this proof, we denote the semigroup of~\ref{eq:mld} by ${(P_t)}_{t\ge 0}$; we refer readers to~\cite{bakry2014markov, vanhandelProbabilityHighDimension2014} for background on Markov semigroup theory.
The Dirichlet form $\mc{E}$ is given by
\[
\mc{E}(f,g)=\int \langle \nabla f,{(\nabla^2\phi)}^{-1}\nabla g\rangle \, \D\pi.
\]
Since it is a self-adjoint semigroup,
we get for all $f \in L^2(\pi)$,
\[
\int P_t\bigl(\frac{\D \mu_0}{\D\pi}\bigr) f\, \D \pi=\int \bigl(\frac{\D\mu_0}{\D\pi}\bigr)P_t f \, \D \pi=\int P_t f \, \D \mu_0=\int  f \, \D \mu_t=\int  \frac{\D \mu_t}{\D\pi}f \,\D\pi\, \,,
\]
so that
\[
P_t\bigl(\frac{\mu_0}{\pi}\bigr)= \frac{\mu_t}{\pi}.
\]
Therefore,
\[
\chi^2(\mu_t\mmid \pi):=\var_\pi\bigl(\frac{\D \mu_t}{\D \pi} \big)=\var_\pi P_t\bigl(\frac{\D \mu_0}{\D \pi}\bigr).
\]
Them, using a classical result of Markov semigroup theory (see for instance~\cite[Theorem 2.1]{cattiaux2009trendtv} or~\cite[Theorem 4.2.5]{bakry2014markov}),
\[
\chi^2(\mu_t \mmid \pi)=\var_\pi P_t\bigl(\frac{\D\mu_0}{\D\pi}\bigr)\le \e^{-\frac{2t}{C}}\var_\pi \bigl(\frac{\D\mu_0}{\D\pi}\bigr)= \e^{-\frac{2t}{C}}\chi^2(\mu_0\mmid \pi)
\]
if and only if the semigroup ${(P_t)}_{t\ge 0}$ satisfies
\begin{equation}\label{eq:poincaresemigroup}
\var_\pi(f)\le C\mathcal{E}(g,g) , \qquad \text{for all }~g \in D(\mc{E}),
\end{equation}
where $\mathcal{E}$ is the Dirichlet form of ${(P_t)}_{t\ge 0}$ with domain $D(\mc{E})$.
To conclude the proof, it suffices to note that \eqref{eq:poincaresemigroup} is precisely our assumption \eqref{eq:mp} with $C=C_{\msf{MP}}$.
\end{proof}


\section{Convergence in 2-Wasserstein distance}\label{scn:wasserstein}

\subsection{Background}

As we have discussed, the proof of Theorem~\ref{thm:exp_chi_2} in Appendix~\ref{appendix:markov_semigroup} implies that for any strictly log-concave target measure, the Newton-Langevin diffusion converges exponentially fast in the following error metrics: chi-squared divergence, KL divergence, Hellinger distance, and total variation distance. We also remark that convergence in R\'enyi divergences can also be proved in this setting, as in~\cite{vempala2019langevin}. On the other hand, we would also like to know if we can obtain convergence results for \emph{optimal transport} distances \cite{villani2003topics}. As a first step, the transportation inequality of~\cite{cordero2017transport},
\begin{align*}
    D_{\rm KL}(\mu \mmid \pi)
    &\ge \mc T_{D_V}(\mu\mmid\pi)
    := \inf\{\E D_V(X\mmid Z) : (X,Z)~\text{is a coupling of}~(\mu,\pi)\},
\end{align*}
which holds for all $\mu \ll \pi$, implies exponentially fast convergence in the asymmetric transportation cost $\mc T_{D_V}$, where $D_V(\cdot\mmid\cdot)$ is the Bregman divergence associated with $V$.

We turn towards the question of convergence in the $2$-Wasserstein distance (denoted $W_2$).
When $\pi$ is strongly log-concave, there is an elegant direct proof of exponential contraction in $W_2$ via a coupling of the Langevin process (see~\cite[Exercise 9.10]{villani2003topics}).
In general, however, convergence in $W_2$ is typically deduced from convergence in KL divergence, with the help of a \emph{transportation-cost inequality}
\begin{align}\label{eq:transportation}
    W_2^2(\mu,\pi)
    &\le C D_{\rm KL}(\mu\mmid \pi).
\end{align}
It has been known since the work of~\cite{otto2000generalization} that a log-Sobolev inequality~\eqref{eq:lsi} with constant $C_{\msf{LSI}}$ implies the validity of~\eqref{eq:transportation} with constant $C = C_{\msf{LSI}}$.
Since an LSI may not always hold or may hold with a poor constant,~\cite{bolleyWeightedCsiszarKullbackPinskerInequalities2005} provides weaker conditions: namely, if there exists $\alpha > 0$ such that
\begin{align}\label{eq:sq_exp_moment}
    \int \exp(\alpha \norm{x-x_0}^2) \, \D \pi(x) < \infty,
\end{align}
then we have the weaker inequality
\begin{align*}
    W_2^2(\mu,\pi)
    &\lesssim D_{\rm KL}(\mu \mmid \pi) + \sqrt{D_{\rm KL}(\mu \mmid \pi)}.
\end{align*}
Therefore, either the validity of an LSI or a square exponential moment suffice to transfer convergence in KL divergence to convergence in $W_2$.
In fact, it turns out that the log-Sobolev inequality~\eqref{eq:lsi}, the transportation inequality~\eqref{eq:transportation}, and the square exponential moment condition~\eqref{eq:sq_exp_moment} are all equivalent for log-concave measures,
and they are in general strictly stronger than the Poincar\'e inequality~\eqref{eq:p}~\cite{bobkovIsoperimetricAnalyticInequalities1999, otto2000generalization, bolleyWeightedCsiszarKullbackPinskerInequalities2005}.

Since Theorem~\ref{thm:exp_chi_2} provides a stronger control, namely in chi-squared divergence rather than in KL divergence, the reader might wonder if a weaker transportation inequality in which the RHS of~\eqref{eq:transportation} is replaced by $C{\chi^2(\mu \mmid \pi)}^{1/p}$ might hold under weaker assumptions. Indeed, the recent works~\cite{ding2015quadratictransport, ledoux2018remarks, liu2020quadratictransport} answer this question positively by showing that the Poincar\'e inequality~\eqref{eq:p} implies the transportation-cost inequality
\begin{align}\label{eq:liu_transport_p}
    W_2^2(\mu,\nu)
    &\le C \inf_{p\ge 1}\{p^2 {\chi^2(\mu\mmid \pi)}^{1/p}\}, \qquad\forall \mu \ll \pi
\end{align}
with constant $C = 2C_{\msf P}$.
In fact, the converse also holds: the validity of~\eqref{eq:liu_transport_p} implies the Poincar\'e inequality~\eqref{eq:p} with constant $C_{\msf P} = C/\sqrt 2$.

If we specialize this result to the case $p=2$, then the Poincar\'e inequality~\eqref{eq:p} implies
\begin{align}\label{eq:liu_transport_1/2}
    W_2^2(\mu,\nu)
    \le 8 C_{\msf P} \sqrt{\chi^2(\mu \mmid \pi)}, \qquad \forall \mu \ll \pi.
\end{align}
In the next section, we give a proof of the inequality~\eqref{eq:liu_transport_1/2} with a slightly worse constant, i.e., with $9$ instead of $8$.


We now briefly describe the method of~\cite{otto2000generalization}, since it is relevant for our approach.
Otto and Villani work in the framework of \emph{Otto calculus}, which interprets~\ref{eq:langevin_sde} as the {gradient flow of the KL divergence in the space of probability measures equipped with the $W_2$ metric}.
As discussed in Section~\ref{sec:mld}, an~\ref{eq:lsi} is a \ref{eq:pl} inequality, which ensures rapid convergence of the gradient flow.
This is then used to deduce the transportation-cost inequality~\eqref{eq:transportation}.

We follow the argument of Otto and Villani, but consider the \emph{gradient flow of the chi-squared divergence} instead of the KL divergence.
We prove
a \L{}ojasiewicz inequality for the chi-squared divergence, and use the gradient flow to deduce~\eqref{eq:liu_transport_1/2} (with a slightly worse constant).

\subsection{Proof of the chi-squared transportation inequality}

Following the proof outline above, we start by proving a PL-type inequality for the chi-squared divergence. Using tools developed in~\cite{ambrosio2008gradient}, it is a standard exercise to establish that the Wasserstein gradient of the functional $\mu \mapsto \chi^2(\mu\mmid \pi)$ is given by $2\nabla(\D \mu/\D \pi)$. Therefore, the right-hand side of the following inequality involves the squared norm of the Wasserstein gradient of the chi-squared divergence, where we use the norm corresponding to the Riemannian structure of Wasserstein space (see~\cite[\S 8]{ambrosio2008gradient}). Note that since the objective is raised to the power $3/2$ on the left-hand side it is not quite a \ref{eq:pl} inequality, and rather it is a form commonly referred to as a {\L}ojasiewicz inequality~\cite{lojasiewicz1963propriete} with parameter $3/4$. 

\begin{prop}
Let $C_{\msf P} \in (0, \infty]$ denote the Poincar\'e constant of $\pi$. Then,
    \begin{align*}
      {\chi^2(\mu \mmid \pi)}^{3/2} \le    \frac{9C_{\msf P}}{4} \int \bigl\lVert \nabla \frac{\D\mu}{\D\pi}\bigr\rVert^2 \, \D \mu, \qquad \forall \mu \ll \pi.
    \end{align*}
\end{prop}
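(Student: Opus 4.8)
The plan is to prove the \L ojasiewicz-type inequality
\[
{\chi^2(\mu \mmid \pi)}^{3/2} \le \frac{9C_{\msf P}}{4} \int \bigl\lVert \nabla \tfrac{\D\mu}{\D\pi}\bigr\rVert^2 \, \D \mu
\]
by reducing it, via Cauchy--Schwarz and the Poincar\'e inequality, to controlling $\chi^2$ against the Dirichlet-type energy with weight $\pi$ rather than $\mu$. Write $h := \D\mu/\D\pi$, so $\int h \, \D\pi = 1$ and $\chi^2(\mu\mmid\pi) = \var_\pi h = \int h^2 \, \D\pi - 1$. The right-hand side of the claimed inequality is $\frac{9C_{\msf P}}{4}\int \norm{\nabla h}^2 h \, \D\pi$. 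The first step is to relate $\int \norm{\nabla h}^2 h \, \D\pi$ to $\int \norm{\nabla(h^{3/2})}^2 \, \D\pi$, since $\nabla(h^{3/2}) = \tfrac32 h^{1/2}\nabla h$ gives $\norm{\nabla(h^{3/2})}^2 = \tfrac94 h \norm{\nabla h}^2$; hence the RHS equals $C_{\msf P}\int \norm{\nabla (h^{3/2})}^2 \, \D\pi$, which by the Poincar\'e inequality \eqref{eq:p} applied to the function $g = h^{3/2}$ is at least $\var_\pi(h^{3/2}) = \int h^3 \, \D\pi - {(\int h^{3/2}\,\D\pi)}^2$.

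So it suffices to show ${\bigl(\int h^2 \, \D\pi - 1\bigr)}^{3/2} \le \int h^3 \, \D\pi - {\bigl(\int h^{3/2}\,\D\pi\bigr)}^2$. The second step handles this purely as an inequality between moments of a nonnegative random variable $H := h(X)$ with $X \sim \pi$ and $\E H = 1$. We must show ${(\E H^2 - 1)}^{3/2} \le \E H^3 - {(\E H^{3/2})}^2$. For the left-hand side, note $\E H^2 - 1 = \var H = \E\bigl[(H-1)^2\bigr]$, and by Jensen or power-mean comparisons one expects $\var H \le$ (something controlled by $\E H^3$). For the right-hand side, $\E H^3 - {(\E H^{3/2})}^2 = \var(H^{3/2})$. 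Thus the target is the clean statement ${(\var H)}^{3/2} \le \var(H^{3/2})$ for nonnegative $H$ with any mean — and in fact this is scale-invariant, so we may normalize $\E H = 1$. This last inequality should follow from a convexity/rearrangement argument: writing $\var(H^{3/2}) = \tfrac12 \E_{H, H'}\bigl[{(H^{3/2} - {H'}^{3/2})}^2\bigr]$ with $H'$ an independent copy, and similarly $\var H = \tfrac12 \E\bigl[{(H - H')}^2\bigr]$, one compares ${|a^{3/2} - b^{3/2}|}$ with ${|a-b|}^{3/2}$ pointwise — the elementary inequality ${|a-b|}^{3/2} \le |a^{3/2} - b^{3/2}|$ for $a, b \ge 0$ (which holds since $t \mapsto t^{3/2}$ is convex and $3/2 > 1$) gives ${(a-b)}^2 \cdot {|a-b|}^{-1/2}$-type bounds; more directly, Jensen's inequality applied to the power $3/2 > 1$ on the nonnegative quantity ${(H-H')}^2$ under the independent-copies expectation yields ${\bigl(\tfrac12\E{(H-H')}^2\bigr)}^{3/2} \le \tfrac12 \E\bigl[{|H-H'|}^{3}\bigr] \cdot (\text{const})$, and then ${|H - H'|}^3 \le {(H^{3/2} - {H'}^{3/2})}^2 \cdot c$ must be checked — this pointwise step is where care is needed with constants.

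\textbf{Main obstacle.} The key step, and the one I expect to require the most care, is the final pointwise/moment inequality relating $\var H$ and $\var(H^{3/2})$ (equivalently, the reduction to ${(\var H)}^{3/2}\lesssim \var(H^{3/2})$ with the right constant). The Poincar\'e step forces the choice of test function $g = h^{3/2}$ rather cleanly, but one must verify $h^{3/2}\in L^2(\pi)$ and that it is locally Lipschitz (which holds under the paper's standing regularity assumptions, or by a truncation/approximation argument), and one must track constants carefully since the statement claims $9$ and the optimal transport literature gets $8$ — so the pointwise comparison ${|a-b|}^{3} \le \bigl|a^{3/2}-b^{3/2}\bigr|^2 \cdot {|a^{3/2}-b^{3/2}|}^{?}$ or whatever elementary bound is used must be tight enough. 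I would also need to confirm that $\chi^2(\mu\mmid\pi) < \infty$ may be assumed (else the inequality is trivial), and that $\int h^{3/2}\,\D\pi < \infty$ follows, e.g. from $\int h^{3/2}\,\D\pi \le {(\int h^2\,\D\pi)}^{3/4} < \infty$ by H\"older. Once these integrability and pointwise-inequality details are settled, the chain Poincar\'e-inequality $\Rightarrow$ moment inequality $\Rightarrow$ claimed bound closes immediately.
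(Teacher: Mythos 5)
Your proposal is correct and its first step is identical to the paper's: apply the Poincar\'e inequality to $g = h^{3/2}$ where $h = \D\mu/\D\pi$, and use $\nabla(h^{3/2}) = \tfrac32 h^{1/2}\nabla h$ to rewrite the mirror Dirichlet form as $\int \norm{\nabla h}^2 h \, \D\pi = \tfrac49 \int \norm{\nabla(h^{3/2})}^2\,\D\pi \ge \tfrac{4}{9 C_{\msf P}}\var_\pi(h^{3/2})$. Both you and the paper then reduce to proving the moment inequality ${(\var_\pi h)}^{3/2} \le \var_\pi(h^{3/2})$ for the nonnegative function $h$, and this is where the routes diverge. The paper's proof is a three-line chain: replace $\var_\pi h$ by $\E_\pi\bigl[\lvert h - c\rvert^2\bigr]$ with the well-chosen center $c = {\E_\pi[h^{3/2}]}^{2/3}$, apply the fact that $x\mapsto x^{2/3}$ is $2/3$-H\"older with constant $1$ to get $\lvert h - c\rvert^2 \le \lvert h^{3/2} - \E_\pi[h^{3/2}]\rvert^{4/3}$, and finish by Jensen with the concave map $x\mapsto x^{2/3}$. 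Your proof instead uses the independent-copies representation $\var H = \tfrac12\E\bigl[{(H-H')}^2\bigr]$ and $\var(H^{3/2}) = \tfrac12\E\bigl[{(H^{3/2}-{H'}^{3/2})}^2\bigr]$, Jensen with the convex map $t\mapsto t^{3/2}$, and the pointwise inequality $\lvert a-b\rvert^{3/2}\le\lvert a^{3/2}-b^{3/2}\rvert$ for $a,b\ge 0$ (which holds because $t\mapsto t^{3/2}$ is superadditive on $\R_+$). This closes with nothing left to check; in fact the chain
\[
{(\var H)}^{3/2} = \frac{1}{2^{3/2}}{\bigl(\E[{(H-H')}^2]\bigr)}^{3/2} \le \frac{1}{2^{3/2}}\E\bigl[\lvert H-H'\rvert^3\bigr] \le \frac{1}{2^{3/2}}\E\bigl[{(H^{3/2}-{H'}^{3/2})}^2\bigr] = \frac{1}{\sqrt 2}\var(H^{3/2})
\]
gives a \emph{better} constant than the paper's: you would obtain prefactor $\tfrac{9 C_{\msf P}}{4\sqrt 2}$ rather than $\tfrac{9 C_{\msf P}}{4}$. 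So the difficulty you flagged — whether the pointwise power comparison is tight enough — resolves favourably: the comparison holds with constant $1$ and Jensen costs nothing. Your side remarks on integrability (H\"older gives $h^{3/2}\in L^1(\pi)$ from $h\in L^2(\pi)$; the claim is vacuous when $\chi^2 = \infty$) are also correct and consistent with the standing assumptions.
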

\begin{proof}
Using the Poincar\'e inequality~\eqref{eq:p},
we obtain
\begin{align*}
    \int \bigl\lVert \nabla \frac{\D\mu}{\D\pi} \bigr\rVert^2 \, \D \mu
    &= \int \bigl\lVert \nabla \frac{\D\mu}{\D\pi} \bigr\rVert^2 \, \frac{\D\mu}{\D \pi} \, \D \pi
    = \frac{4}{9} \int \bigl\lVert \nabla \bigl( \frac{\D\mu}{\D\pi} \bigr)^{3/2} \bigr\rVert^2 \, \D \pi
    \ge \frac{4}{9C_{\msf P}} \var_\pi\bigl(\bigl( \frac{\D\mu}{\D\pi} \bigr)^{3/2}\bigr).
\end{align*}
In the following steps, we apply the following: (1) $\var X \le \E[\abs{X-c}^2]$ for any $c \in \R$; (2) $x\mapsto x^{2/3}$ is $2/3$-H\"older continuous with unit constant; (3) Jensen's inequality.
\begin{align*}
    \chi^2(\mu \mmid \pi)
    = \var_\pi \bigl(\frac{\D\mu}{\D\pi}\bigr)
    &\overset{(1)}{\le} \E_\pi\Bigl[\Bigl\lvert \frac{\D\mu}{\D\pi} - \E_\pi\bigl[\bigl( \frac{\D\mu}{\D\pi} \bigr)^{3/2}\bigr]^{2/3} \Bigr\rvert^2\Bigr]\\
    &\overset{(2)}{\le} \E_\pi\Bigl[\Bigl\lvert\bigl( \frac{\D\mu}{\D\pi} \bigr)^{3/2} - \E_\pi\bigl[\bigl( \frac{\D\mu}{\D\pi} \bigr)^{3/2} \bigr] \Bigr\rvert^{4/3}\Bigr] \\
    &\overset{(3)}{\le} {\E_\pi\Bigl[\Bigl\lvert\bigl( \frac{\D\mu}{\D\pi} \bigr)^{3/2} - \E_\pi\bigl[\bigl( \frac{\D\mu}{\D\pi} \bigr)^{3/2} \bigr] \Bigr\rvert^2\Bigr]}^{2/3}
    = \Bigl(\var_\pi\bigl( \bigl(\frac{\D\mu}{\D\pi} \bigr)^{3/2}\bigr)\Bigr)^{2/3}.
\end{align*}
This proves the result.
\end{proof}

\begin{thm}\label{thm:Poincare2chi2transport}
Suppose ${\chi^2(\cdot \mmid \pi)}$ satisfies the following {\L}ojasiewicz
inequality:
\begin{equation}\label{eq:hypqPL}
    {\chi^2(\mu\mmid \pi)}^{2/q}\le 4C_{\msf{PL}} \E_\mu\bigl[\bigl\lVert \nabla\frac{\D\mu}{\D\pi}\bigr\rVert^2\bigr], \qquad \forall \mu \ll \pi,
\end{equation}
for some $q \in (1,\infty)$.
Then, $\pi$ satisfies the chi-squared transportation inequality
\[
    W_2^2(\mu,\pi)\le p^2 C_{\msf{PL}} \, {\chi^2(\mu \mmid \pi)}^{2/p}, \qquad \forall \mu \ll \pi,
\]
where $1/p + 1/q = 1$.
\end{thm}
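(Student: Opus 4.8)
The plan is to follow the Otto--Villani scheme adapted to the chi-squared divergence, exactly as advertised in the introduction to this subsection. Fix $\mu \ll \pi$ and consider the gradient flow $(\mu_t)_{t \ge 0}$ of the functional $F(\cdot) = \chi^2(\cdot \mmid \pi)$ in the Wasserstein space, started at $\mu_0 = \mu$; this is precisely the Langevin semigroup, so $\mu_t \to \pi$ as $t \to \infty$, with $F(\mu_t) \downarrow 0$. Along this flow, the energy dissipation identity reads $\partial_t F(\mu_t) = -\E_{\mu_t}[\lVert \nabla (\D\mu_t/\D\pi)\rVert^2] \cdot (\text{const})$ — more precisely, since the Wasserstein gradient of $F$ is $2\nabla(\D\mu_t/\D\pi)$, we have $\partial_t F(\mu_t) = -\tfrac14 \lvert \nabla_{W_2} F(\mu_t)\rvert^2$ where $\lvert \nabla_{W_2} F(\mu_t)\rvert^2 = 4\,\E_{\mu_t}[\lVert \nabla(\D\mu_t/\D\pi)\rVert^2]$, and simultaneously $\lvert \partial_t W_2(\mu_t, \pi)\rvert \le \lvert \dot\mu_t\rvert_{W_2} = \lvert \nabla_{W_2} F(\mu_t)\rvert$ by the definition of a gradient flow and the metric derivative bound.

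The key computation is then to bound $W_2(\mu, \pi)$ by integrating the metric speed and using the {\L}ojasiewicz inequality~\eqref{eq:hypqPL} to trade speed for energy decay. Concretely, write $G(t) = F(\mu_t)$; then $\lvert \nabla_{W_2} F(\mu_t)\rvert = \sqrt{-4 G'(t)}$ (using the dissipation identity), while~\eqref{eq:hypqPL} gives $G(t)^{2/q} \le C_{\msf{PL}} \lvert \nabla_{W_2} F(\mu_t)\rvert^2 = -4 C_{\msf{PL}} G'(t)$, i.e.\ $\lvert \nabla_{W_2} F(\mu_t)\rvert = \sqrt{-4G'(t)} \ge G(t)^{1/q}/\sqrt{C_{\msf{PL}}}$. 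Hence
\[
W_2(\mu, \pi) \le \int_0^\infty \lvert \nabla_{W_2} F(\mu_t)\rvert \, \D t = \int_0^\infty \sqrt{-4 G'(t)} \, \D t.
\]
Now I would use Cauchy--Schwarz in the form $\sqrt{-G'} = \sqrt{-G'}\cdot G^{-1/q}\cdot G^{1/q}$ and the elementary identity $-G'(t) G(t)^{-2/q} = \tfrac{q}{q-2}\,\bigl(-\tfrac{\D}{\D t} G(t)^{1-2/q}\bigr)$ when $q\ne 2$ (and the logarithmic variant when $q=2$); pairing $\sqrt{-G'}\,G^{-1/q} \in L^2$ against $G^{1/q}$ via the {\L}ojasiewicz bound $G^{1/q} \le \sqrt{C_{\msf{PL}}}\,\sqrt{-4G'}^{?}$... the cleanest route is: by Cauchy--Schwarz on $[0,\infty)$,
\[
\int_0^\infty \sqrt{-G'(t)}\,\D t = \int_0^\infty \frac{\sqrt{-G'(t)}}{G(t)^{1/q}} \cdot G(t)^{1/q} \, \D t \le \Bigl(\int_0^\infty \frac{-G'(t)}{G(t)^{2/q}}\,\D t\Bigr)^{1/2} \Bigl(\int_0^\infty G(t)^{2/q}\,\D t\Bigr)^{1/2},
\]
and on the second factor apply~\eqref{eq:hypqPL} to get $\int_0^\infty G(t)^{2/q} \le 4C_{\msf{PL}}\int_0^\infty \E_{\mu_t}[\lVert\nabla(\D\mu_t/\D\pi)\rVert^2] = -4C_{\msf{PL}}\int_0^\infty G'(t)\,\D t / (\text{const}) = C_{\msf{PL}}\,G(0)/(\text{const})$ — i.e.\ it telescopes via the dissipation identity — while the first factor integrates explicitly to $\tfrac{q}{q-2} G(0)^{1-2/q}$ (for $q<2$ one should double-check the sign and convergence; note $1/p+1/q=1$ forces $q>1$, and the relevant case $q<2 \Leftrightarrow p>2$ is exactly the regime of the Proposition where $q=4/3$). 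Combining the two factors, optimizing the constants, and recalling $1-2/q = -1 + 2/p$ together with $2/p + 2/q = 2$, one collects the exponent $2/q + (1-2/q) = 2/p$ on $\chi^2(\mu\mmid\pi)$ and the stated constant $p^2 C_{\msf{PL}}$.

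The main obstacle is not the algebra but the analytic justification: one must argue that the Wasserstein gradient flow of $\chi^2(\cdot\mmid\pi)$ is well-defined, is a curve of maximal slope, satisfies the energy dissipation \emph{equality} (not just inequality), converges to $\pi$, and that all the integrals above are finite (in particular near $t=\infty$, where $G(t)\to 0$ and $G(t)^{-2/q}$ blows up — this is why the regime $q<2$ matters, as then $1-2/q<0$ and $\int^\infty -G' G^{-2/q}$ converges iff $G$ decays, which must be verified, e.g.\ via the Poincaré inequality forcing exponential decay of $\chi^2$). I would handle this either by citing the abstract gradient-flow machinery of~\cite{ambrosio2008gradient} (for the existence, energy identity, and maximal-slope characterization) plus the exponential-decay estimate from Theorem~\ref{thm:exp_chi_2} specialized to $\phi=\lVert\cdot\rVert^2/2$ to control the tail, or by first proving the inequality for nice $\mu$ (bounded density bounded away from $0$, compact support) where everything is classical, and then extending by approximation and lower semicontinuity of $W_2$ and $\chi^2$. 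Given the paper's stated willingness to work formally at the level of diffusions, I expect the write-up to invoke~\cite{ambrosio2008gradient} for the flow properties and keep the core argument the clean Cauchy--Schwarz computation above.
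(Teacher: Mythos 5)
Your high-level plan --- follow Otto and Villani by running the $W_2$-gradient flow of $\chi^2(\cdot\mmid\pi)$, bound $W_2(\mu,\pi)$ by the length of this curve, and trade flow speed for energy decay via the \L{}ojasiewicz hypothesis --- is exactly the paper's strategy, and the technical points you flag (existence and regularity of the flow via~\cite{ambrosio2008gradient}, convergence to $\pi$) are handled by the paper in the same spirit you suggest. The gap is in how you execute the core estimate.

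Writing $G(t) := \chi^2(\mu_t\mmid\pi)$, so that the metric speed of the flow is $\sqrt{-G'(t)}$, your Cauchy--Schwarz split
\[
\int_0^\infty \sqrt{-G'(t)}\,\D t \le \Bigl(\int_0^\infty \frac{-G'(t)}{G(t)^{2/q}}\,\D t\Bigr)^{1/2} \Bigl(\int_0^\infty G(t)^{2/q}\,\D t\Bigr)^{1/2}
\]
does not work in the regime that matters. After the substitution $u = G(t)$, the first factor equals $\int_0^{G(0)} u^{-2/q}\,\D u$, which converges only when $2/q < 1$, i.e.\ $q > 2$. In the paper's application to the preceding Proposition the exponent is $2/q = 3/2$, so $q = 4/3 < 2$ and the first factor is $+\infty$. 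Your parenthetical that exponential decay of $G$ rescues convergence has the asymptotics reversed: if $G(t) \asymp \e^{-ct}$ then $-G'(t)\,G(t)^{-2/q} \asymp \e^{(2/q - 1)ct}$, which \emph{grows} exponentially when $q < 2$, so the divergence is genuine. (Even in the regime $q > 2$, where the split does converge, the constant $\tfrac{q}{q-2}\,C_{\msf{PL}} = \tfrac{p}{2-p}\,C_{\msf{PL}}$ that it produces is not the claimed $p^2 C_{\msf{PL}}$, so the argument loses the stated bound even then. There are also some constant-tracking slips, e.g.\ the metric slope is $\sqrt{-G'(t)}$, not $\sqrt{-4G'(t)}$.)

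The fix is not to split at all. Hypothesis~\eqref{eq:hypqPL} applied along the flow, combined with the dissipation identity $-G'(t) = 4\,\E_{\mu_t}\bigl[\lVert\nabla(\D\mu_t/\D\pi)\rVert^2\bigr]$, gives the pointwise bound $G(t)^{2/q} \le C_{\msf{PL}}\,(-G'(t))$, equivalently $\sqrt{-G'(t)} = \tfrac{-G'(t)}{\sqrt{-G'(t)}} \le \sqrt{C_{\msf{PL}}}\,\tfrac{-G'(t)}{G(t)^{1/q}}$. Integrating and substituting $u = G(t)$,
\[
W_2(\mu,\pi) \le \int_0^\infty \sqrt{-G'(t)}\,\D t \le \sqrt{C_{\msf{PL}}}\int_0^{G(0)} u^{-1/q}\,\D u = p\,\sqrt{C_{\msf{PL}}}\, G(0)^{1/p},
\]
which converges for every $q > 1$ and squares to exactly $p^2 C_{\msf{PL}}\,\chi^2(\mu\mmid\pi)^{2/p}$. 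This is precisely what the paper records as the monotonicity of the Lyapunov function $g(t) = W_2(\mu_0,\mu_t) + p\sqrt{C_{\msf{PL}}}\,\chi^2(\mu_t\mmid\pi)^{1/p}$: the inequality $g'(t)\le 0$ is the un-integrated form of the pointwise bound above, together with $\partial_t W_2(\mu_0,\mu_t) \le 2\,\E_{\mu_t}\lVert\nabla(\D\mu_t/\D\pi)\rVert \le \sqrt{-G'(t)}$ via Cauchy--Schwarz and Jensen.
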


\begin{proof}
The proof follows~\cite{otto2000generalization}.
Take a path ${(\mu_t)}_{t\ge 0}$ starting at some $\mu_0=\mu$ and following the $W_2$ gradient flow of the chi-squared divergence $\chi^2(\cdot\mmid \pi)$, that is,
\begin{align*}
    \partial_t \mu_t
    &= 2 \divergence\bigl(\mu_t \nabla \frac{\mu_t}{\pi}\bigr).
\end{align*}
The existence of this gradient flow and the regularity required for the following computations can be justified by~\cite{ohta2011generalizedentropies, ohta2013generalizedentropiesii} and~\cite[Theorem 11.2.1]{ambrosio2008gradient}.
Denote by $T_t$ the optimal transport map sending $\mu_t$ to $\mu_0$. Then, the time derivative of the squared Wasserstein distance can be computed as in~\cite[Corollary 10.2.7]{ambrosio2008gradient} to be
\begin{align*}
    \partial_t W_2^2(\mu_0,\mu_t)
    &= - 4\E_{\mu_t}\bigl\langle \nabla\frac{\mu_t}{\pi},T_t-\id\bigr\rangle
    \le 4 W_2(\mu_0,\mu_t) \E_{\mu_t} \bigr\lVert \nabla\frac{\mu_t}{\pi}\bigr\rVert\,,
\end{align*}
where we apply the Cauchy-Schwarz and Jensen inequalities.
It yields
\[
\partial_t W_2(\mu_0,\mu_t)\le 2 \E_{\mu_t} \bigl\lVert \nabla\frac{\mu_t}{\pi} \bigr\rVert.
\]
Also, the chi-squared divergence satisfies
\begin{align*}
\partial_t\chi^2(\mu_t \mmid \pi)&=-4\E_{\mu_t}\bigl[\bigr\lVert \nabla\frac{\mu_t}{\pi}\bigr\rVert^2\bigr].
\end{align*}
Using the assumption \eqref{eq:hypqPL},
\begin{align*}
\partial_t [{\chi^2(\mu_t \mmid \pi)}^{1/p}]
&=\frac{\partial_t\chi^2(\mu_t \mmid \pi)}{p {\chi^2(\mu_t \mmid \pi)}^{1/q}}
= - \frac{4}{p{\chi^2(\mu_t \mmid \pi)}^{1/q}} \E_{\mu_t}\bigl[\bigr\lVert \nabla\frac{\mu_t}{\pi}\bigr\rVert^2\bigr]
\le - \frac{2}{p \sqrt{C_{\msf{PL}}}} \E_{\mu_t}\bigl\lVert \nabla \frac{\mu_t}{\pi}\bigr\rVert.
\end{align*}
If we define 
\[
g(t):=W_2(\mu_0,\mu_t)+ p\sqrt{C_{\msf{PL}}} \, {\chi^2(\mu_t \mmid \pi)}^{1/p},
\]
we have proved that 
\[
g' \le 0.
\]
Since $g(0)= p\sqrt{C_{\msf{PL}}} \, {\chi^2(\mu_0 \mmid \pi)}^{1/p}$ and $\lim_{t\to\infty}g(t)=W_{2}(\mu,\pi)$, we have shown a transport inequality
\begin{align*}
W_2^2(\mu,\pi)
&\le p^2 C_{\msf{PL}} \, {\chi^2(\mu \mmid \pi)}^{2/p}.
\end{align*}
\end{proof}

\begin{thm}
Let $\pi$ be a distribution on $\R^d$ with finite Poincar\'e constant $C_{\msf{P}} > 0$. Then for any measure $\mu \in \mathcal{P}_2(\R^d)$, it holds
$$
 W_2^2(\mu,\pi) \le 9C_{\msf P} \sqrt{\chi^2(\mu \mmid \pi)} \,.
$$
\end{thm}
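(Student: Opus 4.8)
The plan is to chain the two preceding results: the {\L}ojasiewicz-type inequality of the Proposition and the abstract transportation bound of Theorem~\ref{thm:Poincare2chi2transport}. First I would dispose of the trivial case: if $\chi^2(\mu\mmid\pi)=\infty$ the claimed inequality holds vacuously, so we may assume $\mu\ll\pi$ with $\chi^2(\mu\mmid\pi)<\infty$. This is exactly the regime in which the Proposition applies, and the standing hypothesis $\mu\in\mathcal P_2(\R^d)$ is precisely what is needed to run the $W_2$ gradient-flow construction underlying Theorem~\ref{thm:Poincare2chi2transport}.

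Next I would read off from the Proposition that $\pi$ verifies hypothesis~\eqref{eq:hypqPL}. Matching
\[
{\chi^2(\mu\mmid\pi)}^{3/2}\le\frac{9C_{\msf P}}{4}\int\bigl\lVert\nabla\tfrac{\D\mu}{\D\pi}\bigr\rVert^2\,\D\mu
\]
against ${\chi^2(\mu\mmid\pi)}^{2/q}\le 4C_{\msf{PL}}\,\E_\mu[\lVert\nabla(\D\mu/\D\pi)\rVert^2]$ forces $2/q=3/2$, i.e.\ $q=4/3$, and $4C_{\msf{PL}}=9C_{\msf P}/4$, i.e.\ $C_{\msf{PL}}=9C_{\msf P}/16$. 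The conjugate exponent is then $p=4$, since $1/p=1-3/4$.

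Finally I would invoke Theorem~\ref{thm:Poincare2chi2transport} with these parameters to obtain
\[
W_2^2(\mu,\pi)\le p^2 C_{\msf{PL}}\,{\chi^2(\mu\mmid\pi)}^{2/p}=16\cdot\frac{9C_{\msf P}}{16}\cdot\sqrt{\chi^2(\mu\mmid\pi)}=9C_{\msf P}\sqrt{\chi^2(\mu\mmid\pi)}\,,
\]
which is the asserted bound. Since every step is a direct substitution into results already proved, there is no genuine obstacle here; the only thing demanding care is the bookkeeping of the exponents and constants (and the observation that the sub-optimal constant $9$, rather than the $8$ of~\eqref{eq:liu_transport_1/2}, is the price paid for the $3/2$-power {\L}ojasiewicz inequality coming out of the Cauchy--Schwarz/H\"older/Jensen chain in the Proposition).
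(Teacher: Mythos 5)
Your proposal is correct and follows exactly the route the paper takes: the paper's proof is simply the one-line statement that the result ``follows immediately from the two preceding results,'' and your bookkeeping ($q=4/3$, $p=4$, $C_{\msf{PL}}=9C_{\msf P}/16$, so $p^2C_{\msf{PL}}=9C_{\msf P}$) spells out precisely that substitution.
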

\begin{proof}
    The inequality follows immediately from the two preceding results.
\end{proof}

\begin{rmk}
    Transportation-cost inequalities for R\'enyi divergences were also studied in~\cite{ding2014divergencetransport, bobkovding2015renyitransport}.
\end{rmk}

\section{Additional choices for the mirror map}
\label{app:ext}

We extend our results to other choices of the mirror map $\phi$ that serve as proxies for $V$ and that also lead to exponential convergence of~\ref{eq:mld}. 

The first result below is useful in situations when there exists a strictly convex mirror map $\phi$ such $\nabla \phi$ is easier to invert than $\nabla V$. It ensures exponential ergodicity of \eqref{eq:mld} when $\nabla^2 V$ dominates $\nabla^2 \phi$ in the sense of the Loewner order.

\begin{cor}\label{cor:potential_dominates_mirror}
    Suppose that $\pi$ is strictly log-concave and that $\nabla^2 \phi \preceq C\nabla^2 V$, where $\preceq$ denotes the Loewner order. Then, the law ${(\mu_t)}_{t\ge 0}$ of~\ref{eq:mld} satisfies
    \begin{align*}
    2 \norm{\mu_t - \pi}_{\rm TV}^2,\ H^2(\mu_t,\pi),\ D_{\rm KL}(\mu_t \mmid \pi),\ \chi^2(\mu_t\mmid \mu),\ \frac{1}{2 C_{\msf P}} W_2^2(\mu_t,\pi) \le \e^{-\frac{2t}{C}} \chi^2(\mu_0 \mmid \pi).
    \end{align*}
\end{cor}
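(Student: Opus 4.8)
The plan is to reduce the statement to Theorem~\ref{thm:exp_chi_2} by verifying that $\pi$ satisfies the mirror Poincar\'e inequality~\eqref{eq:mp} with constant $C_{\msf{MP}} = C$. The starting point is the Brascamp--Lieb inequality, which (as recalled in Section~\ref{scn:newton_conv}) asserts that any strictly log-concave $\pi = \e^{-V}$ satisfies
\[
\var_\pi g \le \E_\pi \langle \nabla g, {(\nabla^2 V)}^{-1} \nabla g \rangle, \qquad \text{for all locally Lipschitz } g \in L^2(\pi).
\]
This is precisely the mirror Poincar\'e inequality associated with the mirror map $V$ itself, with constant $1$.

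Next I would transfer this to the mirror map $\phi$ using the hypothesis $\nabla^2 \phi \preceq C \nabla^2 V$. Since $\pi$ is strictly log-concave we have $\nabla^2 V(x) \succ 0$, and since $\phi$ is a mirror map we have $\nabla^2 \phi(x) \succ 0$, for every $x$; hence matrix inversion is order-reversing on these positive definite matrices and the Loewner bound becomes ${(\nabla^2 V(x))}^{-1} \preceq C\,{(\nabla^2 \phi(x))}^{-1}$ pointwise. Consequently, for every $x$,
\[
\langle \nabla g(x), {(\nabla^2 V(x))}^{-1} \nabla g(x) \rangle \le C \, \langle \nabla g(x), {(\nabla^2 \phi(x))}^{-1} \nabla g(x) \rangle,
\]
and integrating against $\pi$ and combining with Brascamp--Lieb yields
\[
\var_\pi g \le C \, \E_\pi \langle \nabla g, {(\nabla^2 \phi)}^{-1} \nabla g \rangle,
\]
which is exactly~\eqref{eq:mp} with $C_{\msf{MP}} = C$. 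Applying Theorem~\ref{thm:exp_chi_2} then gives the claimed exponential decay of $\chi^2(\mu_t \mmid \pi)$, hence of $2\norm{\mu_t-\pi}_{\rm TV}^2$, $H^2(\mu_t,\pi)$, and $D_{\rm KL}(\mu_t \mmid \pi)$ via the comparison inequalities, and of $W_2^2(\mu_t,\pi)/(2C_{\msf P})$ via~\eqref{eq:liu_transport} (vacuous if $C_{\msf P} = \infty$).

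I do not anticipate a serious obstacle here: the only points requiring a little care are the positivity of $\nabla^2 V$ and $\nabla^2 \phi$ (needed to justify the order-reversal of inversion), and the observation that the Loewner comparison is pointwise in $x$ so that it can be inserted inside the integral before invoking Brascamp--Lieb. The "hard part," such as it is, is simply recognizing that the Brascamp--Lieb inequality is the $\phi = V$, $C_{\msf{MP}} = 1$ instance of~\eqref{eq:mp}, after which the Loewner domination propagates the inequality to the chosen proxy mirror map at the cost of the factor $C$.
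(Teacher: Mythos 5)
Your proof is correct and takes essentially the same route as the paper: combine the Brascamp--Lieb inequality (the mirror Poincar\'e inequality for $\phi = V$ with constant $1$) with the order-reversal of matrix inversion under the hypothesis $\nabla^2 \phi \preceq C \nabla^2 V$ to verify~\eqref{eq:mp} with $C_{\msf{MP}} = C$, then invoke Theorem~\ref{thm:exp_chi_2}. The paper states the chain of inequalities more tersely but relies on exactly the same two observations.
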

\begin{proof}
    The assumption implies
    \begin{align*}
        C\E_\pi\langle \nabla f, {(\nabla^2 \phi)}^{-1} \nabla f \rangle
        &\ge \E_\pi\langle \nabla f, {(\nabla^2 V)}^{-1} \nabla f \rangle
        \ge \var_\pi f,
    \end{align*}
    where again we apply the Brascamp-Lieb inequality.
    This verifies~\eqref{eq:mp} with constant $C_{\msf{MP}} = C$.
\end{proof}

Our second result does not require $\pi$ to be log-concave but only that it is close to a strictly log-concave distribution $\widetilde \pi$ in the following sense: the density of $\pi$ with respect to $\widetilde \pi$ is uniformly bounded away from $0$ and $\infty$.

\begin{cor}\label{cor:bounded_perturbation}
    Suppose that $\widetilde \pi = \exp(-\widetilde V)$ is strictly log-concave and suppose that $\pi$ has density $\rho$ w.r.t.\ $\widetilde \pi$. Let $M := (\sup \rho)/(\inf \rho)$. Then, the law ${(\mu_t)}_{t\ge 0}$ of~\ref{eq:mld} with mirror map $\phi = \widetilde V$ and target density $\pi$ satisfies
    \begin{align*}
     2 \norm{\mu_t - \pi}_{\rm TV}^2,\ H^2(\mu_t,\pi),\ D_{\rm KL}(\mu_t \mmid \pi),\ \chi^2(\mu_t\mmid \mu),\ \frac{1}{2C_{\msf P} M} W_2^2(\mu_t,\pi) \le \e^{-\frac{2t}{M}} \chi^2(\mu_0 \mmid \pi),
    \end{align*}
    where $C_{\msf P}$ is the Poincar\'e constant of $\widetilde \pi$.
\end{cor}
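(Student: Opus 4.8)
The plan is to verify the mirror Poincaré inequality~\eqref{eq:mp} for the target $\pi$ with mirror map $\phi = \widetilde V$, and then invoke Theorem~\ref{thm:exp_chi_2}. The starting point is that $\widetilde\pi$, being strictly log-concave, satisfies the Brascamp--Lieb inequality, which is precisely the mirror Poincaré inequality for $\widetilde\pi$ with mirror map $\widetilde V$ and constant $1$: for every locally Lipschitz $f \in L^2(\widetilde\pi)$,
\[
\var_{\widetilde\pi} f \le \E_{\widetilde\pi}\langle \nabla f, {(\nabla^2 \widetilde V)}^{-1} \nabla f\rangle.
\]
The goal is to transfer this to $\pi$, paying a factor that is controlled by $M = (\sup\rho)/(\inf\rho)$, where $\rho = \D\pi/\D\widetilde\pi$.

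The first step is the standard observation that the Dirichlet-form side is comparable under a bounded density: since $\inf\rho \le \rho \le \sup\rho$ pointwise, for any $f$,
\[
\E_\pi\langle \nabla f, {(\nabla^2 \widetilde V)}^{-1}\nabla f\rangle
= \int \langle \nabla f, {(\nabla^2\widetilde V)}^{-1}\nabla f\rangle\, \rho \,\D\widetilde\pi
\ge (\inf\rho)\, \E_{\widetilde\pi}\langle \nabla f, {(\nabla^2\widetilde V)}^{-1}\nabla f\rangle,
\]
using that the integrand is nonnegative because $\nabla^2\widetilde V \succ 0$. The second step is to lower-bound $\var_{\widetilde\pi} f$ by a multiple of $\var_\pi f$. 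Here I would use the variational characterization $\var_\nu f = \inf_{c\in\R}\int (f-c)^2\,\D\nu$; picking $c = \E_\pi f$ gives
\[
\var_{\widetilde\pi} f \le \int (f - \E_\pi f)^2\,\D\widetilde\pi
= \int (f-\E_\pi f)^2 \rho^{-1}\,\D\pi
\ge \frac{1}{\sup\rho}\int (f-\E_\pi f)^2\,\D\pi = \frac{1}{\sup\rho}\,\var_\pi f,
\]
wait — that inequality points the wrong way; instead one should bound $\var_\pi f = \int(f-\E_\pi f)^2\rho^{-1}\cdot\rho\,\D\widetilde\pi \le (\sup\rho)\int (f-\E_\pi f)^2\,\D\widetilde\pi$ and then note $\int(f-\E_\pi f)^2\,\D\widetilde\pi \ge \var_{\widetilde\pi}f$ is false as well, so the clean route is: $\var_{\widetilde\pi}f \ge$ is not what we want — rather chain $\var_\pi f \le (\sup\rho)\,\E_{\widetilde\pi}[(f-\E_{\widetilde\pi}f)^2]$ after recentering, i.e. $\var_\pi f = \inf_c \int(f-c)^2\rho\,\D\widetilde\pi \le (\sup\rho)\inf_c\int(f-c)^2\,\D\widetilde\pi = (\sup\rho)\var_{\widetilde\pi}f$. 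Combining the two steps with Brascamp--Lieb gives
\[
\var_\pi f \le (\sup\rho)\,\var_{\widetilde\pi}f \le (\sup\rho)\,\E_{\widetilde\pi}\langle\nabla f,{(\nabla^2\widetilde V)}^{-1}\nabla f\rangle \le \frac{\sup\rho}{\inf\rho}\,\E_\pi\langle\nabla f,{(\nabla^2\widetilde V)}^{-1}\nabla f\rangle = M\,\E_\pi\langle\nabla f,{(\nabla^2\widetilde V)}^{-1}\nabla f\rangle,
\]
which is exactly~\eqref{eq:mp} with $\phi = \widetilde V$ and $C_{\msf{MP}} = M$.

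With~\eqref{eq:mp} verified, Theorem~\ref{thm:exp_chi_2} immediately delivers $\chi^2(\mu_t\mmid\pi)$, $D_{\rm KL}(\mu_t\mmid\pi)$, $H^2(\mu_t,\pi)$ and $2\norm{\mu_t-\pi}_{\rm TV}^2 \le \e^{-2t/M}\chi^2(\mu_0\mmid\pi)$. For the Wasserstein bound we need a Poincaré constant for $\pi$ itself: the same bounded-density comparison (applied now with mirror map $\|\cdot\|^2/2$, i.e. comparing the classical Dirichlet forms, together with the fact that $\widetilde\pi$ has Poincaré constant $C_{\msf P}$) shows $\pi$ has Poincaré constant at most $C_{\msf P}M$, so~\eqref{eq:liu_transport} gives $W_2^2(\mu_t,\pi) \le 2C_{\msf P}M\,\chi^2(\mu_t\mmid\pi)$, which is the claimed last term. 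I expect no real obstacle here; the only point requiring a little care is getting the direction of the variance comparison right (the recentering trick via $\var_\nu f = \inf_c \int(f-c)^2\,\D\nu$ handles it cleanly), and noting that strict log-concavity of $\widetilde\pi$ is what makes both the Brascamp--Lieb inequality available and $\nabla^2\widetilde V$ invertible so that the mirror-Langevin diffusion with $\phi=\widetilde V$ is well-defined.
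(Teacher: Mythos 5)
Your proof is correct and matches the paper's argument essentially step for step: both bound the mirror Dirichlet form under $\pi$ from below by $(\inf\rho)$ times that under $\widetilde\pi$, compare variances via the infimum characterization $\var_\nu f = \inf_c \int(f-c)^2\,\D\nu$ to get a $\sup\rho$ factor, insert the Brascamp--Lieb inequality for $\widetilde\pi$ in the middle, and conclude that~\eqref{eq:mp} holds for $\pi$ with $C_{\msf{MP}}=M$. The Wasserstein conclusion by the same bounded-perturbation comparison for the classical Poincar\'e constant is also exactly what the paper does; the brief wrong turn in your variance comparison is corrected in place, and the final chain is the right one.
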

\begin{proof}
    It is standard that the Poincar\'e inequality~\eqref{eq:p}, and the mirror Poincar\'e inequality~\eqref{eq:mp}, are stable under bounded perturbations of the measure. It implies that $\pi$ satisfies a Poincar\'e inequality with constant $C_{\msf P} M$, and a mirror Poincar\'e inequality with constant $M$. We prove the latter statement for completeness; for the former statement, see~\cite[Problem 3.20]{vanhandelProbabilityHighDimension2014}.
    
    Observe that
    \begin{align*}
        \int \langle \nabla f, {(\nabla^2 \widetilde V)}^{-1} \nabla f \rangle \, \D\pi
        &= \int \langle \nabla f, {(\nabla^2 \widetilde V)}^{-1} \nabla f \rangle \, \frac{\D\pi}{\D\widetilde \pi} \, \D\widetilde \pi
        \ge (\inf \rho) \int \langle \nabla f, {(\nabla^2 \widetilde V)}^{-1} \nabla f \rangle \, \D\widetilde \pi
    \end{align*}
    and
    \begin{align*}
        \var_{\widetilde \pi} f
        &= \inf_{m\in\R^d} \int \|f - m\|^2 \, \D\widetilde \pi
        = \inf_{m\in\R^d} \int \|f - m\|^2 \, \frac{\D\widetilde \pi}{\D\pi} \, \D\pi \\
        &\ge \frac{1}{\sup \rho} \, \inf_{m\in\R^d} \int \|f-m\|^2 \, \D\pi
        = \frac{1}{\sup\rho} \var_\pi f.
    \end{align*}
    Combining these inqualities with the Brascamp-Lieb inequality for $\widetilde \pi$,
    \begin{align*}
        \int \langle \nabla f, {(\nabla^2 \widetilde V)}^{-1} \nabla f \rangle \, \D\widetilde \pi
        \ge \var_{\widetilde \pi} f,
    \end{align*}
    yields~\eqref{eq:mp} with constant $C_{\msf{MP}} = M$.
\end{proof}

\section{Stability in KL with respect to exp-concave perturbations}
\label{appendix:auxiliary}
The following lemma quantifies the approximation error of replacing $\pi$ by $\pi_\beta$ in Section~\ref{scn:unif_sampling} and, more generally provides a simple bound to control the KL divergence between a log-concave distribution and its perturbation by a $\nu$-exp-concave barrier function. Its proof uses crucially displacement convexity of the KL divergence to a log-concave measure~\cite[\S 5]{villani2003topics}, and it can be viewed as the sampling analogue of~\cite[(4.2.17)]{nesterov2004optimization}.

Recall that $b$ is \emph{$\nu$-exp-concave} if the mapping $\exp(-\nu^{-1} b)$ is concave.


\begin{lem}\label{lem:self_concordant}
Let $\pi$ be a log-concave distribution  on a convex set $\eu K \subset \R^d$. Fix $\nu>0$, and let $\widetilde \pi$  have density $\exp(- b)$ with respect to $\pi$, where  $b: \eu K \to \R$ is $\nu$-exp-concave. Then it holds that
$$
D_{\rm KL}(\widetilde \pi \mmid \pi) \le \nu\,.
$$

\end{lem}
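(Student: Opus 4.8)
The plan is to view $D_{\rm KL}(\widetilde\pi \mmid \pi)$ as the value of the functional $\mu \mapsto D_{\rm KL}(\mu \mmid \pi)$ at $\mu = \widetilde\pi$ and to exploit the fact that, since $\pi$ is log-concave, this functional is displacement convex along $W_2$-geodesics (\cite[\S 5]{villani2003topics}). First I would set up the variational picture: $D_{\rm KL}(\cdot \mmid \pi)$ is minimized (value $0$) at $\mu = \pi$, so along the constant-speed geodesic $(\mu_s)_{s\in[0,1]}$ from $\mu_0 = \pi$ to $\mu_1 = \widetilde\pi$, displacement convexity gives
\[
D_{\rm KL}(\widetilde\pi \mmid \pi) \le \frac{\D}{\D s}\Big|_{s=1^-} D_{\rm KL}(\mu_s \mmid \pi),
\]
i.e. the KL divergence is bounded by the "slope at the endpoint." This reduces the problem to estimating that derivative, which by the first variation formula for relative entropy along geodesics equals an integral of $\langle \nabla \ln(\widetilde\pi/\pi), \cdot\rangle = -\langle \nabla b, T - \id\rangle$-type terms against $\widetilde\pi$, where $T$ is the optimal transport map from $\widetilde\pi$ back to $\pi$.

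The second and key step is to control this first-variation term using $\nu$-exp-concavity of $b$. Writing $h := \exp(-\nu^{-1} b)$, concavity of $h$ gives, for any two points $x,y$, the inequality $h(y) \le h(x) + \langle \nabla h(x), y - x\rangle$, equivalently $b(y) - b(x) \ge -\nu\,\langle \nabla (\ln h)(x), y-x\rangle \cdot (\text{something})$; more precisely concavity of $h=e^{-b/\nu}$ yields $b(x) - b(y) \le -\langle \nabla b(x), x - y\rangle + \nu\big(1 - e^{(b(x)-b(y))/\nu}\big)$ — the cleanest route is: concavity of $h$ $\Rightarrow$ $h(x) - h(y) \le \langle \nabla h(x), x - y\rangle$, and then translate back. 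The point is to obtain a pointwise bound of the form $\langle \nabla b(x), x - T(x)\rangle \le b(x) - b(T(x)) + \nu \cdot (\text{correction})$ which, after integrating against $\widetilde\pi$ and using $\int (b - b\circ T)\,\D\widetilde\pi$ telescoping against the normalization constants of $\pi$ and $\widetilde\pi$, collapses to the bound $\nu$. I expect one can arrange it so that the terms $\int b \,\D\widetilde\pi - \int b\circ T\,\D\widetilde\pi = \int b\,\D\widetilde\pi - \int b\,\D\pi$ combine with $\log(Z_\pi/Z_{\widetilde\pi})$ to produce $D_{\rm KL}(\widetilde\pi\mmid\pi)$ itself on both sides, leaving a clean residual $\le \nu$.

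Alternatively — and this may be the slicker implementation — I would differentiate along the geodesic from the *other* end: let $(\mu_s)$ run from $\pi$ to $\widetilde\pi$ and use convexity in the form $D_{\rm KL}(\mu_1\mmid\pi) - D_{\rm KL}(\mu_0\mmid\pi) \le \frac{\D}{\D s}|_{s=1} D_{\rm KL}(\mu_s\mmid\pi)$, then recognize the derivative as $\int \langle -\nabla b, \id - T_1\rangle \,\D\widetilde\pi$ where $T_1$ pushes $\widetilde\pi$ to $\pi$, and bound $-\langle \nabla b(x), x - T_1(x)\rangle$ using exp-concavity: since $e^{-b/\nu}$ is concave, $e^{-b(T_1(x))/\nu} \le e^{-b(x)/\nu}\big(1 - \nu^{-1}\langle \nabla b(x), T_1(x) - x\rangle\big)$, i.e. $-\langle \nabla b(x), x - T_1(x)\rangle \le \nu\big(1 - e^{(b(x) - b(T_1(x)))/\nu}\big) \le \nu\big(b(T_1(x)) - b(x)\big) \cdot$ — no, rather use $1 - e^{u} \le -u$ the wrong way; instead use $1 - e^{-v} \le v$: writing $v = \nu^{-1}(b(T_1(x)) - b(x))$ is not sign-definite, so I would instead bound $-\langle\nabla b, x - T_1\rangle \le \nu e^{b(x)/\nu}\big(e^{-b(x)/\nu} - e^{-b(T_1(x))/\nu}\big)$ and integrate, using $\int e^{-b}\,\D\pi$ normalizations.

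The main obstacle I anticipate is the rigorous justification of the first-variation/differentiation-along-geodesics step and the measurability/integrability of the optimal map $T$ — i.e. making the displacement-convexity argument precise when $\pi$ need not be smooth and $\eu K$ may be unbounded — rather than the algebra of the exp-concavity estimate, which is elementary. A secondary subtlety is keeping the normalizing constants ($Z_\pi$, $Z_{\widetilde\pi}$) bookkept correctly so that the "$b - b\circ T$" telescoping genuinely reconstitutes $D_{\rm KL}$ and the residual is exactly $\nu$ with no spurious dimension- or $\pi$-dependent factors; the stated analogy with \cite[(4.2.17)]{nesterov2004optimization} strongly suggests this works out cleanly.
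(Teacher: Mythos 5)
Your first step is exactly the paper's: use displacement convexity of $\mu\mapsto D_{\rm KL}(\mu\mmid\pi)$ (which holds because $\pi$ is log-concave) in the above-tangent form, evaluated at $\widetilde\pi$ in the direction of $\pi$, to reduce the problem to bounding $\E\langle\nabla b(\widetilde X),\,X-\widetilde X\rangle$ where $(X,\widetilde X)$ is an optimal coupling of $(\pi,\widetilde\pi)$. The technical justification you flag at the end is indeed the standard citation chase (\cite[Theorem 9.4.11]{ambrosio2008gradient}, \cite[Proposition 5.29]{villani2003topics}), and the paper handles it precisely that way.

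However, your second step has a genuine gap. You keep trying to produce a bound of the form $\langle\nabla b(x), T(x)-x\rangle\le b(x)-b(T(x)) + \nu\cdot(\text{correction})$ and then integrate, telescope against normalizing constants, and hope the KL term reconstitutes itself. That route is both unnecessary and unlikely to close cleanly. The correct observation, which you glimpse but then abandon, is that $\nu$-exp-concavity gives a \emph{pointwise constant} bound $\langle\nabla b(x), y-x\rangle\le\nu$ for all $x,y\in\eu K$, with no $b$-dependent correction at all. Indeed, writing $h=\exp(-\nu^{-1}b)$, concavity gives $0<h(y)\le h(x)+\langle\nabla h(x),y-x\rangle = h(x)\bigl(1-\nu^{-1}\langle\nabla b(x),y-x\rangle\bigr)$; since $h(x)>0$, this forces $\langle\nabla b(x),y-x\rangle<\nu$. (The paper phrases this via the one-dimensional restriction $\varphi(t) = -h(\widetilde X + t(X-\widetilde X))$, but it is the same inequality.) Applying this pointwise with $x=\widetilde X$, $y=X$ and taking expectations immediately gives $D_{\rm KL}(\widetilde\pi\mmid\pi)\le\nu$ with no bookkeeping of $Z_\pi$ or $Z_{\widetilde\pi}$. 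In short: you have chosen the right two ingredients, but fail to realize that the exp-concavity step already gives a constant upper bound, and instead set off on a telescoping detour that does not lead anywhere. Fix this by noting that positivity of $e^{-b/\nu}$ trivializes the gradient inequality into $\langle\nabla b(x),y-x\rangle\le\nu$.
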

\begin{proof}
    On $\interior \eu K$, we have
    \begin{align}
    \label{eq:pr:sc:1}
        -\nabla \ln \frac{\D\widetilde \pi}{\D \pi}= \nabla b.
    \end{align}
    The measure $\pi$ is log-concave, so by displacement convexity of entropy~\cite[Theorem 9.4.11]{ambrosio2008gradient} and the ``above-tangent'' formulation of convexity~\cite[Proposition 5.29]{villani2003topics}, we have
    \begin{align*}
        0
        &= D_{\rm KL}(\pi \mmid \pi)
        \ge D_{\rm KL}(\widetilde \pi \mmid \pi) + \E\bigl\langle \nabla \ln \frac{\D \widetilde \pi}{\D \pi}(\widetilde X), X - \widetilde X \bigr\rangle,
    \end{align*}
    where $(X,\widetilde X)$ are optimally coupled for $\pi$ and $\widetilde \pi$.
    If we rearrange this inequality and use the identities in~\eqref{eq:pr:sc:1}, we get
    \begin{align}
     \label{eq:pr:sc:2}
        D_{\rm KL}(\widetilde \pi \mmid \pi)
        &\le - \E\bigl\langle \nabla \ln \frac{\D\widetilde \pi}{\D\pi}(\widetilde X), X - \widetilde X \bigr\rangle
        = \E\langle \nabla b(\widetilde X), X - \widetilde X \rangle\,.
    \end{align}
    We now use the fact that $b$ is $\nu$-exp-concave. To that end, define the convex function 
$$
\varphi(t)=-\exp\bigl(-\frac1\nu b(\widetilde X+t\, (X-\widetilde X))\bigr), \qquad t \in [0,1]\,.
$$
By convexity, we have 
$$
\varphi'(0)\cdot(1-0) \le \varphi(1)-\varphi(0) \le -\varphi(0)=\exp\bigl( - \frac{1}{\nu} b(\widetilde X)\bigr).
$$
Since 
$$
\varphi'(0)= \frac{1}{\nu} \exp\bigl(- \frac{1}{\nu}b(\widetilde X)\bigr) \, \langle \nabla b(\widetilde X), X- \widetilde X\rangle\,,
$$
the above inequality reads $\langle \nabla b(\widetilde X), X - \widetilde X \rangle \le \nu$, which completes the proof together with~\eqref{eq:pr:sc:2}.
\end{proof}

\begin{rmk}
    It is known that given any convex body $\eu C \subset \R^d$, there exists a standard self-concordant $\nu^{-1}$-exp-concave barrier with $\nu \le d$~\cite{nesterov1995interiorpoint, bubeck2015entropic, yintatlee2018universalbarrier}.
\end{rmk}

\section{Numerical experiments}
\label{appendix:numericals}

In this section, we gather additional details and figures to support our numerical experiments. First, in Section~\ref{subsec:moregauss}, we display the samples from our Gaussian experiment. Then, Section~\ref{subsec:lrdetails} gives details of the Bayesian logistic regression experiment displayed in Figure~\ref{fig:logistic} and shows the effect of varying step size. Section~\ref{subsec:unif_cvx} gives details of sampling from an ill-conditioned convex set. Finally, Section~\ref{subsec:samplestrict} shows an experiment where we use the \ref{eq:NLA} and a Mirror-Langevin Algorithm~\ref{eq:mla} to approximately sample from a degenerate log-concave distribution.

\subsection{Sampling from a Gaussian distribution}
\label{subsec:moregauss}

We display some supplementary experiments for the elliptically symmetric scatter family example of Section~\ref{sec:sim}. First, we repeat the example in Figure~\ref{fig:gengauss_mean} for the simpler case of the Gaussian distribution ($\gamma=1$) on $\R^{100}$ with the same scatter matrix $\Sigma=\diag(1, 2, \dots, 100)$ in Figure~\ref{fig:gauss_mean}. We again see the superiority of \ref{eq:NLA} over the  Unadjusted Langevin Algorithm (ULA)~\cite{durmus2019high} and the Tamed Unadjusted Langevin Algorithm (TULA)~\cite{brosse2019tamed}. Here and in Section~\ref{sec:sim} the additional parameter of TULA (denoted $\gamma$ in \cite{brosse2019tamed}) is chosen equal to $.1$.

\begin{figure}[ht]
    \vspace{-0.0cm}
    \centering
    \includegraphics[width = 0.49\textwidth]{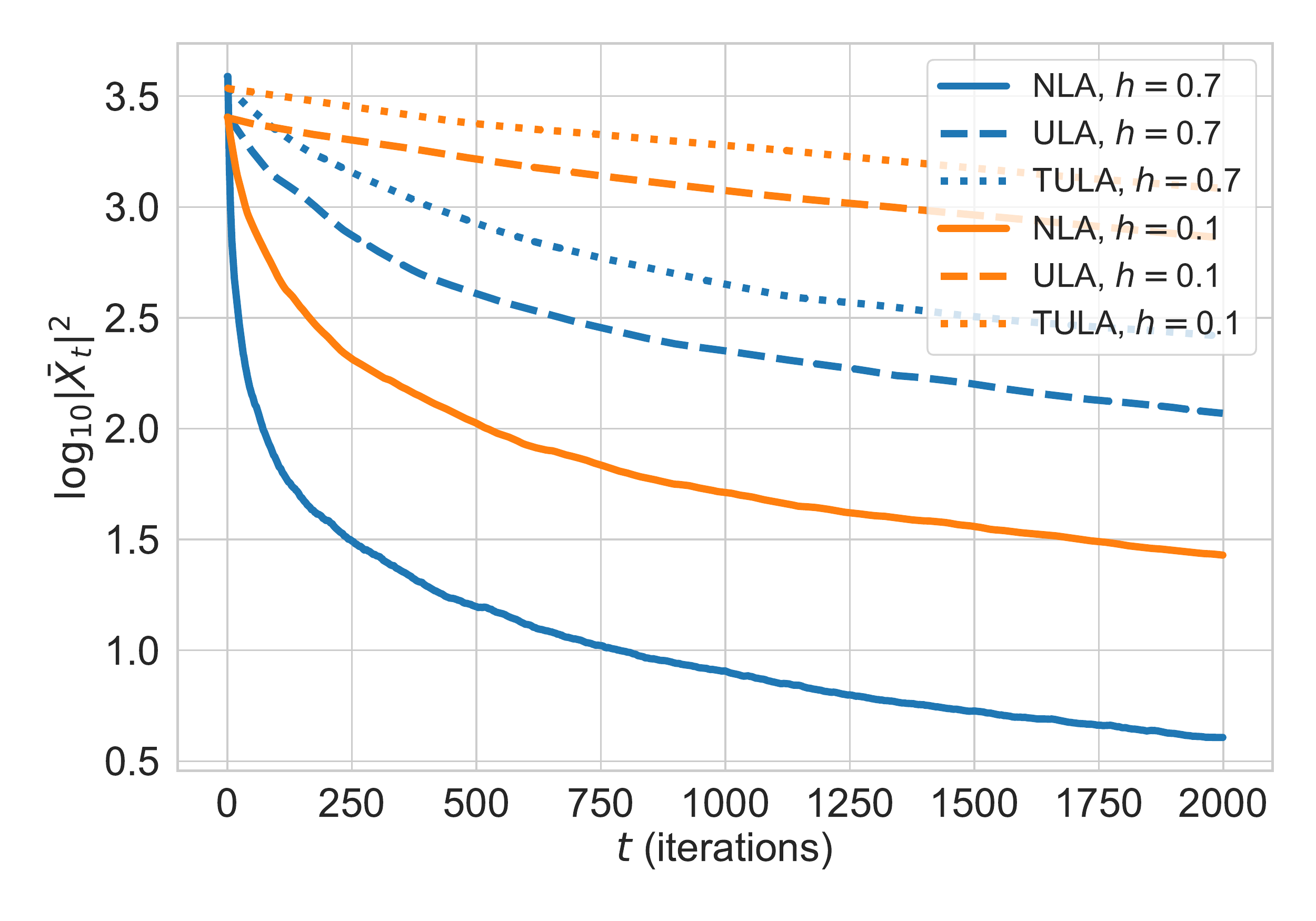}
    \includegraphics[width = 0.49\textwidth]{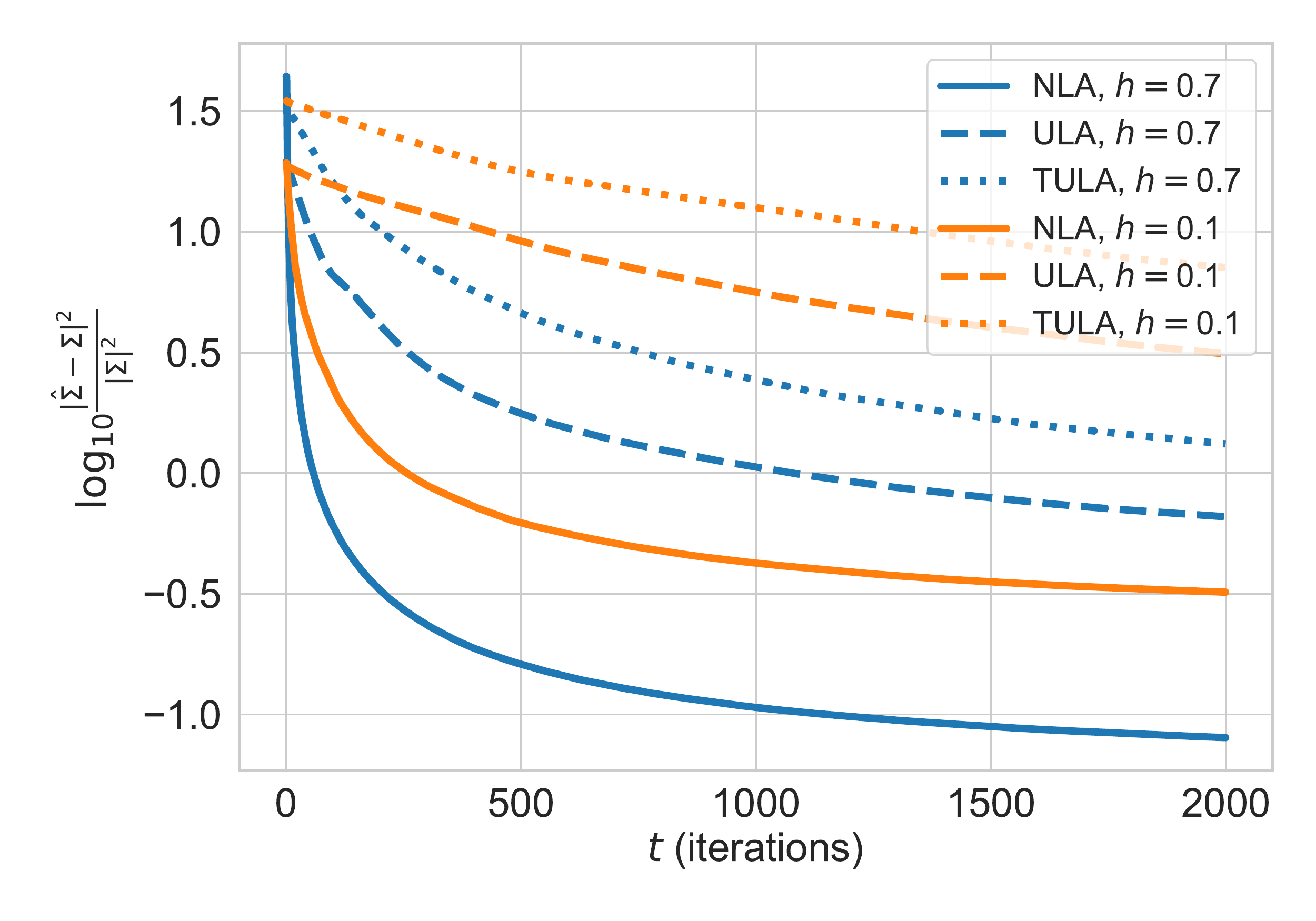}
    \caption{We display convergence of the various algorithms for an ill-conditioned Gaussian distribution, with $d=100$ and $\Sigma = \mathrm{diag}(1, 2,\dots, 100)$. Left: error is the squared distance from $0$. Right: error is the relative distance between scatter matrices. As in the experiment displayed in Figure~\ref{fig:gengauss_mean}, \ref{eq:NLA} rapidly converges both in terms of location and scale for large step sizes.}
    \label{fig:gauss_mean}
    \vspace{-0cm}
\end{figure}

We also display some samples from the Gaussian experiment of Figure~\ref{fig:gauss_mean} in Figure~\ref{fig:gauss_samples}. \ref{eq:NLA} maintains good performance for a wide range of step-size choices, while ULA and TULA require a small step size to accurately sample from the target distribution. In fact, even with a small step size, ULA and TULA often jump to small probability regions, while \ref{eq:NLA} avoids these regions even for large step sizes.

\begin{figure}[ht!]
    \vspace{-0cm}
    \begin{center}
    \includegraphics[width = 0.3\textwidth]{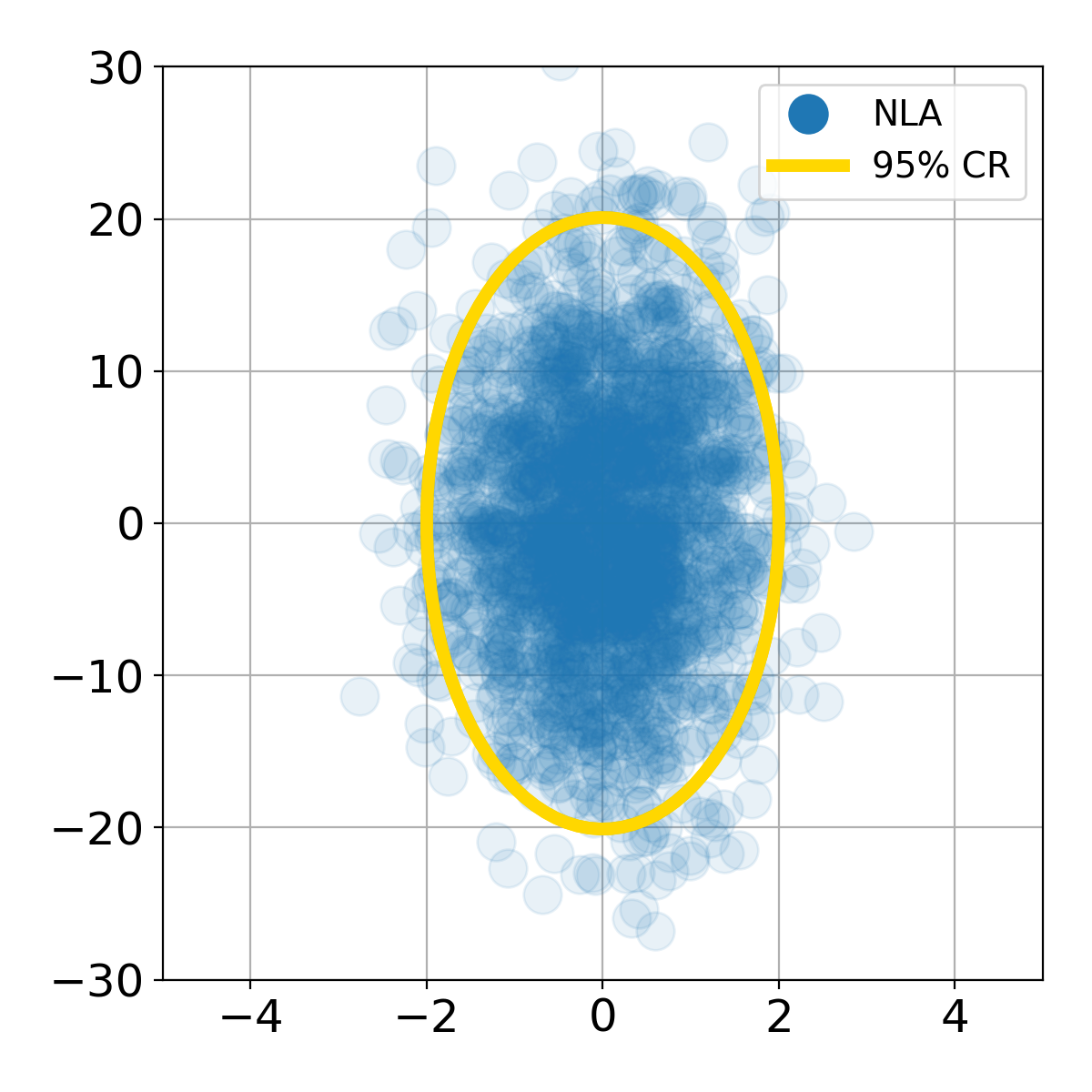}
    \includegraphics[width = 0.3\textwidth]{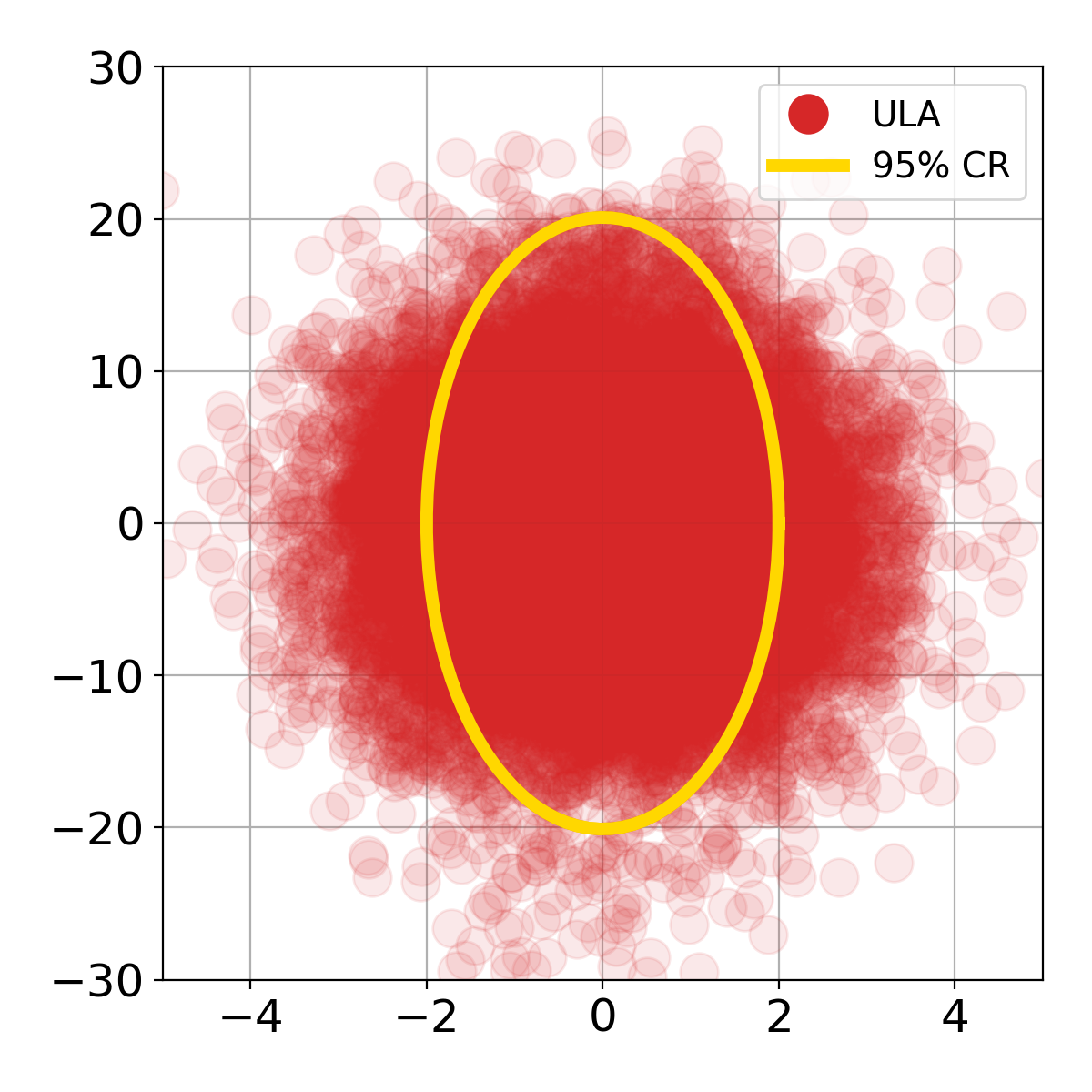}
    \includegraphics[width = 0.3\textwidth]{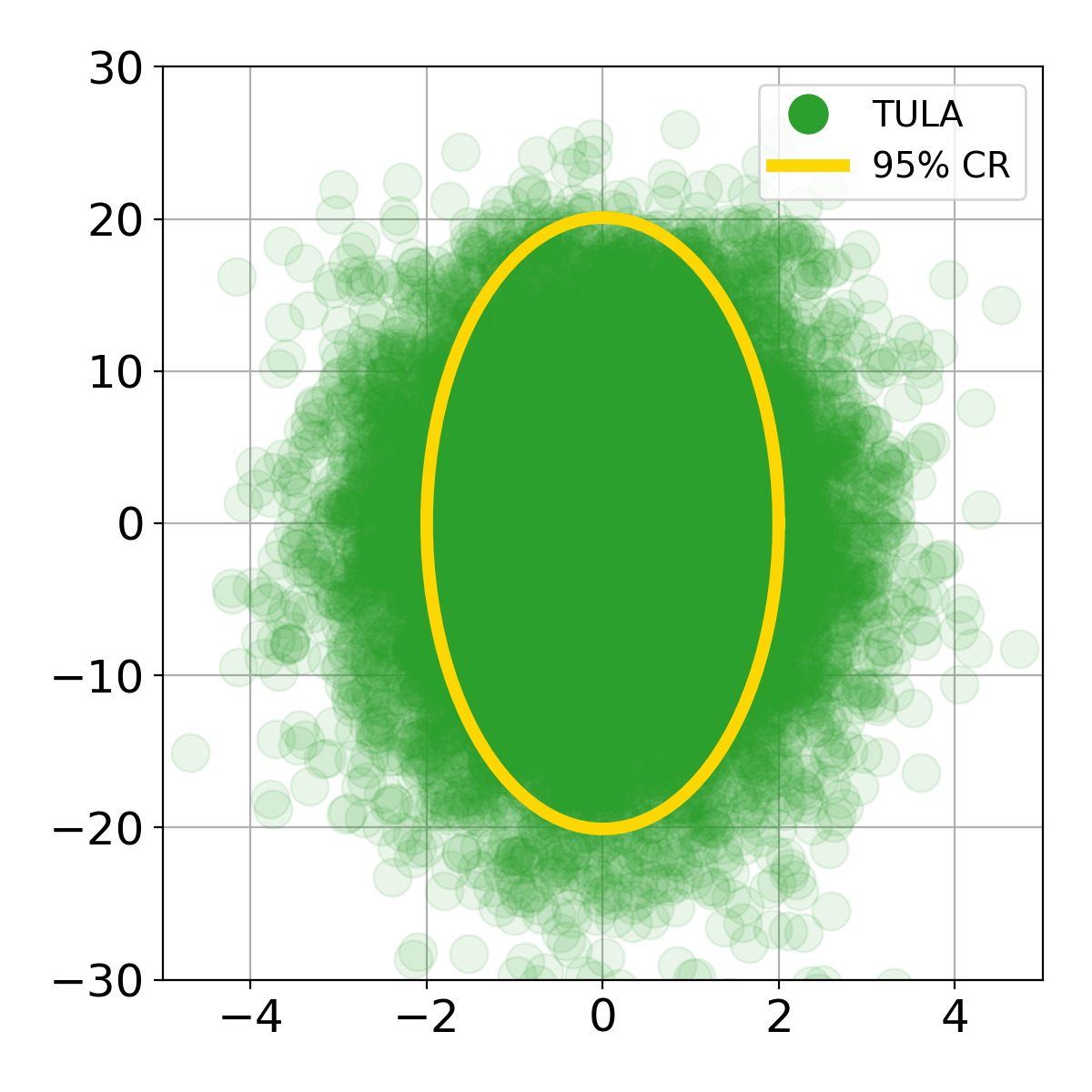}
    \includegraphics[width = 0.3\textwidth]{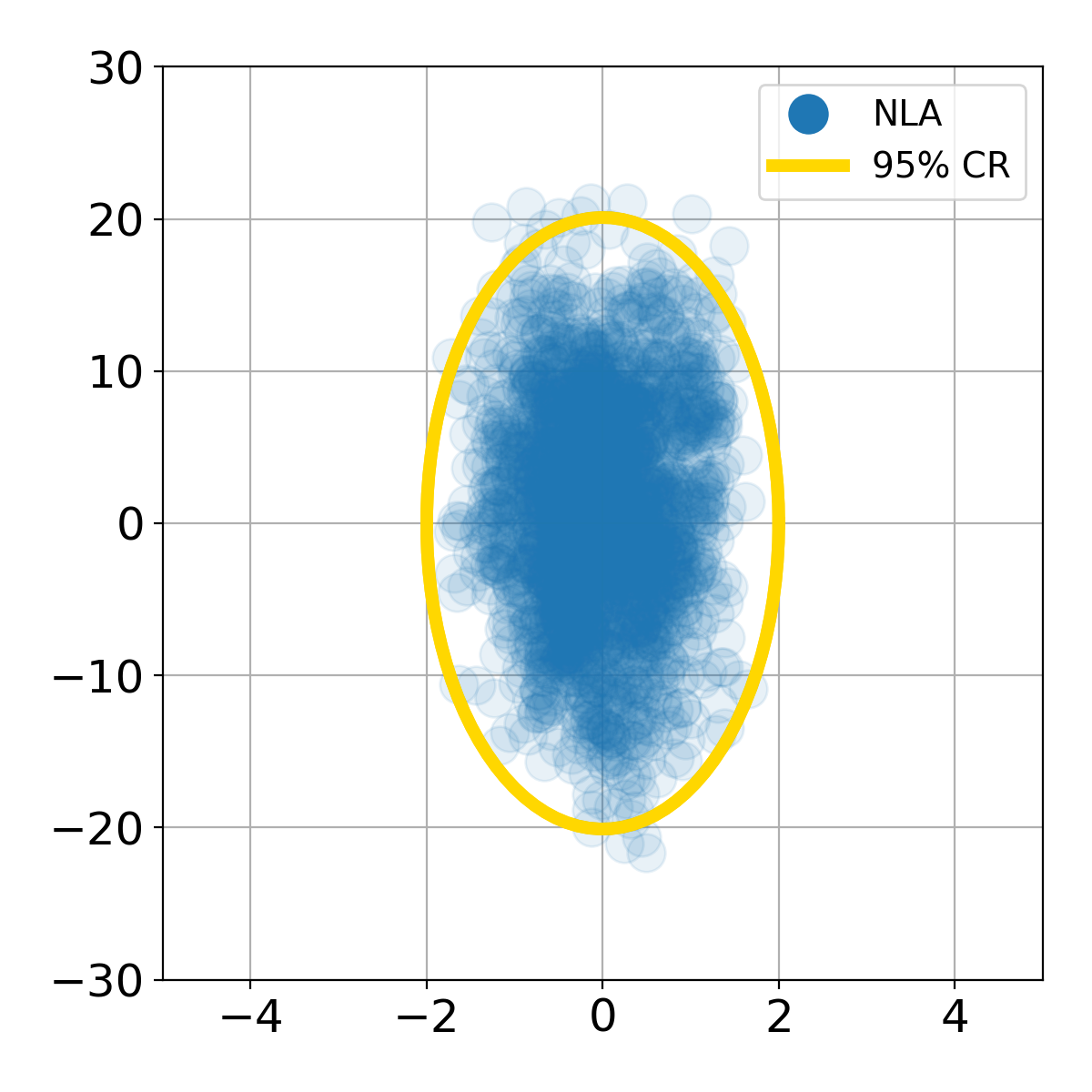}
    \includegraphics[width = 0.3\textwidth]{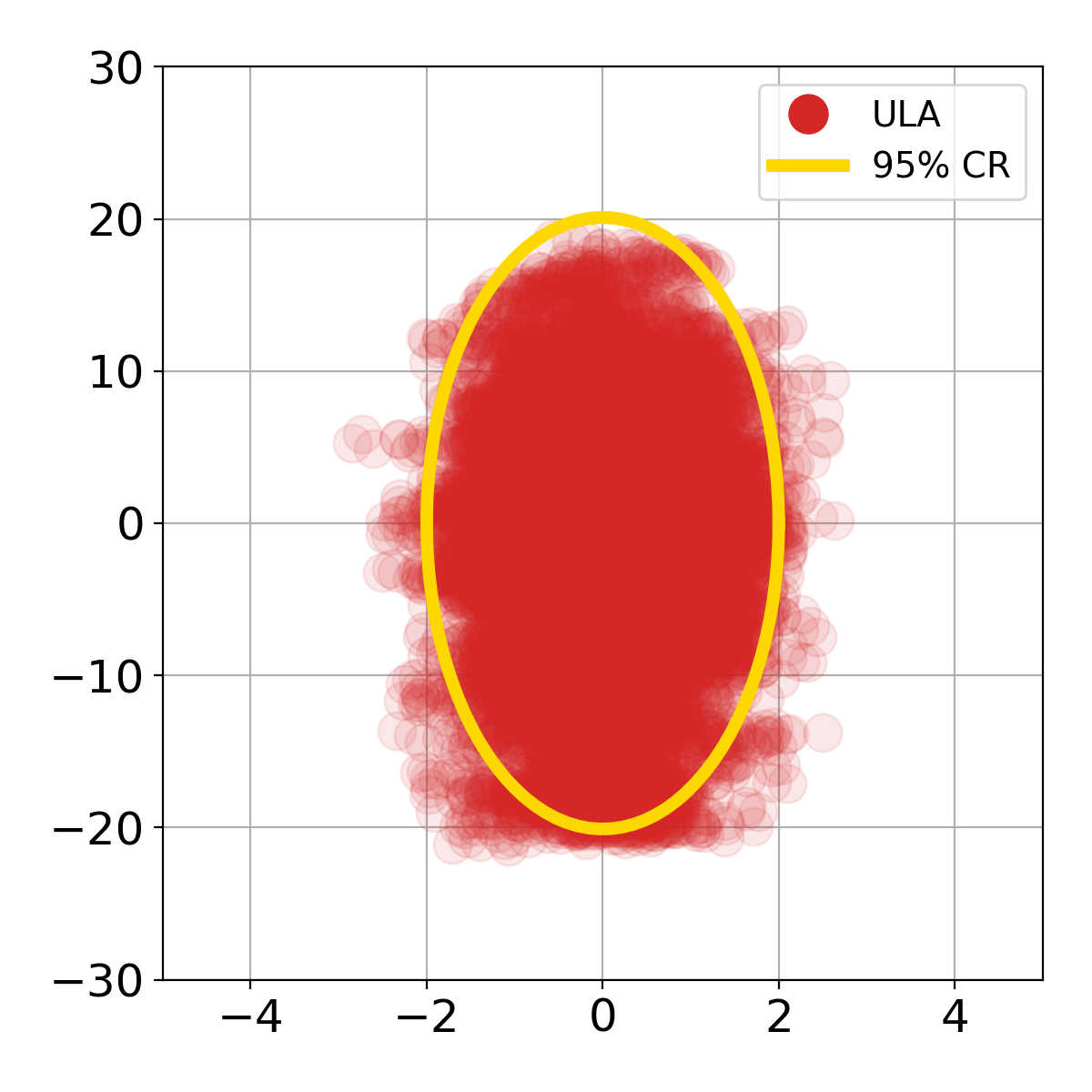}
    \includegraphics[width = 0.3\textwidth]{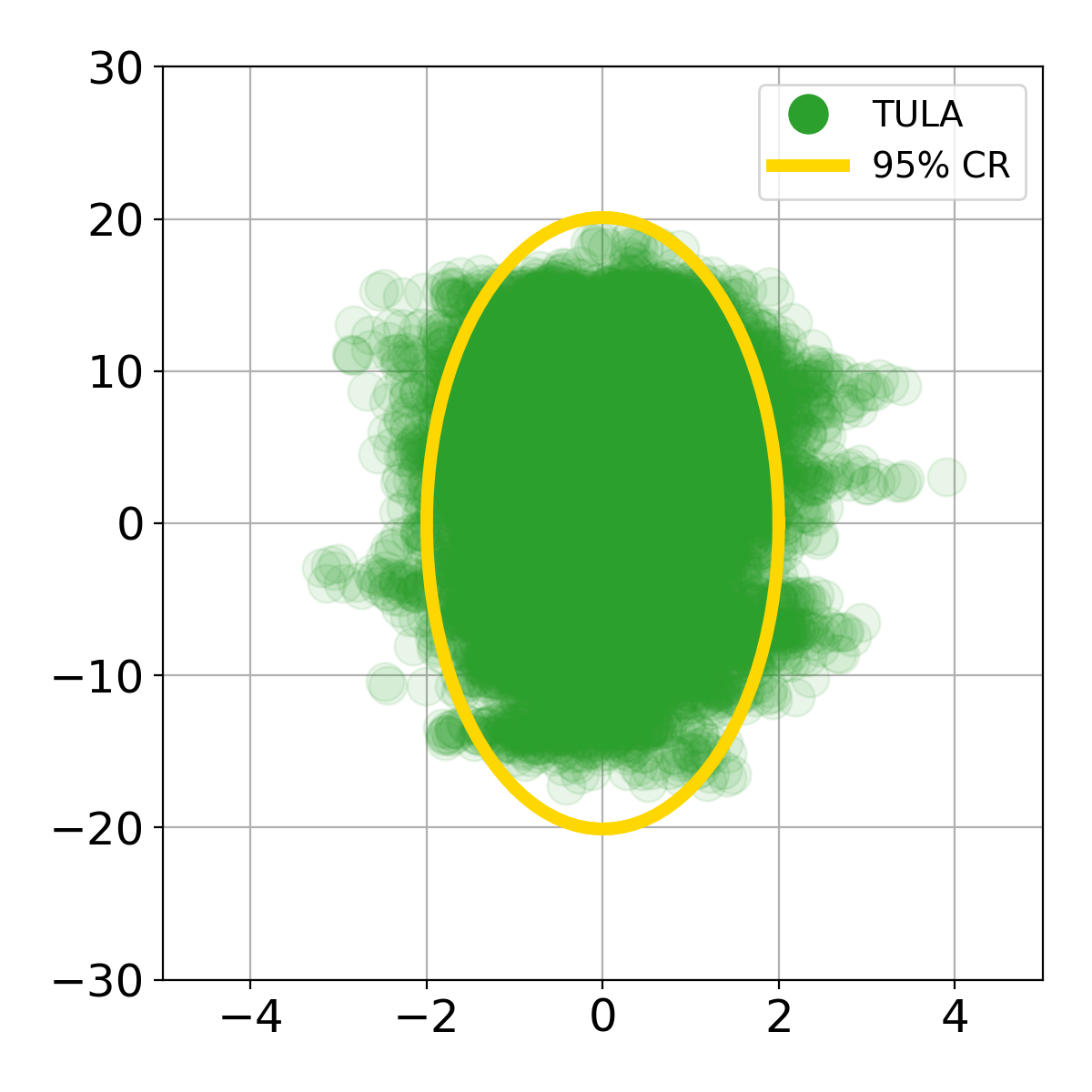}
    \end{center}
    \vspace{-.6cm}
    \caption{
    Samples from \ref{eq:NLA}, ULA, and TULA for the ill-conditioned Gaussian example of Figure~\ref{fig:gauss_mean}, with $\Sigma = \mathrm{diag}(1, 2, \dots, 100)$. We display the projection onto the first (least spread) and last (most spread) population principal components, along with the projection of a 95\% confidence region. Top: the step size for all algorithms is $h=0.7$, Bottom: the step size for all algorithms is $h=0.05$.
    }
    \label{fig:gauss_samples}
\end{figure}

\subsection{Bayesian logistic regression}
\label{subsec:lrdetails}


We give details for the two-dimensional Bayesian logistic regression example in Figure~\ref{fig:logistic}. In the Bayesian logistic regression model, covariates are drawn as $X_i \sim \mathcal{N}(0,\mathrm{diag}(10, 0.1))$, the response variables are $Y_i \sim \mathsf{Ber}(\mathrm{logit}(\theta^{\top} X_i))$, and the parameters $\theta$ have a $\mc N(0, 10I_2)$ prior. We consider using \ref{eq:NLA} to sample from the posterior distribution of $\theta$ given the observations $(X_i,Y_i)$, $i=1,\dotsc,n$, which is
$$ \pi(\theta) \propto \exp \Bigl[-\frac{1}{20} \|\theta\|^2 + \sum_{i=1}^n Y_i \theta^\top X_i - \ln(1 + e^{\theta^\top X_i}) \Bigr]\,,$$
which is strongly log-concave. While the gradient of the potential is invertible, it has no closed-form, and so in our experiments we invert it numerically by solving $\nabla V^{\star}(y) = \argmax_{x \in \R^d}{\{\langle x, y \rangle - V(x)\}}$ with Newton's method. We find that, with a warm start from the current iterate $X_t$, it suffices to run Newton's method for a small number of iterations to approximately invert the gradient. 






For the purposes of this experiment, we generate 100 samples $X_i \sim \mathcal{N}(0,\mathrm{diag}(10, 0.1))$ and $Y_i \sim \mathsf{Ber}(\mathrm{logit}(\theta^{\star\top} X_i))$, where we set $\theta^\star = (1, 1)$.

We display the result for various sampling algorithms in Figure~\ref{fig:logistic}. All algorithms are implemented with $h=0.1$ and a burn-in time of $10^4$ steps. This example shows the advantage of taking a large step-size with \ref{eq:NLA} in this ill-conditioned model, while ULA and TULA create samples that are overdispersed. In Figure~\ref{fig:logistic2}, we also show the effect of decreasing step size in this example. In this case, we see that ULA and TULA still step into low probability regions or fail to explore the underlying density well. On the other hand, \ref{eq:NLA} remains constrained in the high probability region.

\begin{figure}[ht!]
    \vspace{-0cm}
    \begin{center}
    \includegraphics[width = 0.4\textwidth]{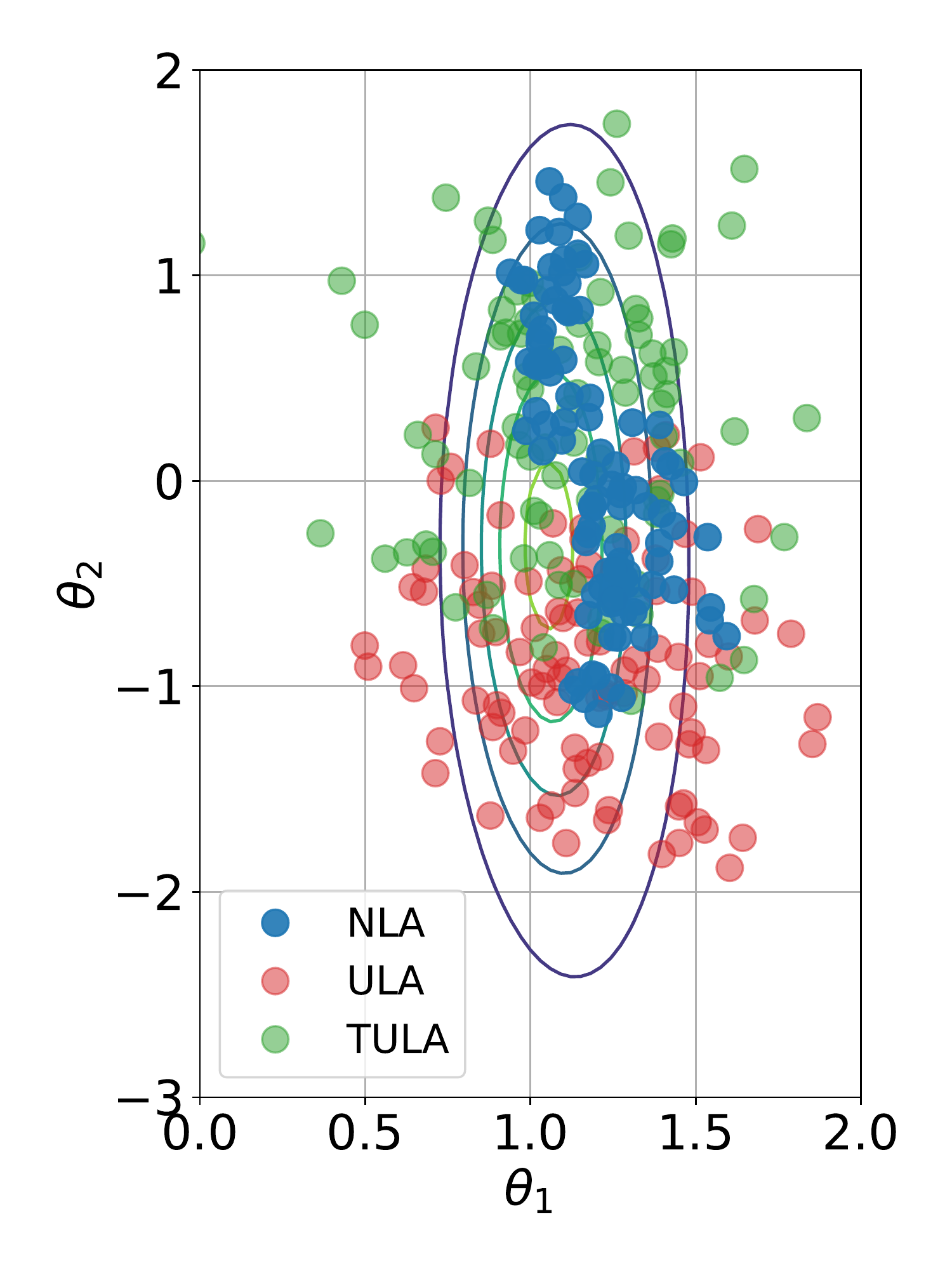}
    \includegraphics[width = 0.4\textwidth]{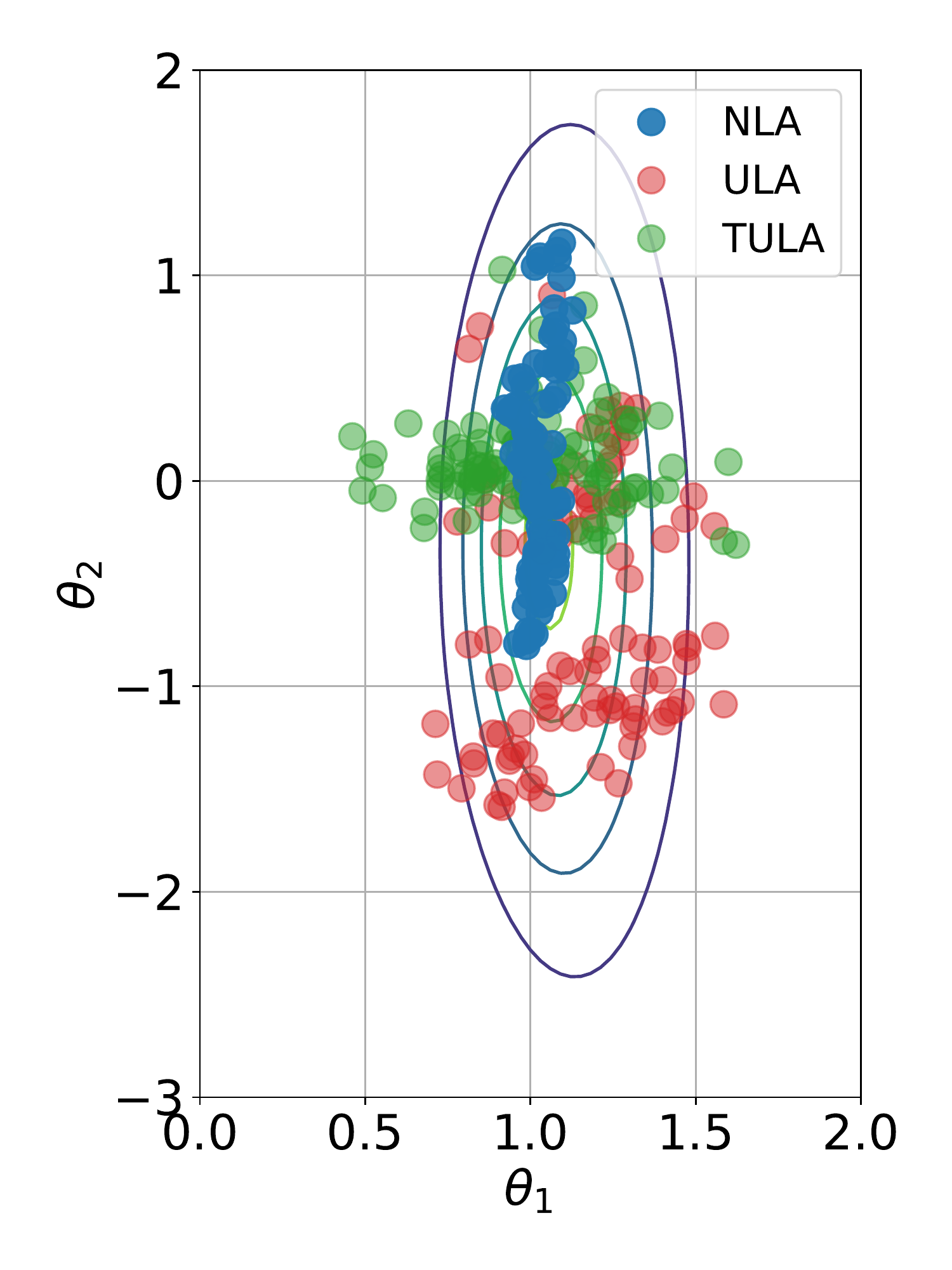}
    \end{center}
    \vspace{-.6cm}
    \caption{Samples from the posterior distribution of a Bayesian logistic regression model using one run of \ref{eq:NLA}, ULA, and TULA after a burn-in of $10^4$. Left: large step size (all algorithms use $h=0.05$); \ref{eq:NLA} remains within the high-density contours, while the ULA and TULA take steps into low-density areas. Right: small step size (all algorithms use $h=0.01$); \ref{eq:NLA} explores the underlying distribution faster than its competitors.}
    \label{fig:logistic2}
\end{figure}

\subsection{Uniform sampling on a convex body}
\label{subsec:unif_cvx}
This section contains details for the simulations in Figure~\ref{fig:ellipse}. We sample from the uniform distribution on the rectangle $[-0.01, 0.01]\times[-1,1]$ using \ref{eq:NLA}, PLA, and the Metropolis-Adjusted Langevin Algorithm (MALA)~\cite{dwivedi2019log}. PLA and MALA target the uniform distribution directly. \ref{eq:NLA} samples from an approximate distribution, given in Section~\ref{scn:unif_sampling}. The step sizes are chosen as $h=10^{-5}$ for \ref{eq:NLA} and PLA and $h=0.01$ for MALA. The step sizes for PLA and MALA are tuned to allow the algorithm to reach approximate stationarity in the fewest number of iterations. MALA can use a larger step size because it is unbiased (its stationary distribution coincides with the target distribution, due to the Metropolis-Hastings adjustment). On the other hand, samples from PLA tend to cluster around the boundary for larger step sizes, so we use a smaller step size for both PLA (and \ref{eq:NLA} for fair comparison).
\begin{figure}
    \centering
    \includegraphics[width = 0.6\textwidth]{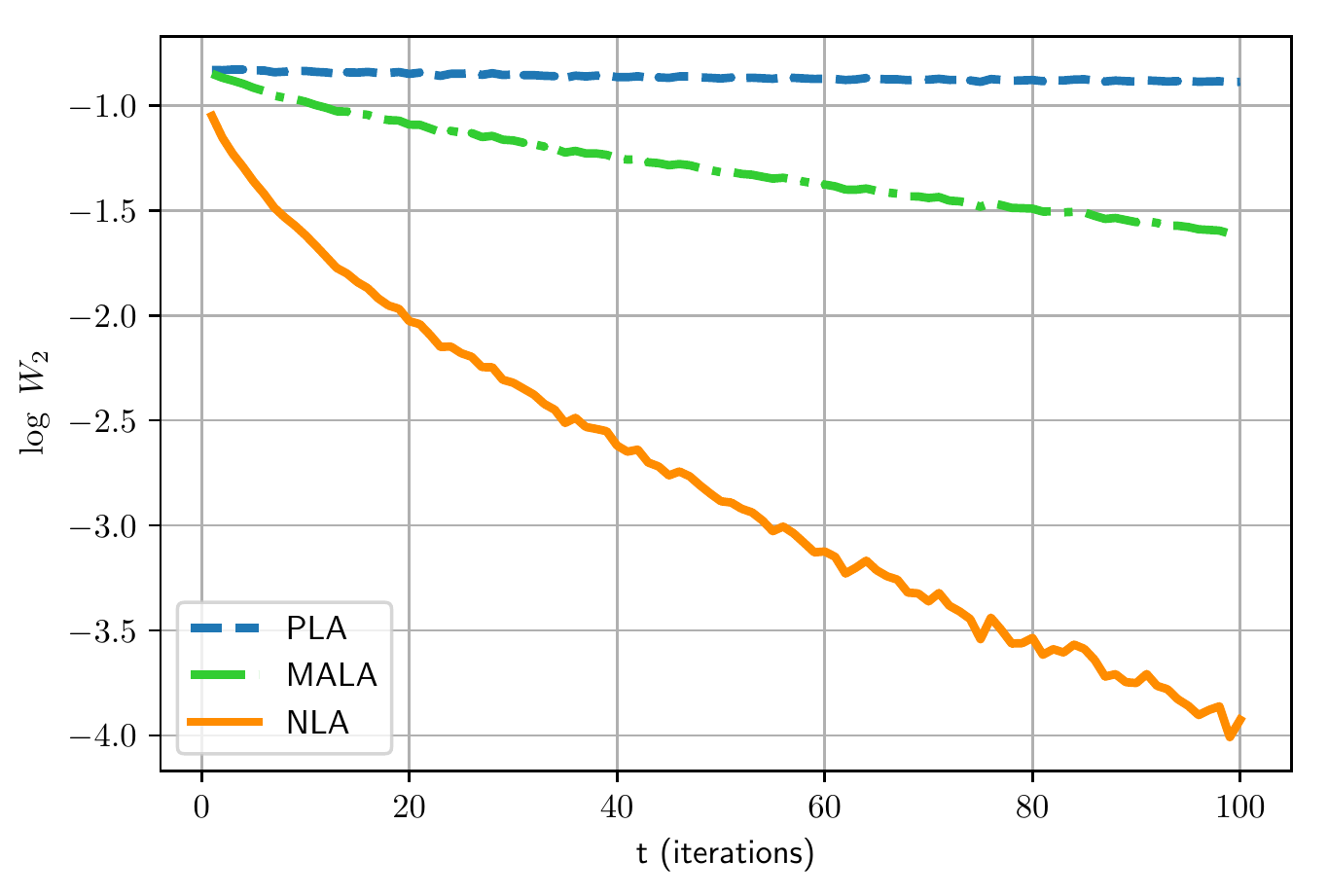}
    \caption{$W_2$ distance (on logarithmic scale) between the uniform distribution on the rectangle $[-0.01,0.01]\times[-1,1]$, and samples produced by \ref{eq:NLA}, PLA, and MALA.}
    \label{fig:sinkhorn}
\end{figure}

To evaluate the performance of the algorithms, we estimate the $2$-Wasserstein distance between the samples drawn by the algorithms and samples drawn from the uniform distribution on the rectangle; see Figure~\ref{fig:sinkhorn}. We use the Sinkhorn distance ($\eps=0.01$) as an approximation for the $2$-Wasserstein distance~\cite{cuturi2013sinkhorn, AltWeeRig17}. Specifically, we sample $1000$ points in parallel, using the three algorithms of interest. At each iteration, we also draw $1000$ points from the uniform distribution on the rectangle, and we compute the Sinkhorn distance between these points and the samples produced by the algorithms. The convergence estimates are averaged over 30 runs.

\subsection{Approximate sampling from degenerate log-concave distributions}
\label{subsec:samplestrict}

In this section, we explore further the problem of approximately sampling according to the measure $\pi(x) \propto \exp(-\|x\|)$ in $\R^2$ considered in Figure~\ref{fig:lapfull2}. To that end, we use the penalization strategy outlined in Section~\ref{subsec:newton} and sample instead from the strongly log-concave measure $\pi_\beta(x) \propto \exp(-\|x\| - \beta \|x - \mathbf{1}\|^2)$ as in Corollary~\ref{cor:degenerate_sampling}, where $\beta = 0.0005$, using discretizations of either \ref{eq:newton} or \ref{eq:mld} with a customized mirror map. Here, $ \mathbf{1}$ is the vector of all ones, which simulates the effect of not knowing the true mean.

We initialize all algorithms with a random point $X_0$ with $\norm{X_0} = 1000$. The initialization is intentionally chosen so that the gradients of the potential at initialization are extremely small. In these circumstances, we expect ULA to mix slowly.

Through this experiment, we demonstrate two empirical observations:
\begin{enumerate}
    \item Initially, the iterates of \ref{eq:NLA} converge extremely rapidly to the vicinity of the origin. This suggests that \ref{eq:NLA} can be useful for initializing other sampling algorithms in highly ill-conditioned settings.
    \item However, once the iterates of \ref{eq:NLA} are near the origin, \ref{eq:NLA} becomes unstable. Specifically, since the Hessian of the potential degenerates rapidly near $0$, the iterates of \ref{eq:NLA} occasionally make large jumps away from $0$. This is due to the fact that the Hessian of $V(x)=\|x\|+\beta \|x-\bone\|^2$
    is given by
\begin{equation}
    \label{eq:potential_norm}
      \nabla^2 V(x) = \frac{1}{\|x\|} \Bigl[I_2 - \Bigl( \frac{x}{\|x\|} \Bigr)\Bigl( \frac{x}{\|x\|} \Bigr)^\top \Bigr] + 2\beta I_2
\end{equation}
    which
    blows up to infinity around $x=0$.
    We remark that Newton's method in optimization can also exhibit unstable behavior~\cite{conn2000trust,nesterov2006cubic}, so this phenomenon is not unexpected.
    
    To rectify this behavior, we also consider the Euler discretization of \ref{eq:mld}, which we call \ref{eq:mla} (see below).
    We demonstrate that with an appropriate choice of mirror map, the iterates of \ref{eq:mla} are stable, yet still enjoy faster convergence than ULA.
\end{enumerate}

Now we proceed to the details of the experiment.
We compare four different methods for sampling from this distribution: \ref{eq:NLA}, ULA, TULA, and the mirror-Langevin Algorithm \eqref{eq:mla}
\begin{align}\label{eq:mla} \tag{$\msf{MLA}$}
    \nabla \phi(X_{k+1})
    &= \nabla \phi(X_k) - h \nabla V(X_k) + \sqrt{2h} \, {[\nabla^2 \phi(X_k)]}^{1/2} \xi_k,
\end{align}
with mirror map $\phi(x) = \|x\|^{3/2}$ and potential $V(x) = \|x\| + \beta \|x - \mathbf{1}\|^2$.
Notice that this mirror map corresponds to that used in the generalized Gaussian case of Section~\ref{sec:sim}.

In Figure~\ref{fig:lapfull}, we display the results of the first $1000$ iterations of the four algorithms. In this stage of the experiment, we observe rapid convergence of \ref{eq:NLA} towards the origin (around which the mass is concentrated), and \ref{eq:mla} also exhibits faster convergence than ULA and TULA. However, already in Figure~\ref{fig:lapfull} (Right) we observe the instability of \ref{eq:NLA} witnessed through large jumps of the iterates.

Next, in Figure~\ref{fig:lapstages}, we treat the samples from the first $1000$ iterations as burn-in, and we look at the performance of the next $1000$ samples. Here we see that the  flexible framework of the more general~\ref{eq:mld} allows us to design algorithms which can outperform \ref{eq:NLA} with superior stability in specific scenarios.

\begin{figure}[ht!]
    \vspace{-0cm}
    \begin{center}
    \includegraphics[width = 0.49\textwidth]{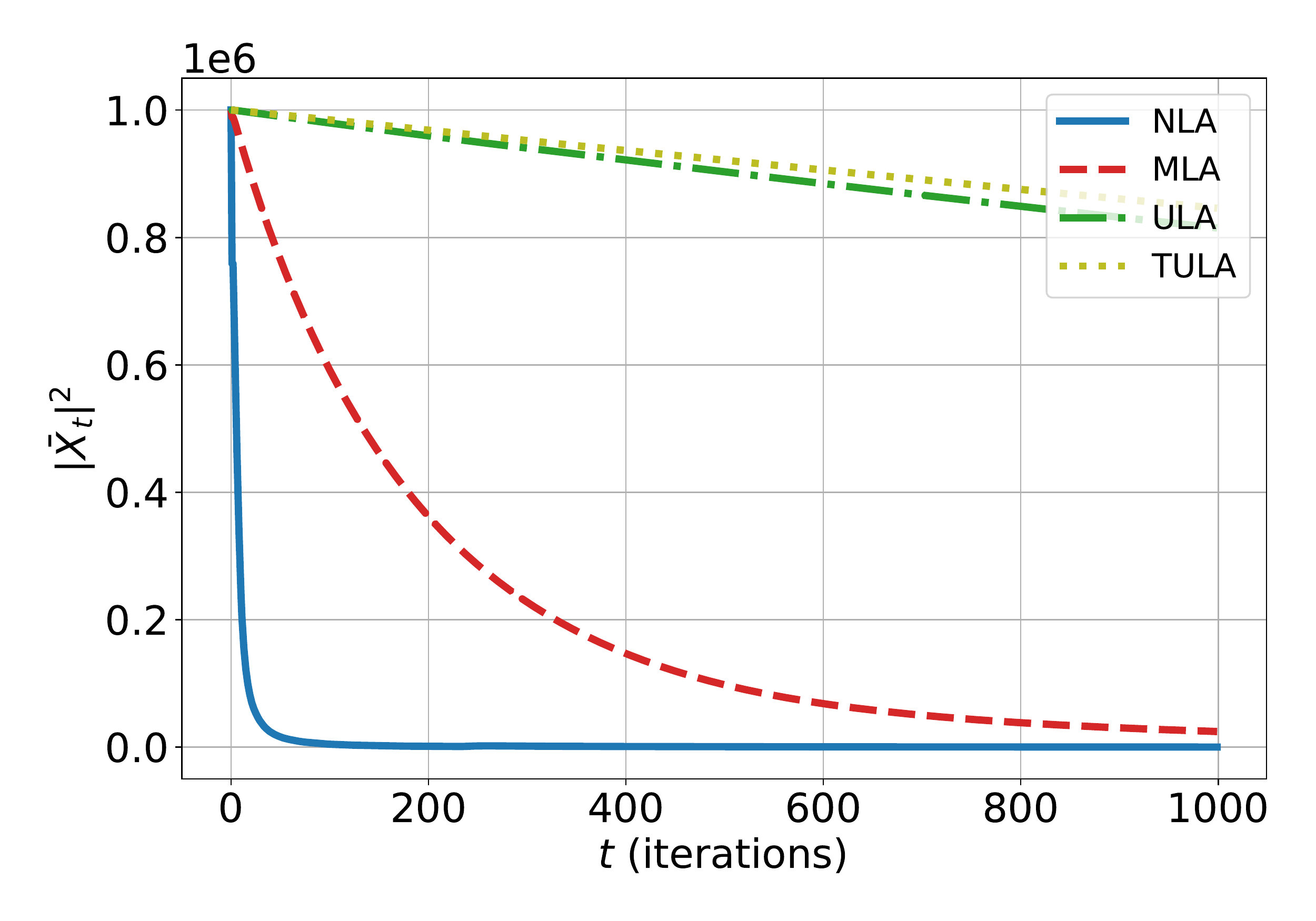}
    \includegraphics[width = 0.4\textwidth]{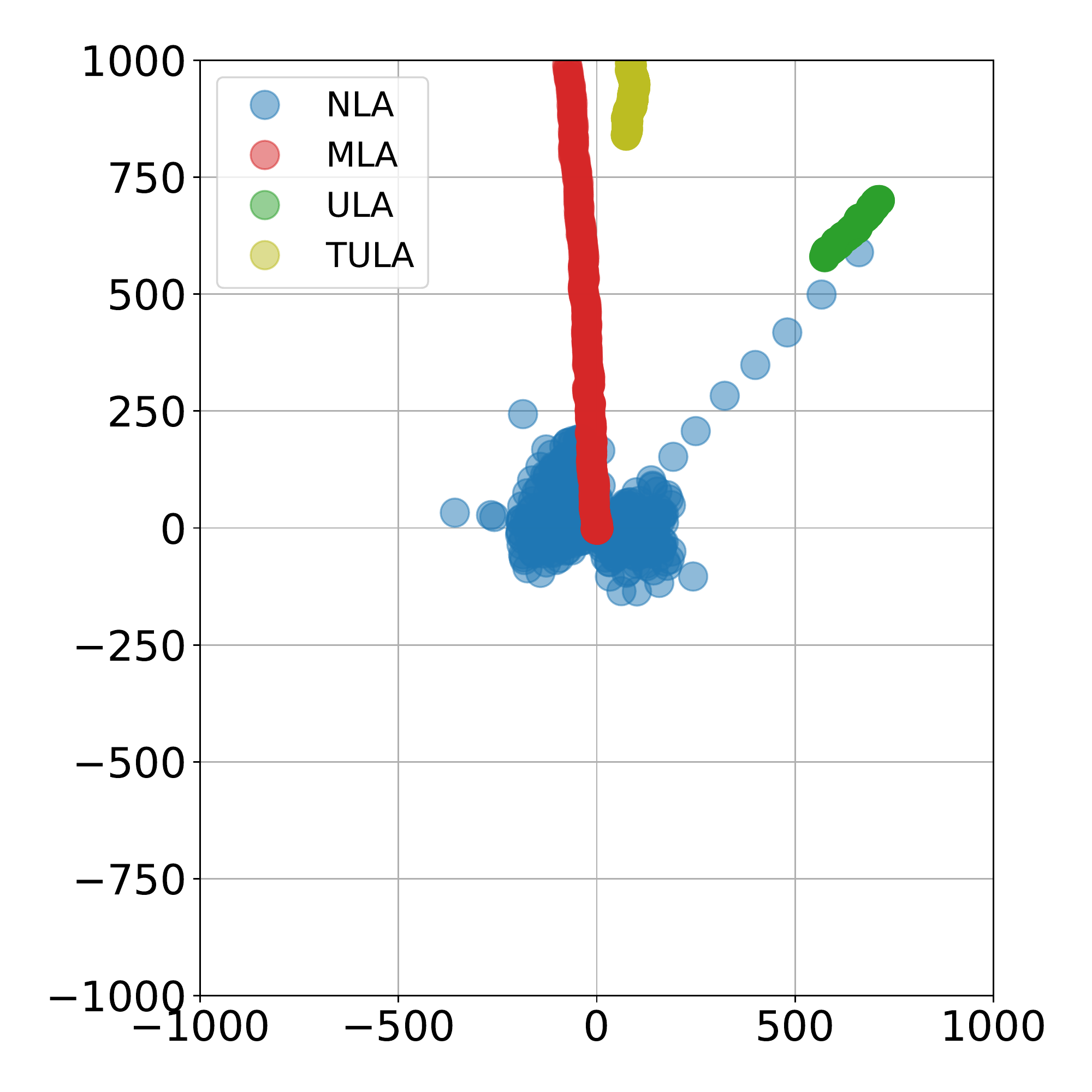}
    \end{center}
    \vspace{-.6cm}
    \caption{First stage of the experiment. Left: We plot the norm of the running mean versus the iteration number for the target measure $\pi_\beta(x) \propto \exp(-\|x\| - 0.0005 \|x - \mathbf{1}\|^2)$. Right: We display the corresponding samples.}
    \label{fig:lapfull}
\end{figure}

\begin{figure}[ht!]
    \vspace{-0cm}
    \begin{center}
    \includegraphics[width = 0.49\textwidth]{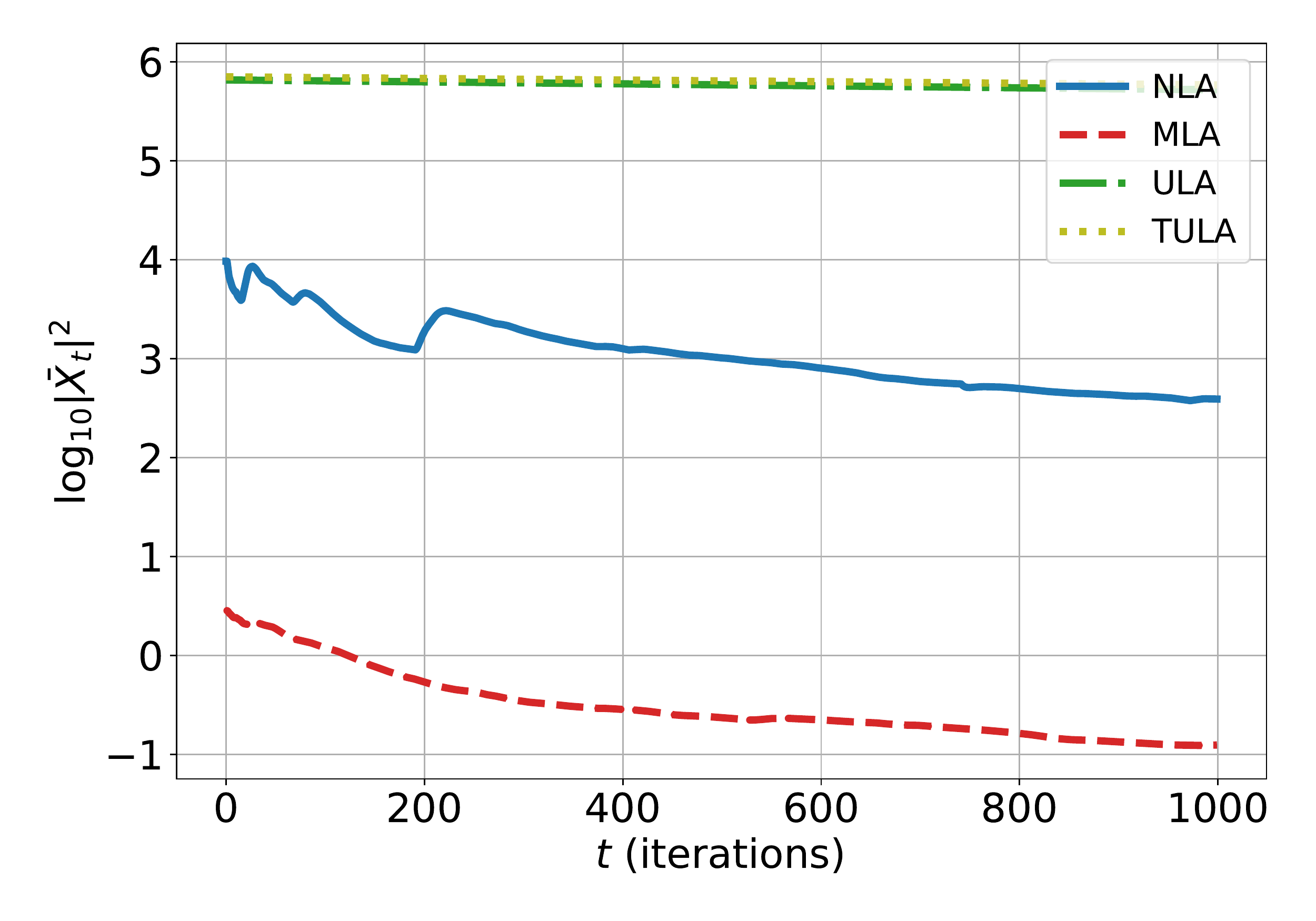}
    \includegraphics[width = 0.4\textwidth]{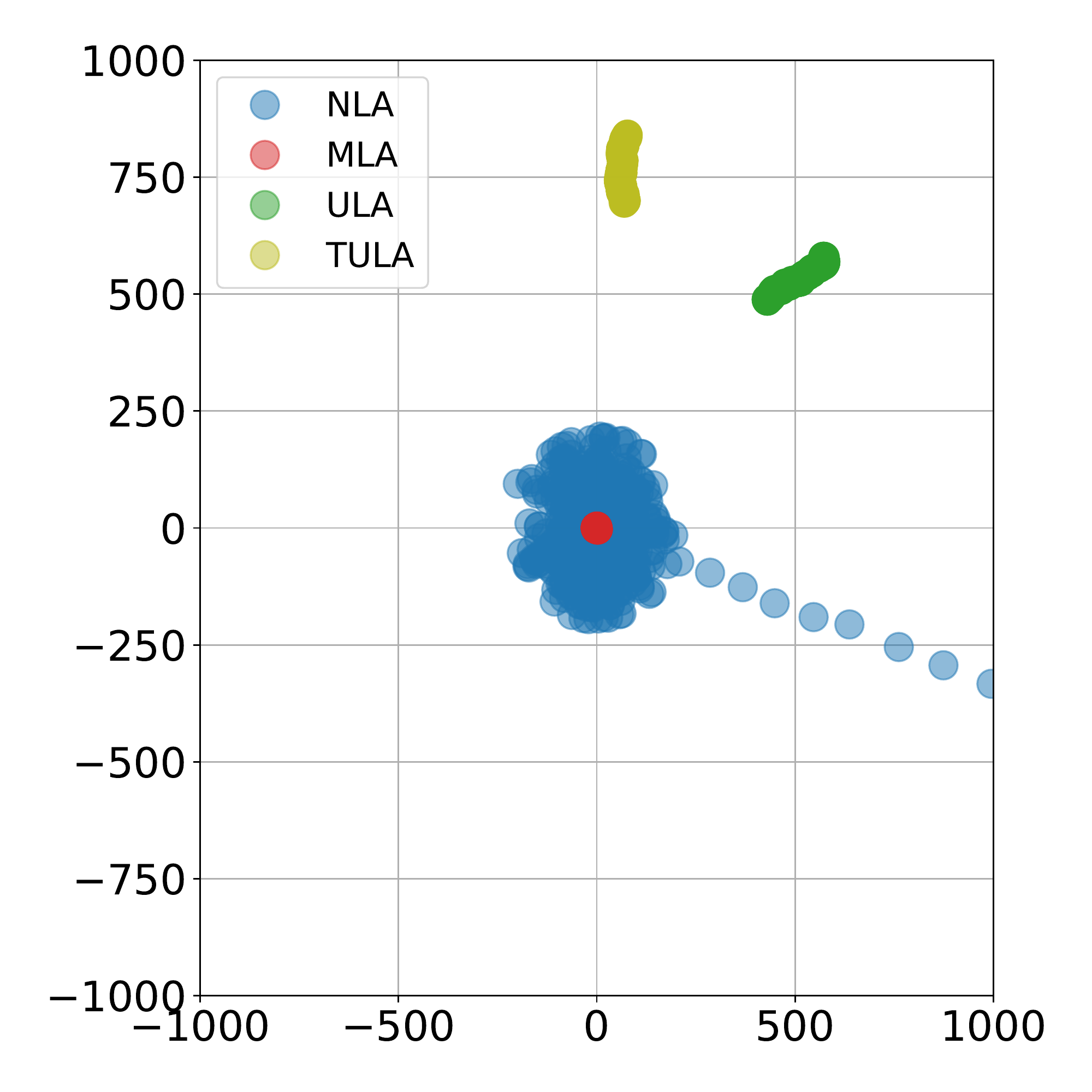}
    \end{center}
    \vspace{-.6cm}
    \caption{Second stage of the experiment. In this stage, we treat the $1000$ samples from the first stage of the experiment as burn-in and look at the performance of the next $1000$ samples. Left: We plot the logarithm of the norm of the running mean versus iteration. Right: We again display the corresponding samples.}
    \label{fig:lapstages}
\end{figure}


Recall that the Hessian of the potential $V$ is given in~\eqref{eq:potential_norm} while the potential of the mirror map $\phi$ is given by
\begin{align*}
    \nabla^2 \phi(x)
    = \frac{3}{2 \|x\|^{1/2}} \Bigl[ I_2 - \frac{3}{4} \Bigl( \frac{x}{\|x\|} \Bigr)\Bigl( \frac{x}{\|x\|} \Bigr)^\top\Bigr].
\end{align*}
From these expressions, it can be checked that Corollary~\ref{cor:potential_dominates_mirror} holds with $C \le 3/(4\sqrt{2\beta})$. On the other hand, the measure $\pi_\beta$ satisfies a Poincar\'e inequality~\eqref{eq:p} with constant $C_{\msf{P}} \le 1/(2\beta)$. Heuristically, we therefore expect the mixing time of ULA to scale as $O(\beta^{-1})$, and the mixing time of \ref{eq:mla} to scale as $O(\beta^{-1/2})$, which provides an explanation for the rates of convergence observed in Figure~\ref{fig:lapfull}. In comparison, the mixing time of \ref{eq:NLA} is scale-invariant, i.e.\ $O(1)$, as we demonstrated in Corollary~\ref{cor:conv_newton}, as witnessed by the initial rapid convergence in Figure~\ref{fig:lapfull}.

As mentioned in our open questions, this points to the intriguing possibility of developing more stable variants of \ref{eq:NLA}, which would mirror the development of such strategies for Newton's method~\cite{conn2000trust,nesterov2006cubic}.


\bibliographystyle{aomalpha}
\bibliography{ref}

\providecommand{\etalchar}[1]{$^{#1}$}
\providecommand{\bysame}{\leavevmode\hbox to3em{\hrulefill}\thinspace}
\providecommand{\noopsort}[1]{}
\providecommand{\mr}[1]{\href{http://www.ams.org/mathscinet-getitem?mr=#1}{MR~#1}}
\providecommand{\zbl}[1]{\href{http://www.zentralblatt-math.org/zmath/en/search/?q=an:#1}{Zbl~#1}}
\providecommand{\jfm}[1]{\href{http://www.emis.de/cgi-bin/JFM-item?#1}{JFM~#1}}
\providecommand{\arxiv}[1]{\href{http://www.arxiv.org/abs/#1}{arXiv~#1}}
\providecommand{\doi}[1]{\url{http://dx.doi.org/#1}}
\providecommand{\MR}{\relax\ifhmode\unskip\space\fi MR }
\providecommand{\MRhref}[2]{%
  \href{http://www.ams.org/mathscinet-getitem?mr=#1}{#2}
}
\providecommand{\href}[2]{#2}
\begin{thebibliography}{GPAM{\etalchar{+}}14}

\bibitem[AGB15]{alonso2015kls}
\bgroup\scshape{}D.~Alonso-Guti\'{e}rrez\egroup{} and
  \bgroup\scshape{}J.~Bastero\egroup{}, \emph{Approaching the
  {K}annan-{L}ov\'{a}sz-{S}imonovits and variance conjectures}, \emph{Lecture
  Notes in Mathematics} \textbf{2131}, Springer, Cham, 2015.

\bibitem[AWR17]{AltWeeRig17}
\bgroup\scshape{}J.~Altschuler\egroup{}, \bgroup\scshape{}J.~Weed\egroup{}, and
  \bgroup\scshape{}P.~Rigollet\egroup{}, Near-linear time approximation
  algorithms for optimal transport via {S}inkhorn iteration,  in \emph{Advances
  in Neural Information Processing Systems 30: Annual Conference on Neural
  Information Processing Systems 2017, 4-9 December 2017, Long Beach, CA,
  {USA}}, 2017, pp.~1961--1971.

\bibitem[AGS08]{ambrosio2008gradient}
\bgroup\scshape{}L.~Ambrosio\egroup{}, \bgroup\scshape{}N.~Gigli\egroup{}, and
  \bgroup\scshape{}G.~Savar{\'e}\egroup{}, \emph{Gradient flows: in metric
  spaces and in the space of probability measures}, Springer Science \&
  Business Media, 2008.

\bibitem[BCG08]{bakryRateConvergenceErgodic2008}
\bgroup\scshape{}D.~Bakry\egroup{}, \bgroup\scshape{}P.~Cattiaux\egroup{}, and
  \bgroup\scshape{}A.~Guillin\egroup{}, Rate of convergence for ergodic
  continuous {{Markov}} processes: {{Lyapunov}} versus {{Poincar\'e}},
  \textbf{254} (2008), 727--759.

\bibitem[BGL14]{bakry2014markov}
\bgroup\scshape{}D.~Bakry\egroup{}, \bgroup\scshape{}I.~Gentil\egroup{}, and
  \bgroup\scshape{}M.~Ledoux\egroup{}, \emph{Analysis and geometry of {M}arkov
  diffusion operators}, \emph{Grundlehren der Mathematischen Wissenschaften
  [Fundamental Principles of Mathematical Sciences]} \textbf{348}, Springer,
  Cham, 2014.

\bibitem[Ber18]{Ber18}
\bgroup\scshape{}E.~Bernton\egroup{}, {L}angevin {M}onte {C}arlo and {JKO}
  splitting,  in \emph{Proceedings of the 31st Conference On Learning Theory}
  (\bgroup\scshape{}S.~Bubeck\egroup{}, \bgroup\scshape{}V.~Perchet\egroup{},
  and \bgroup\scshape{}P.~Rigollet\egroup{}, eds.), \emph{Proceedings of
  Machine Learning Research} \textbf{75}, PMLR, 2018, pp.~1777--1798.

\bibitem[BL00]{bobkov2000prekopa}
\bgroup\scshape{}S.~G. Bobkov\egroup{} and \bgroup\scshape{}M.~Ledoux\egroup{},
  From {B}runn-{M}inkowski to {B}rascamp-{L}ieb and to logarithmic {S}obolev
  inequalities,  \emph{Geom. Funct. Anal.} \textbf{10} (2000), 1028--1052.

\bibitem[Bob99]{bobkovIsoperimetricAnalyticInequalities1999}
\bgroup\scshape{}S.~G. Bobkov\egroup{}, Isoperimetric and analytic inequalities
  for log-concave probability measures,  \textbf{27} (1999), 1903--1921.

\bibitem[BD15]{bobkovding2015renyitransport}
\bgroup\scshape{}S.~G. Bobkov\egroup{} and \bgroup\scshape{}Y.~Ding\egroup{},
  Optimal transport and {R}\'{e}nyi informational divergence,  \emph{Electron.
  Commun. Probab.} \textbf{20} (2015), no. 4, 12.

\bibitem[BGG12]{bolley2012convergence}
\bgroup\scshape{}F.~Bolley\egroup{}, \bgroup\scshape{}I.~Gentil\egroup{}, and
  \bgroup\scshape{}A.~Guillin\egroup{}, Convergence to equilibrium in
  {W}asserstein distance for {F}okker-{P}lanck equations,  \emph{Journal of
  Functional Analysis} \textbf{263} (2012), 2430--2457.

\bibitem[BV05]{bolleyWeightedCsiszarKullbackPinskerInequalities2005}
\bgroup\scshape{}F.~Bolley\egroup{} and \bgroup\scshape{}C.~Villani\egroup{},
  Weighted {{Csisz\'ar}}-{{Kullback}}-{{Pinsker}} inequalities and applications
  to transportation inequalities,  in \emph{Annales de La {{Facult\'e}} Des
  Sciences de {{Toulouse}}: {{Math\'ematiques}}}, \textbf{14}, 2005,
  pp.~331--352.

\bibitem[BL06]{borwein2006convex}
\bgroup\scshape{}J.~M. Borwein\egroup{} and \bgroup\scshape{}A.~S.
  Lewis\egroup{}, \emph{Convex analysis and nonlinear optimization}, second
  ed., \emph{CMS Books in Mathematics/Ouvrages de Math\'{e}matiques de la SMC}
  \textbf{3}, Springer, New York, 2006, Theory and examples.

\bibitem[BL76]{BraLie76}
\bgroup\scshape{}H.~J. Brascamp\egroup{} and \bgroup\scshape{}E.~H.
  Lieb\egroup{}, On extensions of the {B}runn-{M}inkowski and
  {P}r\'{e}kopa-{L}eindler theorems, including inequalities for log concave
  functions, and with an application to the diffusion equation,  \emph{J.
  Functional Analysis} \textbf{22} (1976), 366--389.

\bibitem[BDMS19]{brosse2019tamed}
\bgroup\scshape{}N.~Brosse\egroup{}, \bgroup\scshape{}A.~Durmus\egroup{},
  \bgroup\scshape{}{\'E}.~Moulines\egroup{}, and
  \bgroup\scshape{}S.~Sabanis\egroup{}, The tamed unadjusted {L}angevin
  algorithm,  \emph{Stochastic Processes and their Applications} \textbf{129}
  (2019), 3638--3663.

\bibitem[BCB12]{BubCes12}
\bgroup\scshape{}S.~Bubeck\egroup{} and
  \bgroup\scshape{}N.~Cesa-Bianchi\egroup{}, Regret analysis of stochastic and
  nonstochastic multi-armed bandit problems,  \emph{Foundations and Trends in
  Machine Learning} \textbf{5} (2012), 1--122.

\bibitem[BE15]{bubeck2015entropic}
\bgroup\scshape{}S.~Bubeck\egroup{} and \bgroup\scshape{}R.~Eldan\egroup{}, The
  entropic barrier: a simple and optimal universal self-concordant barrier,  in
  \emph{Conference on Learning Theory}, 2015, pp.~279--279.

\bibitem[BEL18]{bubeck2018sampling}
\bgroup\scshape{}S.~Bubeck\egroup{}, \bgroup\scshape{}R.~Eldan\egroup{}, and
  \bgroup\scshape{}J.~Lehec\egroup{}, Sampling from a log-concave distribution
  with projected {L}angevin {M}onte {C}arlo,  \emph{Discrete \& Computational
  Geometry} \textbf{59} (2018), 757--783.

\bibitem[Bub15]{bubeck2015convex}
\bgroup\scshape{}S.~Bubeck\egroup{}, Convex optimization: algorithms and
  complexity,  \emph{Foundations and Trends® in Machine Learning} \textbf{8}
  (2015), 231--357.

\bibitem[CLL19]{cao2019renyi}
\bgroup\scshape{}Y.~Cao\egroup{}, \bgroup\scshape{}J.~Lu\egroup{}, and
  \bgroup\scshape{}Y.~Lu\egroup{}, Exponential decay of {R}\'{e}nyi divergence
  under {F}okker-{P}lanck equations,  \emph{J. Stat. Phys.} \textbf{176}
  (2019), 1172--1184.

\bibitem[CG09]{cattiaux2009trendtv}
\bgroup\scshape{}P.~Cattiaux\egroup{} and \bgroup\scshape{}A.~Guillin\egroup{},
  Trends to equilibrium in total variation distance,  \emph{Ann. Inst. Henri
  Poincar\'{e} Probab. Stat.} \textbf{45} (2009), 117--145.

\bibitem[CL89]{chen1989coupling}
\bgroup\scshape{}M.-F. Chen\egroup{} and \bgroup\scshape{}S.-F. Li\egroup{},
  Coupling methods for multidimensional diffusion processes,  \emph{The Annals
  of Probability} (1989), 151--177.

\bibitem[CB18]{cheng2018langevin}
\bgroup\scshape{}X.~Cheng\egroup{} and \bgroup\scshape{}P.~Bartlett\egroup{},
  Convergence of {L}angevin {MCMC} in {KL}-divergence,  in \emph{Algorithmic
  Learning Theory 2018}, \emph{Proc. Mach. Learn. Res. (PMLR)} \textbf{83},
  Proceedings of Machine Learning Research PMLR, 2018, p.~26.

\bibitem[CCBJ18]{cheng2017underdamped}
\bgroup\scshape{}X.~Cheng\egroup{}, \bgroup\scshape{}N.~S. Chatterji\egroup{},
  \bgroup\scshape{}P.~L. Bartlett\egroup{}, and \bgroup\scshape{}M.~I.
  Jordan\egroup{}, Underdamped {L}angevin {MCMC}: A non-asymptotic analysis,
  in \emph{Proceedings of the 31st Conference On Learning Theory}
  (\bgroup\scshape{}S.~Bubeck\egroup{}, \bgroup\scshape{}V.~Perchet\egroup{},
  and \bgroup\scshape{}P.~Rigollet\egroup{}, eds.), \emph{Proceedings of
  Machine Learning Research} \textbf{75}, PMLR, 06--09 Jul 2018, pp.~300--323.

\bibitem[CYBJ19]{cheng2019quantitative}
\bgroup\scshape{}X.~{Cheng}\egroup{}, \bgroup\scshape{}D.~{Yin}\egroup{},
  \bgroup\scshape{}P.~L. {Bartlett}\egroup{}, and \bgroup\scshape{}M.~I.
  {Jordan}\egroup{}, {Quantitative $W_1$ convergence of {L}angevin-like
  stochastic processes with non-convex potential and state-dependent noise},
  \emph{arXiv e-prints} (2019).

\bibitem[CMRS20]{CheMauRig20}
\bgroup\scshape{}S.~{Chewi}\egroup{}, \bgroup\scshape{}T.~{Maunu}\egroup{},
  \bgroup\scshape{}P.~{Rigollet}\egroup{}, and \bgroup\scshape{}A.~J.
  {Stromme}\egroup{}, {Gradient descent algorithms for {B}ures-{W}asserstein
  barycenters},  \emph{arXiv e-prints} (2020).

\bibitem[CGT00]{conn2000trust}
\bgroup\scshape{}A.~R. Conn\egroup{}, \bgroup\scshape{}N.~I. Gould\egroup{},
  and \bgroup\scshape{}P.~L. Toint\egroup{}, \emph{Trust region methods},
  \textbf{1}, SIAM, 2000.

\bibitem[CE17]{cordero2017transport}
\bgroup\scshape{}D.~Cordero-Erausquin\egroup{}, Transport inequalities for
  log-concave measures, quantitative forms, and applications,  \emph{Canad. J.
  Math.} \textbf{69} (2017), 481--501.

\bibitem[Cut13]{cuturi2013sinkhorn}
\bgroup\scshape{}M.~Cuturi\egroup{}, Sinkhorn distances: lightspeed computation
  of optimal transport,  in \emph{Advances in Neural Information Processing
  Systems 26} (\bgroup\scshape{}C.~J.~C. Burges\egroup{},
  \bgroup\scshape{}L.~Bottou\egroup{}, \bgroup\scshape{}M.~Welling\egroup{},
  \bgroup\scshape{}Z.~Ghahramani\egroup{}, and \bgroup\scshape{}K.~Q.
  Weinberger\egroup{}, eds.), Curran Associates, Inc., 2013, pp.~2292--2300.

\bibitem[Dal17a]{Dal17}
\bgroup\scshape{}A.~Dalalyan\egroup{}, Further and stronger analogy between
  sampling and optimization: {L}angevin {M}onte {C}arlo and gradient descent,
  in \emph{Proceedings of the 2017 Conference on Learning Theory}
  (\bgroup\scshape{}S.~Kale\egroup{} and \bgroup\scshape{}O.~Shamir\egroup{},
  eds.), \emph{Proceedings of Machine Learning Research} \textbf{65}, PMLR,
  Amsterdam, Netherlands, 2017, pp.~678--689.

\bibitem[Dal17b]{dalalyan2017theoretical}
\bgroup\scshape{}A.~S. Dalalyan\egroup{}, Theoretical guarantees for
  approximate sampling from smooth and log-concave densities,  \emph{Journal of
  the Royal Statistical Society: Series B (Statistical Methodology)}
  \textbf{79} (2017), 651--676.

\bibitem[DK19]{DalKar17}
\bgroup\scshape{}A.~S. Dalalyan\egroup{} and
  \bgroup\scshape{}A.~Karagulyan\egroup{}, User-friendly guarantees for the
  {L}angevin {M}onte {C}arlo with inaccurate gradient,  \emph{Stoch. Proc.
  Appl.} \textbf{129} (2019), 5278--5311.

\bibitem[DRD20]{DalRiou2020}
\bgroup\scshape{}A.~S. Dalalyan\egroup{} and
  \bgroup\scshape{}L.~Riou-Durand\egroup{}, On sampling from a log-concave
  density using kinetic {L}angevin diffusions,  \emph{Bernoulli} \textbf{26}
  (2020), 1956--1988.

\bibitem[DRK19]{dalalyan2019bounding}
\bgroup\scshape{}A.~S. {Dalalyan}\egroup{},
  \bgroup\scshape{}L.~{Riou-Durand}\egroup{}, and
  \bgroup\scshape{}A.~{Karagulyan}\egroup{}, {Bounding the error of discretized
  Langevin algorithms for non-strongly log-concave targets},  \emph{arXiv
  e-prints} (2019).

\bibitem[Din14]{ding2014divergencetransport}
\bgroup\scshape{}Y.~Ding\egroup{}, Wasserstein-divergence transportation
  inequalities and polynomial concentration inequalities,  \emph{Statist.
  Probab. Lett.} \textbf{94} (2014), 77--85.

\bibitem[Din15]{ding2015quadratictransport}
\bgroup\scshape{}Y.~Ding\egroup{}, A note on quadratic transportation and
  divergence inequality,  \emph{Statist. Probab. Lett.} \textbf{100} (2015),
  115--123.

\bibitem[DMM19]{durmus2019lmcconvex}
\bgroup\scshape{}A.~Durmus\egroup{}, \bgroup\scshape{}S.~Majewski\egroup{}, and
  \bgroup\scshape{}B.~a. Miasojedow\egroup{}, Analysis of {L}angevin {M}onte
  {C}arlo via convex optimization,  \emph{J. Mach. Learn. Res.} \textbf{20}
  (2019), Paper No. 73, 46.

\bibitem[DM15]{DurMou15}
\bgroup\scshape{}A.~Durmus\egroup{} and \bgroup\scshape{}E.~Moulines\egroup{},
  Quantitative bounds of convergence for geometrically ergodic {M}arkov chain
  in the {W}asserstein distance with application to the {M}etropolis adjusted
  {L}angevin algorithm,  \emph{Statistics and Computing} \textbf{25} (2015),
  5–19.

\bibitem[DM17]{durmus2017nonasymptoticlangevin}
\bgroup\scshape{}A.~Durmus\egroup{} and \bgroup\scshape{}E.~Moulines\egroup{},
  Nonasymptotic convergence analysis for the unadjusted {L}angevin algorithm,
  \emph{Ann. Appl. Probab.} \textbf{27} (2017), 1551--1587.

\bibitem[DM{\etalchar{+}}19]{durmus2019high}
\bgroup\scshape{}A.~Durmus\egroup{}, \bgroup\scshape{}{\'E}.~Moulines\egroup{},
  and \bgroup\scshape{}others\egroup{}, High-dimensional {B}ayesian inference
  via the unadjusted {L}angevin algorithm,  \emph{Bernoulli} \textbf{25}
  (2019), 2854--2882.

\bibitem[DCWY19]{dwivedi2019log}
\bgroup\scshape{}R.~Dwivedi\egroup{}, \bgroup\scshape{}Y.~Chen\egroup{},
  \bgroup\scshape{}M.~J. Wainwright\egroup{}, and
  \bgroup\scshape{}B.~Yu\egroup{}, Log-concave sampling:
  {M}etropolis-{H}astings algorithms are fast,  \emph{Journal of Machine
  Learning Research} \textbf{20} (2019), 1--42.

\bibitem[Ebe16]{eberle2016reflection}
\bgroup\scshape{}A.~Eberle\egroup{}, Reflection couplings and contraction rates
  for diffusions,  \emph{Probability theory and related fields} \textbf{166}
  (2016), 851--886.

\bibitem[FKP94]{FriKanPol94}
\bgroup\scshape{}A.~Frieze\egroup{}, \bgroup\scshape{}R.~Kannan\egroup{}, and
  \bgroup\scshape{}N.~Polson\egroup{}, Sampling from log-concave distributions,
   \emph{The Annals of Applied Probability} \textbf{4} (1994), 812--837.

\bibitem[Gen08]{gentil2008prekopa}
\bgroup\scshape{}I.~Gentil\egroup{}, From the {P}r\'{e}kopa-{L}eindler
  inequality to modified logarithmic {S}obolev inequality,  \emph{Ann. Fac.
  Sci. Toulouse Math. (6)} \textbf{17} (2008), 291--308.

\bibitem[GPAM{\etalchar{+}}14]{Gans14}
\bgroup\scshape{}I.~Goodfellow\egroup{},
  \bgroup\scshape{}J.~Pouget-Abadie\egroup{},
  \bgroup\scshape{}M.~Mirza\egroup{}, \bgroup\scshape{}B.~Xu\egroup{},
  \bgroup\scshape{}D.~Warde-Farley\egroup{},
  \bgroup\scshape{}S.~Ozair\egroup{}, \bgroup\scshape{}A.~Courville\egroup{},
  and \bgroup\scshape{}Y.~Bengio\egroup{}, Generative adversarial nets,  in
  \emph{Advances in Neural Information Processing Systems 27}
  (\bgroup\scshape{}Z.~Ghahramani\egroup{},
  \bgroup\scshape{}M.~Welling\egroup{}, \bgroup\scshape{}C.~Cortes\egroup{},
  \bgroup\scshape{}N.~D. Lawrence\egroup{}, and \bgroup\scshape{}K.~Q.
  Weinberger\egroup{}, eds.), 2014, pp.~2672--2680.

\bibitem[vH16]{vanhandelProbabilityHighDimension2014}
\bgroup\scshape{}R.~van Handel\egroup{}, Probability in high dimension, 2016,
  Lecture Notes (Princeton University).

\bibitem[HKRC18]{hsieh2018mirrored}
\bgroup\scshape{}Y.-P. Hsieh\egroup{}, \bgroup\scshape{}A.~Kavis\egroup{},
  \bgroup\scshape{}P.~Rolland\egroup{}, and
  \bgroup\scshape{}V.~Cevher\egroup{}, Mirrored {L}angevin dynamics,  in
  \emph{Advances in Neural Information Processing Systems}, 2018,
  pp.~2878--2887.

\bibitem[JKO98]{jordan1998variational}
\bgroup\scshape{}R.~Jordan\egroup{}, \bgroup\scshape{}D.~Kinderlehrer\egroup{},
  and \bgroup\scshape{}F.~Otto\egroup{}, The variational formulation of the
  {F}okker-{P}lanck equation,  \emph{SIAM Journal on Mathematical Analysis}
  \textbf{29} (1998), 1--17.

\bibitem[KLS95]{kls1995}
\bgroup\scshape{}R.~Kannan\egroup{}, \bgroup\scshape{}L.~Lov\'{a}sz\egroup{},
  and \bgroup\scshape{}M.~Simonovits\egroup{}, Isoperimetric problems for
  convex bodies and a localization lemma,  \emph{Discrete Comput. Geom.}
  \textbf{13} (1995), 541--559.

\bibitem[KS91]{karatzasBrownianMotion1998}
\bgroup\scshape{}I.~Karatzas\egroup{} and \bgroup\scshape{}S.~E.
  Shreve\egroup{}, \emph{Brownian motion and stochastic calculus}, second ed.,
  \emph{Graduate Texts in Mathematics} \textbf{113}, Springer-Verlag, New York,
  1991.

\bibitem[KNS16]{karimi2016linear}
\bgroup\scshape{}H.~Karimi\egroup{}, \bgroup\scshape{}J.~Nutini\egroup{}, and
  \bgroup\scshape{}M.~Schmidt\egroup{}, Linear convergence of gradient and
  proximal-gradient methods under the {P}olyak-{L}ojasiewicz condition,  in
  \emph{Joint European Conference on Machine Learning and Knowledge Discovery
  in Databases}, Springer, 2016, pp.~795--811.

\bibitem[LTV20]{LadTatVem20}
\bgroup\scshape{}A.~{Laddha}\egroup{}, \bgroup\scshape{}Y.~{Tat Lee}\egroup{},
  and \bgroup\scshape{}S.~{Vempala}\egroup{}, {Strong self-concordance and
  sampling},  \emph{STOC} (2020).

\bibitem[LBL08]{lebris2008fokkerplanck}
\bgroup\scshape{}C.~Le~Bris\egroup{} and \bgroup\scshape{}P.-L. Lions\egroup{},
  Existence and uniqueness of solutions to {F}okker-{P}lanck type equations
  with irregular coefficients,  \emph{Comm. Partial Differential Equations}
  \textbf{33} (2008), 1272--1317.

\bibitem[Led18]{ledoux2018remarks}
\bgroup\scshape{}M.~Ledoux\egroup{}, Remarks on some transportation cost
  inequalities, 2018.

\bibitem[LV17]{leevempala2017poincare}
\bgroup\scshape{}Y.~T. Lee\egroup{} and \bgroup\scshape{}S.~S.
  Vempala\egroup{}, Eldan's stochastic localization and the {KLS} hyperplane
  conjecture: an improved lower bound for expansion,  in \emph{58th {A}nnual
  {IEEE} {S}ymposium on {F}oundations of {C}omputer {S}cience---{FOCS} 2017},
  IEEE Computer Soc., Los Alamitos, CA, 2017, pp.~998--1007.

\bibitem[LV18]{leevempala2018lsi}
\bgroup\scshape{}Y.~T. Lee\egroup{} and \bgroup\scshape{}S.~S.
  Vempala\egroup{}, Stochastic localization + {S}tieltjes barrier = tight bound
  for log-{S}obolev,  in \emph{S{TOC}'18---{P}roceedings of the 50th {A}nnual
  {ACM} {SIGACT} {S}ymposium on {T}heory of {C}omputing}, ACM, New York, 2018,
  pp.~1122--1129.

\bibitem[Liu20]{liu2020quadratictransport}
\bgroup\scshape{}Y.~Liu\egroup{}, The {P}oincar\'{e} inequality and quadratic
  transportation-variance inequalities,  \emph{Electron. J. Probab.}
  \textbf{25} (2020), Paper No. 1, 16.

\bibitem[Loj63]{lojasiewicz1963propriete}
\bgroup\scshape{}S.~Lojasiewicz\egroup{}, Une propri{\'e}t{\'e} topologique des
  sous-ensembles analytiques r{\'e}els,  \emph{Les {\'e}quations aux
  d{\'e}riv{\'e}es partielles} \textbf{117} (1963), 87--89.

\bibitem[LV07]{LovVem07}
\bgroup\scshape{}L.~Lov\'asz\egroup{} and \bgroup\scshape{}S.~Vempala\egroup{},
  The geometry of logconcave functions and sampling algorithms,  \emph{Random
  Structures \& Algorithms} \textbf{30} (2007), 307--358.

\bibitem[MCC{\etalchar{+}}19]{ma2019there}
\bgroup\scshape{}Y.-A. {Ma}\egroup{}, \bgroup\scshape{}N.~{Chatterji}\egroup{},
  \bgroup\scshape{}X.~{Cheng}\egroup{},
  \bgroup\scshape{}N.~{Flammarion}\egroup{},
  \bgroup\scshape{}P.~{Bartlett}\egroup{}, and \bgroup\scshape{}M.~I.
  {Jordan}\egroup{}, {Is there an analog of {N}esterov acceleration for
  {MCMC}?},  \emph{arXiv e-prints} (2019).

\bibitem[MWBG12]{martin2012newtonmcmc}
\bgroup\scshape{}J.~Martin\egroup{}, \bgroup\scshape{}L.~C. Wilcox\egroup{},
  \bgroup\scshape{}C.~Burstedde\egroup{}, and
  \bgroup\scshape{}O.~Ghattas\egroup{}, A stochastic {N}ewton {MCMC} method for
  large-scale statistical inverse problems with application to seismic
  inversion,  \emph{SIAM J. Sci. Comput.} \textbf{34} (2012), A1460--A1487.

\bibitem[MT09]{MeyTwe09}
\bgroup\scshape{}S.~Meyn\egroup{} and \bgroup\scshape{}R.~L. Tweedie\egroup{},
  \emph{Markov chains and stochastic stability}, 2nd ed., Cambridge University
  Press, USA, 2009.

\bibitem[MFWB19]{mou2019improved}
\bgroup\scshape{}W.~{Mou}\egroup{}, \bgroup\scshape{}N.~{Flammarion}\egroup{},
  \bgroup\scshape{}M.~J. {Wainwright}\egroup{}, and \bgroup\scshape{}P.~L.
  {Bartlett}\egroup{}, {Improved bounds for discretization of {L}angevin
  diffusions: Near-optimal rates without convexity},  \emph{arXiv e-prints}
  (2019).

\bibitem[Nea12]{neal2012mcmc}
\bgroup\scshape{}R.~Neal\egroup{}, {MCMC} using {H}amiltonian dynamics,
  \emph{Handbook of Markov Chain Monte Carlo} (2012).

\bibitem[NJ79]{nemirovskii1979problemcomplexity}
\bgroup\scshape{}A.~S. Nemirovskii\egroup{} and \bgroup\scshape{}D.~B.
  Judin\egroup{}, Complexity of problems and efficiency of optimization
  methods, 1979, p.~384.

\bibitem[Nes04]{nesterov2004optimization}
\bgroup\scshape{}Y.~Nesterov\egroup{}, \emph{Introductory lectures on convex
  optimization}, \emph{Applied Optimization} \textbf{87}, Kluwer Academic
  Publishers, Boston, MA, 2004, A basic course.

\bibitem[NN94]{nesterov1995interiorpoint}
\bgroup\scshape{}Y.~Nesterov\egroup{} and
  \bgroup\scshape{}A.~Nemirovskii\egroup{}, \emph{Interior-point polynomial
  algorithms in convex programming}, \emph{SIAM Studies in Applied Mathematics}
  \textbf{13}, Society for Industrial and Applied Mathematics (SIAM),
  Philadelphia, PA, 1994.

\bibitem[NP06]{nesterov2006cubic}
\bgroup\scshape{}Y.~Nesterov\egroup{} and \bgroup\scshape{}B.~T.
  Polyak\egroup{}, Cubic regularization of {N}ewton method and its global
  performance,  \emph{Mathematical Programming} \textbf{108} (2006), 177--205.

\bibitem[OT11]{ohta2011generalizedentropies}
\bgroup\scshape{}S.-i. Ohta\egroup{} and \bgroup\scshape{}A.~Takatsu\egroup{},
  Displacement convexity of generalized relative entropies,  \emph{Adv. Math.}
  \textbf{228} (2011), 1742--1787.

\bibitem[OT13]{ohta2013generalizedentropiesii}
\bgroup\scshape{}S.-i. Ohta\egroup{} and \bgroup\scshape{}A.~Takatsu\egroup{},
  Displacement convexity of generalized relative entropies. {II},  \emph{Comm.
  Anal. Geom.} \textbf{21} (2013), 687--785.

\bibitem[OV00]{otto2000generalization}
\bgroup\scshape{}F.~Otto\egroup{} and \bgroup\scshape{}C.~Villani\egroup{},
  Generalization of an inequality by {T}alagrand and links with the logarithmic
  {S}obolev inequality,  \emph{Journal of Functional Analysis} \textbf{173}
  (2000), 361--400.

\bibitem[RRT17]{raginsky2017sgld}
\bgroup\scshape{}M.~Raginsky\egroup{}, \bgroup\scshape{}A.~Rakhlin\egroup{},
  and \bgroup\scshape{}M.~Telgarsky\egroup{}, Non-convex learning via
  stochastic gradient {L}angevin dynamics: a nonasymptotic analysis,  in
  \emph{Proceedings of the 2017 Conference on Learning Theory}
  (\bgroup\scshape{}S.~Kale\egroup{} and \bgroup\scshape{}O.~Shamir\egroup{},
  eds.), \emph{Proceedings of Machine Learning Research} \textbf{65}, PMLR,
  Amsterdam, Netherlands, 07--10 Jul 2017, pp.~1674--1703.

\bibitem[RC04]{RobCas04}
\bgroup\scshape{}C.~Robert\egroup{} and \bgroup\scshape{}G.~Casella\egroup{},
  \emph{{Monte Carlo statistical methods}}, Springer Verlag, 2004.

\bibitem[Roc97]{rockafellar1997convex}
\bgroup\scshape{}R.~T. Rockafellar\egroup{}, \emph{Convex analysis},
  \emph{Princeton Landmarks in Mathematics}, Princeton University Press,
  Princeton, NJ, 1997, Reprint of the 1970 original, Princeton Paperbacks.

\bibitem[SKL20]{salim2020proximal}
\bgroup\scshape{}A.~{Salim}\egroup{}, \bgroup\scshape{}A.~{Korba}\egroup{}, and
  \bgroup\scshape{}G.~{Luise}\egroup{}, {Wasserstein proximal gradient},
  \emph{arXiv e-prints} (2020).

\bibitem[San17]{santambrogio2017euclidean}
\bgroup\scshape{}F.~Santambrogio\egroup{}, $\{$Euclidean, metric, and
  Wasserstein$\}$ gradient flows: an overview,  \emph{Bulletin of Mathematical
  Sciences} \textbf{7} (2017), 87--154.

\bibitem[SBCR16]{simsekli2016quasinewtonlangevin}
\bgroup\scshape{}U.~Simsekli\egroup{}, \bgroup\scshape{}R.~Badeau\egroup{},
  \bgroup\scshape{}A.~T. Cemgil\egroup{}, and
  \bgroup\scshape{}G.~Richard\egroup{}, Stochastic quasi-{N}ewton {L}angevin
  {M}onte {C}arlo,  in \emph{Proceedings of the 33rd International Conference
  on International Conference on Machine Learning - Volume 48},
  \emph{ICML’16}, JMLR.org, 2016, p.~642–651.

\bibitem[TY18]{yintatlee2018universalbarrier}
\bgroup\scshape{}Y.~{Tat Lee}\egroup{} and \bgroup\scshape{}M.-C.
  {Yue}\egroup{}, {Universal barrier is $n$-self-concordant},  \emph{arXiv
  e-prints} (2018).

\bibitem[Tsy09]{tsybakov2009nonparametric}
\bgroup\scshape{}A.~B. Tsybakov\egroup{}, \emph{Introduction to nonparametric
  estimation}, \emph{Springer Series in Statistics}, Springer, New York, 2009,
  Revised and extended from the 2004 French original, Translated by Vladimir
  Zaiats.

\bibitem[VW19]{vempala2019langevin}
\bgroup\scshape{}S.~Vempala\egroup{} and \bgroup\scshape{}A.~Wibisono\egroup{},
  Rapid convergence of the unadjusted {L}angevin algorithm: isoperimetry
  suffices,  in \emph{Advances in Neural Information Processing Systems 32}
  (\bgroup\scshape{}H.~Wallach\egroup{},
  \bgroup\scshape{}H.~Larochelle\egroup{},
  \bgroup\scshape{}A.~Beygelzimer\egroup{},
  \bgroup\scshape{}F.~d\textquotesingle Alch\'{e}-Buc\egroup{},
  \bgroup\scshape{}E.~Fox\egroup{}, and \bgroup\scshape{}R.~Garnett\egroup{},
  eds.), Curran Associates, Inc., 2019, pp.~8094--8106.

\bibitem[Vil03]{villani2003topics}
\bgroup\scshape{}C.~Villani\egroup{}, \emph{Topics in optimal transportation},
  \emph{Graduate Studies in Mathematics} \textbf{58}, American Mathematical
  Society, Providence, RI, 2003.

\bibitem[Vil09]{villani2009ot}
\bgroup\scshape{}C.~Villani\egroup{}, \emph{Optimal transport},
  \emph{Grundlehren der Mathematischen Wissenschaften [Fundamental Principles
  of Mathematical Sciences]} \textbf{338}, Springer-Verlag, Berlin, 2009, Old
  and new.

\bibitem[WL20]{wang2020informationnewton}
\bgroup\scshape{}Y.~{Wang}\egroup{} and \bgroup\scshape{}W.~{Li}\egroup{},
  {Information Newton's flow: second-order optimization method in probability
  space},  \emph{arXiv e-prints} (2020).

\bibitem[Wib18]{Wib18}
\bgroup\scshape{}A.~Wibisono\egroup{}, Sampling as optimization in the space of
  measures: The {L}angevin dynamics as a composite optimization problem,  in
  \emph{Conference on Learning Theory, {COLT} 2018, Stockholm, Sweden, 6-9 July
  2018} (\bgroup\scshape{}S.~Bubeck\egroup{},
  \bgroup\scshape{}V.~Perchet\egroup{}, and
  \bgroup\scshape{}P.~Rigollet\egroup{}, eds.), \emph{Proceedings of Machine
  Learning Research} \textbf{75}, {PMLR}, 2018, pp.~2093--3027.

\bibitem[{Wib}19]{Wib19prox}
\bgroup\scshape{}A.~{Wibisono}\egroup{}, {Proximal {L}angevin algorithm: rapid
  convergence under isoperimetry},  \emph{arXiv e-prints} (2019).

\bibitem[ZPFP20]{zhang2020mirror}
\bgroup\scshape{}K.~S. {Zhang}\egroup{},
  \bgroup\scshape{}G.~{Peyr{\'e}}\egroup{},
  \bgroup\scshape{}J.~{Fadili}\egroup{}, and
  \bgroup\scshape{}M.~{Pereyra}\egroup{}, {Wasserstein control of mirror
  {L}angevin {M}onte {C}arlo},  \emph{arXiv e-prints} (2020).

\bibitem[ZWG13]{zhang2013multivariate}
\bgroup\scshape{}T.~Zhang\egroup{}, \bgroup\scshape{}A.~Wiesel\egroup{}, and
  \bgroup\scshape{}M.~S. Greco\egroup{}, Multivariate generalized {G}aussian
  distribution: convexity and graphical models,  \emph{IEEE Transactions on
  Signal Processing} \textbf{61} (2013), 4141--4148.

\end{thebibliography}

\end{document}